\newcommand{\subtitle}[1]{%
  \posttitle{%
    \par\end{center}
    \begin{center}\large#1\end{center}
    \vskip0.5em}%
}
\theoremstyle{plain}
\newtheorem{thm}{Theorem}[section]
\newtheorem{lem}[thm]{Lemma}
\newtheorem{prop}[thm]{Proposition}
\newtheorem{cor}[thm]{Corollary}
\theoremstyle{definition}
\newtheorem{rem}[thm]{Remark}
\newtheorem{ass}[thm]{Assumption}
\newtheorem{defi}[thm]{Definition}
\newtheorem{alg}[thm]{Algorithm}
\renewcommand{\P}{{\mathbb P}}
\newcommand{\expect}{\operatorname{\mathbb{E}}}
\DeclareMathOperator{\Var}{\mathbb{V}ar}
\newcommand{\dint}{\,\mathup{d}}
\newcommand{\Finput}{\mathcal{F}}
\newcommand{\Yinput}{\mathcal{Y}}
\newcommand{\Barrel}{\mathcal{B}}
\newcommand{\ind}{\mathds{1}}
\DeclareMathOperator{\median}{med}
\newcommand{\wor}{\textup{wor}}
\newcommand{\fix}{\textup{fix}}
\DeclareMathOperator{\Int}{INT}
\newcommand{\eps}{\varepsilon}
\renewcommand{\rho}{\varrho}
\newcommand{\veci}{\mathbf{i}}
\newcommand{\vecp}{\mathbf{p}}
\newcommand{\vecx}{\mathbf{x}}
\newcommand{\vecz}{\mathbf{z}}
\newcommand{\vecX}{\mathbf{X}}
\newcommand{\Q}{{\mathbb Q}}
\newcommand{\R}{{\mathbb R}}
\newcommand{\N}{{\mathbb N}}
\newcommand{\D}{{\mathbb D}}
\DeclareMathAlphabet{\mathup}{OT1}{\familydefault}{m}{n}
\newcommand{\wt}{\widetilde}
\newcommand{\widebar}[1]{\mbox{\kern1.5pt\hbox{\vbox{\hrule height 0.6pt \kern0.35ex
        \hbox{\kern-0.15em \ensuremath{#1 }\kern0.0em}}}}\kern-0.1pt}
\newcommand{\fracts}[2]{{\textstyle\frac{#1}{#2}}}
\newlength{\fixboxwidth}
\definecolor{darkgreen}{rgb}{0,0.5,0}
\definecolor{orange}{rgb}{1,0.4,0}
\newcommand{\checked}[1]{{\color{black} #1}}
\title{Solvable Integration Problems\\
and Optimal Sample Size Selection}
\date{\today}
\author{Robert J.\ Kunsch\thanks{Institut f\"ur Mathematik, 
Universit\"at Osnabr\"uck, Albrechtstr. 28a, 49076 Osnabr\"uck, Germany, 
Email: robert.kunsch@uni-osnabrueck.de},\; 
Erich Novak\thanks{Mathematisches Institut, Universit\"at Jena,
Ernst-Abbe-Platz 2, 07743 Jena, Germany,
Email: erich.novak@uni-jena.de},\;  
and Daniel Rudolf\thanks{Institute for Mathematical Stochastics, Universit\"at G\"ottingen 
\& Felix-Bernstein-Institute for Mathematical Statistics, 
Goldschmidtstra\ss e 7, 
37077 G\"ottingen, Germany, 
Email: daniel.rudolf@uni-goettingen.de}
}
\begin{document}
\maketitle

\begin{abstract}  
We compute the integral of a function
or the expectation of a random variable with minimal cost
and use,
for \checked{our new} algorithm and for upper bounds of the complexity, 
i.i.d.\ samples.
Under certain assumptions it is possible to select a sample size
\checked{based on} a variance estimation,
or -- more generally -- 
\checked{based on} an estimation of a (central absolute) \mbox{$p$-moment}. 
That way one can guarantee a small absolute error with high probability,
the problem is thus called solvable.
The expected cost of the method
depends on the $p$-moment of the random variable,
which can be arbitrarily large.

\checked{In order to prove the 
% Daniel: Wieso ``almost''?
%(almost) 
optimality of our algorithm we also 
provide lower bounds.} 
These bounds apply not only to methods based on i.i.d.\ samples but 
also to general randomized algorithms. 
They show that -- up to constants -- the cost of the algorithm
is optimal in terms of accuracy, confidence level,
and norm of the particular input random variable.
Since the considered classes of random variables or integrands
are very large,
the worst case cost would be infinite. 
Nevertheless one can define adaptive stopping rules such that 
%the average cost for every input is finite.
for each input the expected cost is finite.

We contrast these positive results with examples of integration problems
that are not solvable.
\end{abstract}
 
\section{Introduction} 

We want to compute the integral
\begin{equation}   \label{eq:INT} 
  \Int (f) = \int_G f(\vecx) \dint \pi(\vecx) 
\end{equation} 
of a $\pi$-integrable function $f: G \to \R$ %,
where $\pi$ is a probability measure on the domain~$G$,
or the expectation
\begin{equation}   \label{eq:E(Y)} 
  \expect Y = \int_\Omega Y(\omega) \dint \P(\omega) 
\end{equation} 
of a random variable $Y$ %,
mapping from a probability space $(\Omega,\Sigma,\P)$ to $\R$.
We allow randomized algorithms
and want to achieve this goal
up to some absolute error tolerance $\eps > 0$,
based on finitely many function values or i.i.d.\ samples, respectively.

Ideally, we would like to do it for an arbitrary 
$L_1$-function $f$ or an arbitrary integrable 
random variable $Y$. 
But there is no algorithm for this task, 
even if we assume %a
$C^\infty$-smoothness of the integrand, see Theorem~\ref{thm:Cinf}. 
Hence, we have to shrink the set of possible inputs. 
In this paper we define solvable integration problems in a very 
natural way: $\Int$ is \emph{solvable} for a class~$\Finput$ 
of integrands if, for every $\eps,\delta >0$, there is an algorithm~$A$
such that 
\begin{equation}  \label{eq: eps_delta_approx}
 \P\{|A(f) - \Int(f)| \leq \eps\} \geq1- \delta,
\end{equation}
for all $f\in\Finput$.
% \en{The cost of $A$, and the number of used function evaluations, 
% may depend on $f$ but is finite for every $f$.} 
\checked{The cost of $A$, that is, the number of used function evaluations
may depend on $f$ but is almost surely finite.}
If \eqref{eq: eps_delta_approx} holds for $\eps,\delta >0$,
then the method~$A$ 
is called \emph{$(\eps,\delta)$-approximating}.
Analogously we define solvability for~$\expect$
and classes~$\Yinput$ of random variables. 
With this concept we can allow quite large sets $\Finput$. 
Observe that we deviate from the 
existing literature on information-based complexity (IBC), 
see for example the books of Novak and Wo\'zniakowski~\cite{NW08,NW10,NW12}, 
where ``solvable'' means that we have a uniform cost bound and usually 
only bounded sets $\Finput$ are considered.

In this paper we mainly discuss input classes of the form 
\begin{equation}   \label{eq:cone}
  \begin{split}
  \Finput_{p,q,K} = \Finput_{p,q,K}(\pi)
  &:= \{ f \in L_q(\pi) \mid \Vert f - \Int (f) \Vert_q
    \leq K \, \Vert f - \Int (f) \Vert_p
    \} \,, \\
    \Yinput_{p,q,K}
    &:= \{ Y \in L_q \mid \| Y - \expect Y\|_q
    \leq K \, \|Y - \expect Y \|_p
    \} \,,
  \end{split}
\end{equation} 
where $1 \leq p < q \leq \infty$ and $K>1$.
The assumption, in fact, is a cone condition on (central) norms,
%  for $Y \in \Yinput_{p,q,K}$ any scaled and shifted version $aY + c$
%	 with $a,c \in \R$ is still in the input class.
% Robert: Vorher war die Sprache auf Funktionen,
%  				das sollte man vielleicht beibehalten?
% Daniel
for $f \in \Finput_{p,q,K}$ \checked{(resp.\ $Y \in \Yinput_{p,q,K}$)}
  any scaled and shifted version $af + c$ \checked{(resp.\ $aY + c$)}
  with $a,c \in \R$ is still in the input class.
These cone-shaped classes give a theoretical foundation to the idea
of adaptively determining a sample size for i.i.d.\ sampling
on ground of a variance estimation,
or -- more generally -- a $p$-moment estimation. 
The case $p=2$ and $q=4$ (bounded kurtosis)
was studied by Hickernell, Jiang, Liu, and Owen~\cite{HJLO13}. 

We stress that such problems cannot be solved in the sense
that the expected error (or root mean squared error)
is bounded for the whole class,
see Theorem~\ref{thm:E(err)<eps--noway}.
Hence, the understanding of ``solvable'' as
``small error with high probability''
is essential here.

% In Section~\ref{sec:UBs} we prove upper bounds
% for the
% classes~$\Yinput_{p,q,K}$ (and $\Finput_{p,q,K}$)
% \checked{by means of a new algorithm} that takes  i.i.d.\ samples.
\checked{We provide a new algorithm $A_{p,q,K}^{\eps,\delta}$
that solves \eqref{eq:E(Y)} %(resp. \eqref{eq:E(Y)})
on classes~$\Yinput_{p,q,K}$
(respectively, \eqref{eq:INT} on $\Finput_{p,q,K}$)
and prove an upper bound of the cost.
To measure the cost, we consider 
the random variable
$n(\omega,Y)$ 
that determines the sample size of the algorithm,
i.e.\ the number of copies of~$Y$ used by the algorithm.
Then the \emph{expected cost} for some input $Y$ is given by 
$\bar{n}(A_{p,q,K}^{\eps,\delta},Y):= \expect n(\,\cdot\,,Y)$.  
Our main results are:}
\begin{itemize}
 \item \checked{For arbitrary error thresholds~$\eps>0$
    and 
    \checked{uncertainty levels $\delta \in (0,1/2)$}
    we construct an $(\eps,\delta)$-approximating
    algorithm~$A_{p,q,K}^{\eps,\delta}$
    such that for~$Y \in \Yinput_{p,q,K}$ 
    %  the \emph{average cost}
    %  the \emph{expected cost}
    %  $\bar{n}(A_{p,q,K}^{\eps,\delta},Y):= \expect n(\,\cdot\,,Y)$ satisfies
    \begin{equation}           \label{eq:UBintro} 
      \bar{n}(A_{p,q,K}^{\eps,\delta},Y)
        \leq C_q \, K^{pq/(q-p)}
              \, \left(1 + \left(\frac{\|Y - \expect Y\|_1}{\eps}
              \right)^{\max\{1+1/(q-1),\,2\}}
              \right)
              \, \log \delta^{-1}
    \end{equation}
    where
    %$Y \in \Yinput_{p,q,K}$, and
    $C_q > 0$,
    see Theorem~\ref{thm:costbound} in Section~\ref{sec:UBs}.}
  \item \checked{In addition to that, in Section~3 we show matching lower bounds
    holding for \emph{all} algorithms, which we explain carefully in the following.}
\end{itemize}
% For arbitrary error thresholds~$\eps>0$
% and confidence levels \mbox{$1-\delta \in (0,1)$}
% we construct an $(\eps,\delta)$-approximating
% algorithm~$A_{p,q,K}^{\eps,\delta}$
% such that for~$Y \in \Yinput_{p,q,K}$ 
% %  the \emph{average cost}
% %  the \emph{expected cost}
% %  $\bar{n}(A_{p,q,K}^{\eps,\delta},Y):= \expect n(\,\cdot\,,Y)$ satisfies
% \begin{equation}           \label{eq:UBintro} 
%   \bar{n}(A_{p,q,K}^{\eps,\delta},Y)
%     \leq C_q \, K^{pq/(q-p)}
%           \, \left(1 + \left(\frac{\|Y - \expect Y\|_1}{\eps}
%           \right)^{\max\{1+1/(q-1),\,2\}}
%           \right)
%           \, \log \delta^{-1}
% \end{equation}
% where $Y \in \Yinput_{p,q,K}$,
% and $C_q > 0$, see Theorem~\ref{thm:costbound}.
%  Here, $n(\rjk{\omega},Y)$ 
%  is the random variable
%  that determines the sample size of the algorithm,
%  i.e.\ the number of copies of~$Y$ used by the algorithm.
% Matching lower bounds holding for \emph{all} algorithms
% are contained in Section~\ref{sec:LBs}.

\checked{Irrespective} of the specified accuracy~$\eps$
and the input random variable,
the cost bound~\eqref{eq:UBintro} is at least
some constant times $K^{pq/(q-p)} \, \log \delta^{-1}$.
This fixed cost comes from estimating $\|Y-\expect Y\|_1$
within our particular algorithm and
can be interpreted as the price we need to pay
for not knowing the statistical dispersion of~$Y$.
Theorem~\ref{thm:fixedcost} shows that such constant effort
is inevitable for all \mbox{$(\eps,\delta)$-approximating} 
algorithms,
with a lower bound that reproduces the dependence on~$K$ and $\delta$.
The cost bound~\eqref{eq:UBintro} also reveals
the adaption
of our algorithm
to the input,
the lower bounds of Theorem~\ref{thm:LB-YcapB2} and~\ref{thm:LB-YcapBq}
exhibit the same dependence on $\eps$, $\delta$,
and the norm.
In doing so, we obtain (up to constants) tight upper and lower bounds
for the $(\eps,\delta)$-complexity of computing expected 
values~\eqref{eq:E(Y)}.

For classes of inputs like~\eqref{eq:cone},
problems \eqref{eq:INT} and \eqref{eq:E(Y)}
are closely related.
However,
the class of (randomized) algorithms for the
integration problem~\eqref{eq:INT},
as considered in numerical analysis or complexity theory,
is usually broader than the class of algorithms
that are used in statistics for the
expectation problem~\eqref{eq:E(Y)}.
For the
% Robert: Mein Sprachgefuehl ist verwirrt von der Anforderung des Referees,
%         hier den Artikel zu entfernen.
%         Das kommt vielleicht daher, dass er die Referenz als Wort empfindet,
%         ich (und ihr anscheinend auch?) jedoch nur als zusaetzlichen Hinweis,
%         als Einschub, was wir mit ``numerical problem'' meinen.
%	  Weil zu oft auf die Gleichung referieren langweilig ist,
%	  schlage ich jetzt alternativ vor, INT und \expect zu nehmen.
%         Diese unsere Entscheidung wird wohl nicht die Revision gefaehrden...
% Daniel: Bzgl. dem Artikel: Naja, das koennt Ihre entscheiden, ich gehe auch sehr gern mit dem 
% 	Referee mit und der Artikel kann weg. Bzgl.: \Int versus (1), tendiere ich zu (1) 
% 	und Langeweile. Kann aber gut mit Int leben, deshalb habe
% 	ich es blau gemacht.
numerical problem~\checked{$\Int$} %~\eqref{eq:INT}
we consider all randomized algorithms  
that use finitely many function values for any input $f \in \Finput$,
whereas for the statistical problem~\checked{$\expect$} %~\eqref{eq:E(Y)}
the kind of available information is restricted: 
We may use only information from 
realizations of independent random variables~$Y_1, Y_2, \ldots, Y_n$,
which all have the same distribution as the 
\emph{basis experiment}~$Y \in \Yinput$,
where $n=n(\checked{\omega},Y)$ is an almost surely finite random variable
given by a stopping rule
that depends on the already observed experiments.
On the other hand, methods designed for computing expected values
can be used for integration problems by building Monte Carlo experiments
$Y_i:= f(\vecX_i)$ with
i.i.d.\ sampling nodes
\mbox{$\vecX_1,\vecX_2,\ldots \stackrel{\text{iid}}{\sim} \pi$}.
Usually,
the interpretation of computing expected values as a statistical problem
means that no deeper knowledge on the probability space on which~$Y$ is defined
is taken into consideration.
One can design algorithms that work for simulated samples
as well as real-live data, all of which are provided by the user.
In contrast,
the numerical interpretation opens the possibility of including knowledge
on the local structure of integrands~$f$ from the input class,
typically smoothness assumptions,
which could make simple i.i.d.\ sampling inferior to more sophisticated methods.
Such locality, however, is not present
in the definition of the classes~$\Finput_{p,q,K}$.
For the wider class of general randomized algorithms
that are feasible for the integration problem~\eqref{eq:INT},
lower bound proofs are more technical
and postponed to Section~\ref{sec:INTLBs}.
Actually, we reproduce the lower bound results
for i.i.d.-based algorithms,
see in particular Corollary~\ref{cor:LBsINT}.

%% Robert: Das sollte doch jetzt im Outlook stehen?
% Daniel: Finde ich nicht! Mir ist das recht hier, im Outlook darf das auch nochmal vorkommen. 
The new results for complexity on cones~\eqref{eq:cone} are different from
classical statements made in numerical analysis,
where the cost of an algorithm is given
by the cost for the worst input from a class of inputs
(usually a bounded set).
Here,
the cost is infinite for the whole class~$\Yinput_{p,q,K}$,
but it is finite for any single input~$Y$.	

\subsubsection*{Historical remarks.} 
Varying sample size has been considered in statistical testing since the 1940s,
so-called \emph{sequential tests} have been 
promoted by Wald 1945~\cite{Wa45} and 1948~\cite{Wa48}.
Later books on sequential methods include
Siegmund 1985~\cite[Chapter VII]{Sieg85}.
We also refer to the literature given in these sources.
Most of the early works deal with quite restricted assumptions
on the class of input random variables, e.g.\ normality in Stein 1945~\cite{St45}.
Other results under more general assumptions,
e.g.\ finite (but unknown) variance,
are of asymptotic nature, see Chow and Robbins 1965~\cite{CR65}.
To our knowledge, Hickernell et al.\ 2013~\cite{HJLO13}
were the first who by introducing the cones~$\Yinput_{2,4,K}$
provided general conditions on distributions
such that the mean of a random variable with unknown variance can be estimated
to within a given absolute error threshold with guaranteed confidence.
We improve the algorithm and the upper bound 
and also prove matching lower bounds.

Studies on optimally adapted sample size selection
can be found in papers of Dagum et al.~\cite{DKLR00},
Gajek, Niemiro, and Pokarowski~\cite{GNP13}, and Huber~\cite{Hu17Bernoulli}.
These authors study 
the complexity of approximating the expected value~$\expect Y > 0$
of a random variable~$0 \leq Y \leq 1$
to within a prescribed
relative error,
in detail \mbox{$|A(Y) - \expect Y|/\expect Y \leq \eps$},
with high confidence~$1-\delta$.
In such a situation,
the cost is roughly inversely proportional to the target value~$\expect Y$,
hence increasing as the latter approaches~$0$.
The stopping rule is rather simple:
The sample size~$n$ is chosen as the first number
such that~\mbox{$Y_1 + \ldots + Y_n \geq r$} for some given threshold~$r > 0$,
the output is then~$r/n$.
Here, \mbox{$r \asymp \eps^{-2} \log \delta^{-1}$}.

\checked{
\subsubsection*{Outlook.}

This paper is in the tradition of IBC, see \cite{No88,TWW88}.
A recent survey on the complexity of the integration problem is \cite{No16}.
Let $F_1$, $F_2$ and $R$ be normed linear spaces.
For a (linear) solution operator $S: F_2 \to R$,
in IBC one usually assumes 
that the input $f$ is from a ball $\Barrel_2 \subset F_2$. 
In the underlying work we assume that $f$ is from a cone $\Finput_{p,q,K}$. 
The idea to take ``cones, not balls'' was suggested and studied 
by Hickernell and his colleagues, see \cite{CDHHZ14} and \cite{HJLO13}. 
Interesting cones could be of the form
\begin{equation} \label{eq:generalcone}
  \Finput
    = \{ f \in F_2 \mid \|f\|_{F_2} \leq K \, \|f\|_{F_1} \} , 
\end{equation}
where $\|\cdot\|_{F_i}$ denotes the norm of $F_i$ for $i=1,2$, and
$F_2$ is embedded into $F_1$
with $\|\cdot\|_{F_1} \leq C \, \|\cdot\|_{F_2}$ and $K > 1/C > 0$.
For numerical problems of this type 
% Robert: gleiche Fragen bei relativem Fehler!
one can ask:
Is there a deterministic or randomized algorithm that terminates  
(with finite cost, depending on $f$) for every input $f \in \Finput$
with a 
(probabilistic)
$\eps$-approximation
with respect to the error measured in the norm $\|\cdot\|_R$ of the space~$R$? 
The worst case cost might be infinite for any succeeding algorithm,
still one could examine whether the cost that an algorithm~$A$ exhibits
  for an input~$f$
  is close to the minimal cost that any algorithm which succeeds on~$\Finput$
  would possess for this particular input~$f$.
  In our setting,
  we partially answer this question with the concept of the minimal fixed cost,
  see Theorem~\ref{thm:fixedcost},
  but there we lack a dependence on the particular input.
  By intersecting~$\Finput$ with a ball~$\Barrel \subset F_2$, though,
  we can study at least the classical worst case complexity on $\Finput \cap \Barrel$
  to get an idea on how the cost must depend on the size of the input,
  compare Theorems~\ref{thm:LB-YcapB2} and~\ref{thm:LB-YcapBq}.
%%Robert: Ich bin ich nicht happy mit dem, was da unten steht. Dann lieber was substantielles (siehe Einfuegung drueber), auch wenn wir es urspuenglich nicht drin haben wollten.
% Daniel: Fuer mich ist es okay so wie es jetzt ist.
%but one can ask all complexity questions if we intersect $\Finput$ with a 
%ball $\Barrel$ within $F_2$.
%The point here is, of course, that the algorithm should work for every 
%$f \in \Finput$ and should be almost optimal for all 
%$\Finput \cap \Barrel$. 
Such an analysis seems to be appropriate for algorithms on the cone~$\Finput$
that
%In order to construct good algorithms 
%for the cone $\Finput$ one may
use %algorithms for approximating
an estimator for $\|f\|_{F_1}$ with an error estimate 
proportional to $\|f\|_{F_2}$.
Actually, 
a main ingredient of our algorithm is the approximation
of the (centered) $L_p$-norm 
of a function or random variable
in~$F_2 = L_q$.
The problem of estimating the $L_p$-norm
  was recently studied by Heinrich~\cite{H18}
  for functions from %the unit ball of
  a Sobolev space $F_2$.
The cone assumption in~\eqref{eq:generalcone}
  then implies an upper bound for the stronger norm~$\|f\|_{F_2}$,
  and optimal algorithms for the problem~$S$ on the ball~$\Barrel_2 \subset F_2$
  can be scaled accordingly to conclude the new method.
}

\section{A moment-adapted algorithm and its analysis}  \label{s2}
\label{sec:UBs}

\subsection{The general structure of the algorithm}

The aim is to compute the expected value~$\expect Y$
for random variables~$Y$ from the cone~$\Yinput_{p,q,K}$, see~\eqref{eq:cone}.
%Here, we denote $\|X\|_p := (\expect |X|^p)^{1/p}$ for random variables~$X$.
The special case $p = 2$ and $q = 4$ has been introduced by
Hickernell et al.~2013~\cite{HJLO13}
together with an algorithm that solves the problem.
We modify their method, that way providing an algorithmic solution
to the whole parameter range, $1 \leq p < q \leq \infty$,
and obtain optimal cost rates
in~$\eps$ and~$\delta$.

Similarly to Hickernell et al.~\cite{HJLO13}, see also Stein~\cite{St45},
we consider a two-stage algorithm.
In the first step we estimate the central absolute $p$-moment of~$Y$,
based on which we choose the number of samples that we take for the second step
in order to compute a good approximation of the mean~\mbox{$\expect Y$}.
In both stages a median of independent estimators is used.

\begin{alg}[$A_{k,p,m}^{k',s,\eta}$] \label{alg:A_kpmksh}
  \emph{Parameters:\;} odd natural numbers $k$ and $k'$,
  integer $m \geq 2$;
  real numbers \mbox{$p,s \geq 1$}, and $\eta > 1$.\\
  \emph{Input:\;} Generator for i.i.d.\ copies~$Y_1,Y_2,\ldots$
    of a random variable~$Y$.\\
  \emph{Proceed as follows:\;}
  \begin{enumerate}
    \item Realize $k$~independent $p$-moment estimations
      using~$n_1 := k\,m$ observations,
      \begin{equation*}
        \widehat{R}_m^{(\ell)}
          := \frac{1}{m} \sum_{i = (\ell-1) m + 1}^{\ell m}
                |Y_i - \widehat{M}_m^{(\ell)}|^p \,,
        \qquad \text{where}\quad
        \widehat{M}_m^{(\ell)}
          := \frac{1}{m} \sum_{i = (\ell-1) m + 1}^{\ell m} Y_i \,,
      \end{equation*}
      for $\ell = 1,\ldots,k$,
      and determine their median (which is well defined since~$k$ is odd),
      \begin{equation*}
        \widehat{R}_{k,m} := \median(\widehat{R}_m^{(1)},\ldots,\widehat{R}_m^{(k)}) \,.
      \end{equation*}
    \item Realize $k'$ independent mean estimations,
      using $n_2 := k'\,m'$ additional observations,
      where we choose
      \mbox{$m' := \max\{\lceil \eta \, (\widehat{R}_{k,m})^s \rceil,\, 1\}$},
      \begin{equation*}
        \wt{M}_{m'}^{(\ell)}
          := \frac{1}{m'} \sum_{i = n_1 + (\ell-1) m' + 1}^{n_1 + \ell m'} Y_i \,,
      \end{equation*}
      here $\ell = 1,\ldots,k'$.
      Finally, output the  median (which is well defined since~$k'$ is odd),
      \begin{equation*}
        A_{k,p,m}^{k',s,\eta}(Y)
          = \wt{M}_{k',m'}
          := \median(\wt{M}_{m'}^{(1)},\ldots,\wt{M}_{m'}^{(k')}) \,.
      \end{equation*}
  \end{enumerate}
  The \checked{overall} cost is~%
  \mbox{$n(\omega,Y) = n_1 + n_2
            = k \, m
              + k' \,\max\{\lceil \eta \, (\widehat{R}_{k,m}(\omega))^s \rceil,\, 1\}$}.
\end{alg}

The main task for the subsequent sections is to determine
appropriate values for the parameters~$k$, $k'$, $m$, $s$, and $\eta$,
such that the algorithm~$A_{k,p,m}^{k',s,\eta}$ 
is $(\eps,\delta)$-approximating on the class $\Yinput_{p,q,K}$, 
that is,
\begin{equation*}
  \P\left\{| A_{k,p,m}^{k',s,\eta}(Y)-\expect Y |
    \leq \eps\right\}\geq 1-\delta
\end{equation*}
for all $Y\in\Yinput_{p,q,K}$ and $\eps,\delta>0$.
Hence the problem \eqref{eq:E(Y)} is solvable on $\Yinput_{p,q,K}$,
see Theorem~\ref{thm:solvability}.
The second task is then to control the expected cost of the algorithm
for any fixed input~$Y$,
and this is where~$k$ might need to be larger than~$k'$,
see Lemma~\ref{lem:E(R^s)}.
We are not confined to estimating the $p$-moment
for~\mbox{$Y \in \Yinput_{p,q,K}$},
indeed, any~$\wt{p} \in [1,q)$ is possible.
We provide all the tools to understand these variants
and briefly comment on possible strategies, see Remark~\ref{rem:balance}.
For simplicity, however, we focus on
an algorithm~$A_{\eps,\delta}^{p,q,K}$
that does the job via an estimate of the first central moment
(i.e.\ $\wt{p} = 1$),
though the cost bound 
involves probably suboptimal $K$-dependent constants
for the second stage then.

The described algorithm uses a method of `probability amplification',
namely the `median trick', 
see Alon et al.~\cite{AMS96} and Jerrum et al.~\cite{JVV86}.
The intention is to build an estimator~$\widehat{\theta}$
for an unknown quantity~$\theta \in \R$
such that with high probability the outcome lies within
a certain interval~$I \subset \R$.
%Usually, %% Robert: eigentlich immer, sonst sinnlos.
This interval depends on~$\theta$ and
represents the deviation of $\widehat{\theta}$ from~$\theta$
that we are willing to tolerate.
The following proposition is a minor modification of
\cite[Proposition~2.1, see also (2.6)]{NiPo09} from Niemiro and Pokarowski.

\begin{prop}[Median trick] \label{prop:med}
  Let~$(\widehat{\theta}_{\ell})_{\ell\in\N}$ be a sequence of real-valued i.i.d.\ 
  random variables,
  and let~\mbox{$I \subset \R$} be an interval such that
  \begin{equation*}
    \P\{\widehat{\theta}_{\ell} \in I\} \geq 1 - \alpha \,,
  \end{equation*}
  where~$0 < \alpha < 1/2$.
  For an odd natural number~$k$, define 
  the median of $k$ independent estimators,
  \mbox{$\widehat{\theta}
          := \median(\widehat{\theta}_1,\ldots,\widehat{\theta}_k)$}.
  Then
  \begin{equation*}
    \P\{\widehat{\theta} \in I\}
      \geq 1 - \frac{1}{2} (4\alpha(1-\alpha))^{k/2} \,.
  \end{equation*}
\end{prop}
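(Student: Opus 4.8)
The plan is to reduce the statement to a binomial tail estimate. Write the interval~$I$ in the form with endpoints $a \le b$ (allowing $a = -\infty$, $b = +\infty$, and either endpoint open or closed). The first step is the elementary observation that, because $k$ is odd, the event $\{\widehat\theta \notin I\}$ forces at least $(k+1)/2$ of the samples to lie outside~$I$: if $\widehat\theta > b$, then in sorted order the $\tfrac{k+1}{2}$-th value exceeds~$b$, so at least $(k+1)/2$ of the $\widehat\theta_\ell$ are $>b$ and hence outside~$I$; the case $\widehat\theta < a$ is symmetric. Therefore, setting $N := \#\{\ell \le k : \widehat\theta_\ell \notin I\}$, which is $\mathrm{Binomial}(k,\beta)$-distributed with $\beta := \P\{\widehat\theta_\ell \notin I\} \le \alpha$ by the i.i.d.\ assumption, we get
\[
  \P\{\widehat\theta \notin I\} \;\le\; \P\bigl\{N \ge (k+1)/2\bigr\} \;=\; \sum_{j \ge (k+1)/2} \binom{k}{j}\, \beta^{j}(1-\beta)^{k-j}.
\]

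The second step bounds this tail. For every index $j \ge (k+1)/2 > k/2$, and using $\beta \le \alpha < 1/2$ so that $\beta/(1-\beta) < 1$, we have $\beta^{j}(1-\beta)^{k-j} = (1-\beta)^{k}\,(\beta/(1-\beta))^{j} \le (\beta(1-\beta))^{k/2}$. Summing and invoking the combinatorial identity $\sum_{j \ge (k+1)/2}\binom{k}{j} = \tfrac12\sum_{j=0}^{k}\binom{k}{j} = 2^{k-1}$ — which holds precisely because $k$ is odd, via the symmetry $\binom{k}{j} = \binom{k}{k-j}$ — yields
\[
  \P\bigl\{N \ge (k+1)/2\bigr\} \;\le\; 2^{k-1}\,(\beta(1-\beta))^{k/2} \;=\; \tfrac12\,(4\beta(1-\beta))^{k/2}.
\]
Finally, since $t \mapsto t(1-t)$ is nondecreasing on $[0,1/2]$ and $\beta \le \alpha < 1/2$, we may replace $\beta$ by $\alpha$ to obtain $\P\{\widehat\theta \notin I\} \le \tfrac12\,(4\alpha(1-\alpha))^{k/2}$, which is the asserted inequality.

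There is no genuine obstacle in this argument; the points that deserve a little care are the reduction step, which must be phrased to cover intervals of every type (open, closed, half-infinite), and the identity $\sum_{j \ge (k+1)/2}\binom{k}{j} = 2^{k-1}$, which fails for even~$k$. It is also worth pinpointing where the hypothesis $\alpha < 1/2$ is used: it enters both in the termwise bound (to guarantee $\beta/(1-\beta) < 1$) and in the final monotonicity replacement, and indeed without it the median of $k$ estimators provides no amplification at all, since $4\alpha(1-\alpha) \ge 1$ when $\alpha \ge 1/2$.
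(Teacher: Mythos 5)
Your proof is correct. The paper itself gives no proof of this proposition but defers to Niemiro and Pokarowski, and your argument --- reducing $\{\widehat\theta\notin I\}$ to the binomial tail event $\{N\ge (k+1)/2\}$, bounding each term by $(\beta(1-\beta))^{k/2}$, and using $\sum_{j\ge(k+1)/2}\binom{k}{j}=2^{k-1}$ for odd $k$ --- is exactly the standard derivation behind that reference, so there is nothing to add beyond the care you already flag about stating the order-statistic reduction uniformly for all interval types.
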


\begin{rem}
 Already Nemirovsky and Yudin 1983 \cite[Proposition on p.~244]{NY83}
 suggest a similar probability amplification scheme.
 Recently, Devroye et al.~\cite{DLLO16} and Huber~\cite{Hu17relVar}
 worked on improvements of the median-of-means.
 Finally, let us mention that several proposals 
 for a generalization of the median-of-means
 for regression problems have been published, see 
\cite{HS16,JL16,JLO17,LO12,LM16,LM17,Mi15}.
\end{rem}

\subsection{Some interpolation results}

The condition %conditions
 that defines the cone~$\Yinput_{p,q,K}$ %have
\checked{has} implications
on the relation between other central absolute moments
for exponents~\mbox{$1 \leq \wt{p} < \wt{q} \leq q$}.

\begin{lem}\label{lem:RVprq}
  Let $Z \in L_q$ be a random variable
  and~\mbox{$1 \leq p < r < q \leq \infty$}
  where $\frac{1}{r} = \frac{1-\lambda}{p} + \frac{\lambda}{q}$
  for some~$0 < \lambda < 1$.
  Then
  \begin{equation*}
    \|Z\|_p
      \leq \|Z\|_r
      \leq \|Z\|_p^{1-\lambda} \|Z\|_q^{\lambda} \,.
  \end{equation*}
\end{lem}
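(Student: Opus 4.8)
The statement is the classical interpolation (log-convexity) inequality for $L_p$-norms, so the plan is to prove the two inequalities separately using H\"older's inequality, and treating the case $q = \infty$ by a minor modification at the end.

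For the right-hand inequality $\|Z\|_r \leq \|Z\|_p^{1-\lambda}\|Z\|_q^{\lambda}$ with $q < \infty$: the idea is to write $|Z|^r = |Z|^{r(1-\lambda)}\cdot|Z|^{r\lambda}$ and apply H\"older's inequality with the conjugate exponents $\frac{p}{r(1-\lambda)}$ and $\frac{q}{r\lambda}$. I would first check that these are indeed conjugate exponents, i.e.\ that $\frac{r(1-\lambda)}{p} + \frac{r\lambda}{q} = 1$, which is exactly the hypothesis $\frac{1}{r} = \frac{1-\lambda}{p} + \frac{\lambda}{q}$ after multiplying through by $r$; I should also note both exponents are $\geq 1$ since $0 < \lambda < 1$ and $p < r < q$. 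H\"older then yields
\begin{equation*}
  \int |Z|^r \dint\P
    \leq \left(\int |Z|^{p}\dint\P\right)^{r(1-\lambda)/p}
         \left(\int |Z|^{q}\dint\P\right)^{r\lambda/q},
\end{equation*}
and raising both sides to the power $1/r$ gives the claim. For $q = \infty$ the argument is even simpler: bound $|Z|^r = |Z|^{p}\,|Z|^{r-p} \leq |Z|^p\,\|Z\|_\infty^{r-p}$ pointwise, integrate, take the $1/r$-th power, and verify that in this regime ($\lambda/q \to 0$, so $\frac{1}{r} = \frac{1-\lambda}{p}$) the resulting exponents $\frac{p}{r}$ on $\|Z\|_p$ and $\frac{r-p}{r}$ on $\|Z\|_\infty$ coincide with $1-\lambda$ and $\lambda$ respectively.

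For the left-hand inequality $\|Z\|_p \leq \|Z\|_r$, this is just monotonicity of $L_s$-norms on a probability space: since $\P$ is a probability measure and $p < r$, one applies H\"older (or Jensen with the convex function $t \mapsto t^{r/p}$) to $\int |Z|^p \dint\P \leq (\int |Z|^r \dint\P)^{p/r}$. I would spell this out in one line and remark it holds for $r = \infty$ trivially.

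I do not anticipate a genuine obstacle here; this is a textbook computation. The only points requiring a little care are: (i) making sure the H\"older exponents are correctly identified and lie in $[1,\infty]$, which pins down exactly why the hypothesis on $\lambda$ is stated the way it is; and (ii) handling $q = \infty$ as a separate (easier) case so that the lemma as stated, which explicitly allows $q = \infty$, is fully covered. I should also note at the outset that all quantities are finite because $Z \in L_q$ and, on a probability space, $L_q \subseteq L_r \subseteq L_p$, so there is no issue with the inequalities being vacuous or involving $\infty$.
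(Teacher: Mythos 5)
Your proposal is correct and follows the same route as the paper: the second inequality via H\"older's inequality applied to the factorization $|Z|^r = |Z|^{r(1-\lambda)}|Z|^{r\lambda}$ with conjugate exponents $\frac{p}{r(1-\lambda)}$ and $\frac{q}{r\lambda}$, and the first via Jensen/monotonicity of norms on a probability space. Your explicit treatment of the case $q=\infty$ and the verification that the exponents are conjugate are details the paper leaves implicit, but the argument is the same.
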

\begin{proof}
  The first inequality is well-known as a consequence of Jensen's inequality.
  For the second inequality we use H\"older's inequality,
  without loss of generality \mbox{$Z \geq 0$}, and
  \begin{equation*}
    \|Z\|_r^r
      = \expect \left[Z^{(1-\lambda) r} \, Z^{\lambda r}\right]
      \leq \| Z^{(1-\lambda) r}\|_{\frac{p}{(1-\lambda)r}}
              \, \| Z^{\lambda r}\|_{\frac{q}{\lambda r}}
      = \|Z\|_p^{(1-\lambda) r} \|Z\|_q^{\lambda r} \,,
  \end{equation*}
  finishes the proof.
\end{proof}

As an immediate consequence we obtain the following inclusion properties:

\begin{cor}\label{cor:pq-cone_subset}
  Let \mbox{$1 \leq p < r < q \leq \infty$}
  where~\mbox{$\frac{1}{r} = \frac{1-\lambda}{p} + \frac{\lambda}{q}$}.
  Then
  \begin{enumerate}
    \item $\Yinput_{p,q,K}
              \subset \Yinput_{r,q,K}
              \subset \Yinput_{p,q,K'}$\;
      with\; $K' := K^{\frac{1}{1-\lambda}}
                  = K^{\frac{r\,(q-p)}{p\,(q-r)}}
                  > K$,
    \item $\Yinput_{p,q,K} \subset \Yinput_{p,r,K'}$\;
      with\; $K' := K^{\lambda}
                  = K^{\frac{q\,(r-p)}{r\,(q-p)}}
                  < K$.
  \end{enumerate}
  In particular,
  \begin{itemize}    
    \item
      we have
      \;\mbox{$\Yinput_{p,q,K} \subset \Yinput_{1,q,K'}$}
      \;with\; $K' := K^{\frac{p\,(q-1)}{q-p}}$\;
      \checked{(that is~$K' = K^{p}$ for $q = \infty$)},
    \item
      for~$q \geq 2$ we have
      \;\mbox{$\Yinput_{p,q,K} \subset \Yinput_{1,2,K'}$}
      \;with\; $K' := K^{\frac{p\,q}{2\,(q-p)}}$\;\\
      (that is~$K' = K^{p/2}$ in the case $q = \infty$).
  \end{itemize}
\end{cor}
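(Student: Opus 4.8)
The plan is to derive Corollary~\ref{cor:pq-cone_subset} directly from Lemma~\ref{lem:RVprq} by applying the interpolation inequality to the centered random variable $Z := Y - \expect Y$. All statements are then just rewritings of the chain $\|Z\|_p \le \|Z\|_r \le \|Z\|_p^{1-\lambda}\|Z\|_q^{\lambda}$, so the only genuine work is bookkeeping of the constants and checking the exponent algebra.

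\medskip

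For part~(i), let $Y \in \Yinput_{p,q,K}$, i.e.\ $\|Z\|_q \le K\|Z\|_p$. The inclusion $\Yinput_{p,q,K} \subset \Yinput_{r,q,K}$ follows immediately from the first inequality $\|Z\|_p \le \|Z\|_r$ in Lemma~\ref{lem:RVprq}, since then $\|Z\|_q \le K\|Z\|_p \le K\|Z\|_r$. For the second inclusion $\Yinput_{r,q,K} \subset \Yinput_{p,q,K'}$, I would use the second inequality: from $\|Z\|_r \le \|Z\|_p^{1-\lambda}\|Z\|_q^{\lambda}$ and $\|Z\|_q \le K\|Z\|_r$ one gets $\|Z\|_q \le K\|Z\|_p^{1-\lambda}\|Z\|_q^{\lambda}$, hence $\|Z\|_q^{1-\lambda} \le K\|Z\|_p^{1-\lambda}$, i.e.\ $\|Z\|_q \le K^{1/(1-\lambda)}\|Z\|_p$. (The degenerate case $\|Z\|_p = 0$ forces $Z = 0$ a.s.\ and the membership is trivial, so one may assume $\|Z\|_p > 0$ and divide.) It remains to rewrite $\frac{1}{1-\lambda}$ in terms of $p,q,r$: solving $\frac{1}{r} = \frac{1-\lambda}{p} + \frac{\lambda}{q}$ gives $\lambda = \frac{p(q-r)}{r(q-p)} \cdot \frac{r}{p}$... more cleanly, $\lambda = \frac{1/p - 1/r}{1/p - 1/q}$ and $1-\lambda = \frac{1/r - 1/q}{1/p - 1/q}$, so $\frac{1}{1-\lambda} = \frac{1/p-1/q}{1/r-1/q}$, and a short simplification yields $\frac{1}{1-\lambda} = \frac{r(q-p)}{p(q-r)}$, matching the claimed $K'$.

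\medskip

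Part~(ii) is analogous but cheaper: for $Y \in \Yinput_{p,q,K}$ and the same $Z$, the bound $\|Z\|_r \le \|Z\|_p^{1-\lambda}\|Z\|_q^{\lambda} \le \|Z\|_p^{1-\lambda}(K\|Z\|_p)^{\lambda} = K^{\lambda}\|Z\|_p$ shows $Y \in \Yinput_{p,r,K^{\lambda}}$; and since $q \le \infty$ we have $\lambda < 1$, so $K' = K^{\lambda} < K$. Again one checks $\lambda = \frac{1/p - 1/r}{1/p - 1/q}$ simplifies to $\frac{q(r-p)}{r(q-p)}$. The two ``in particular'' bullets are the special cases $\wt q = q,\ \wt p = 1$ (so $r = 1$ is the target lower exponent, applying part~(i) with the roles set so that the smaller exponent becomes $1$) and $\wt q = 2,\ \wt p = 1$; one simply substitutes $r = 1$, respectively the pair $(1,2)$, into the formulas for $K'$ from parts~(i) and (ii) and reads off $K^{p(q-1)/(q-p)}$ and $K^{pq/(2(q-p))}$, with the stated $q = \infty$ limits obtained by letting $q \to \infty$. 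I expect no real obstacle here; the only thing to be careful about is the $q = \infty$ endpoint, where $\|\cdot\|_\infty$ and the convention $\frac{1}{q} = 0$ must be handled, and the case $\|Z\|_p = 0$, which should be dispatched at the outset.
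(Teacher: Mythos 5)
Your proposal is correct and takes the same route as the paper, whose entire proof is the one-line instruction to apply Lemma~\ref{lem:RVprq} to $Z := Y - \expect Y$; your exponent computations ($\frac{1}{1-\lambda} = \frac{r(q-p)}{p(q-r)}$ and $\lambda = \frac{q(r-p)}{r(q-p)}$) and the handling of the degenerate case all check out. The only slight imprecision is the phrase ``substitutes $r=1$'': what one actually does is apply part~(i) to the relabelled triple $(1,p,q)$ and, for the second bullet, compose that with part~(ii) applied to $(1,2,q)$ — but your remark about ``setting the roles so that the smaller exponent becomes $1$'' shows you have exactly this in mind.
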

\begin{proof}
  Apply Lemma~\ref{lem:RVprq} to $Z := Y - \expect Y$.
\end{proof}

\begin{rem}    
  A special case of the above results shows how the cone
  that has been considered in Hickernell et al.~\cite{HJLO13}
  is embedded in a larger cone of $L_2$-functions,
  we have $\Yinput_{2,4,K} \subset \Yinput_{1,2,K^2}$,
  such that it is sufficient
  to provide algorithms that work for the larger cone.
\end{rem} 

We use an extension of a special case of the Marcinkiewicz-Zygmund inequality, 
see \cite{ReLi01} and \cite[Theorem~2]{vBEs65}
as well as \cite[Sect. 2.2.8, Proposition~3]{No88}. 
We provide a proof of this inequality
by
using the same technique as in \cite[Proposition~5.4]{He94} and \cite{RuSc15}. 
This leads to slightly better constants than those in the literature. 

\begin{lem} \label{lem:meanest}
  Let $1 < q \leq 4$.
  Then for any mean zero random variable~$Z \in L_q$
  and independent copies
  $Z_1,\ldots,Z_m$ of $Z$, that is, $Z_i \sim Z$ for $i=1,\ldots,m$,
  we have
  \begin{equation*}
    \left\|\frac{1}{m} \sum_{i=1}^m Z_i\right\|_q
      \leq\begin{cases}
            2^{2/q - 1} \, m^{-(1-1/q)} \, \|Z\|_q
              \quad& \text{if $1 < q \leq 2,$}\\
            (2 \sqrt{3})^{1-2/q} \, m^{-1/2} \, \|Z\|_q
              \quad& \text{if $2 < q \leq 4$.}
          \end{cases}
  \end{equation*}
\end{lem}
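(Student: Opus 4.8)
The plan is to treat the two regimes $1<q\le 2$ and $2<q\le 4$ by the same device: introduce a symmetrization via an independent copy, and then interpolate between the cases $q=1$ (trivial, by the triangle inequality applied to $|\cdot|$ \emph{without} symmetrization), $q=2$ (exact, by orthogonality of the mean-zero summands), and $q=4$ (a direct fourth-moment expansion). First I would record the endpoint $q=2$: since the $Z_i$ are mean zero and independent, $\expect\bigl|\tfrac1m\sum Z_i\bigr|^2 = \tfrac1{m^2}\sum \expect|Z_i|^2 = \tfrac1m\|Z\|_2^2$, so the constant is $1$ and the exponent of $m$ is $-1/2$. Next I would handle $q=4$: expanding $\expect\bigl(\sum Z_i\bigr)^4$ and discarding all mixed terms with an odd power of some factor (they vanish by independence and mean zero), only the terms $\sum_i \expect Z_i^4$ and $3\sum_{i\ne j}\expect Z_i^2\expect Z_j^2$ survive; bounding $\expect Z_i^2 \le \|Z\|_4^2$ (Jensen) gives $\expect\bigl(\sum Z_i\bigr)^4 \le m\|Z\|_4^4 + 3m(m-1)\|Z\|_4^4 \le 3m^2\|Z\|_4^4$, hence $\bigl\|\tfrac1m\sum Z_i\bigr\|_4 \le 3^{1/4} m^{-1/2}\|Z\|_4$. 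Now interpolating between $q=2$ and $q=4$ by Lemma~\ref{lem:RVprq} applied to the random variable $S_m := \tfrac1m\sum_{i=1}^m Z_i$: for $2<q\le 4$ write $\tfrac1q = \tfrac{1-\lambda}{2}+\tfrac{\lambda}{4}$, i.e.\ $\lambda = 4(1/2-1/q) = 2-4/q$, and $1-\lambda = 4/q-1$; then $\|S_m\|_q \le \|S_m\|_2^{1-\lambda}\|S_m\|_4^{\lambda} \le \bigl(m^{-1/2}\|Z\|_2\bigr)^{1-\lambda}\bigl(3^{1/4}m^{-1/2}\|Z\|_4\bigr)^{\lambda}$. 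The power of $m$ is $m^{-1/2}$ throughout, the numerical constant is $3^{\lambda/4} = 3^{(2-4/q)/4} = (2\sqrt3)^{1-2/q}/2^{1-2/q}\cdot\dots$; more cleanly, $3^{\lambda/4} = 3^{1/2-1/q}$, and one checks $3^{1/2-1/q} \le (2\sqrt3)^{1-2/q}$ since $(2\sqrt3)^{1-2/q} = 2^{1-2/q} 3^{1/2-1/q} \ge 3^{1/2-1/q}$. Finally $\|Z\|_2^{1-\lambda}\|Z\|_4^{\lambda} \le \|Z\|_q$ would go the wrong way, so instead I bound $\|Z\|_2 \le \|Z\|_q$ and $\|Z\|_4$: here one must be careful, because for $q<4$ we do \emph{not} control $\|Z\|_4$ by $\|Z\|_q$. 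This is the point where the argument as sketched needs the extra hypothesis that only $\|Z\|_q$ appears — so the correct interpolation is between $q=1$ and $q=2$ rescaled, not involving $q=4$ at all for $q\le 2$, and for $2<q\le4$ one interpolates the \emph{operator inequality}, not the random variable.

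Let me restate the clean route, which avoids the above pitfall. For $1<q\le 2$: the map $Z\mapsto S_m$ is bounded from $L_1$ to $L_1$ with norm $1$ (triangle inequality: $\|S_m\|_1 \le \tfrac1m\sum\|Z_i\|_1 = \|Z\|_1$, no mean-zero needed) and from $L_2$ to $L_2$ with norm $m^{-1/2}$ (orthogonality, as above). The operators $T_i: Z\mapsto Z_i$ on the product space are not linear in a way that lets one apply Riesz--Thorin directly to $Z\mapsto S_m$, so instead I would argue by the standard Marcinkiewicz--Zygmund-type comparison: it suffices to prove $\|S_m\|_q^q \le (\text{const})\, m^{-(q-1)}\|Z\|_q^q$, and here one uses the von Bahr--Esseen inequality $\expect|\sum Z_i|^q \le 2\sum\expect|Z_i|^q = 2m\|Z\|_q^q$ for $1\le q\le 2$ and mean-zero $Z_i$ (cited in the excerpt as \cite{vBEs65}, whose proof via the technique of \cite[Prop.~5.4]{He94} gives the constant $2$, hence $2^{2/q-1}$ after extracting $m^{-q}$ and taking $q$-th roots: $\|S_m\|_q \le (2m)^{1/q} m^{-1} \|Z\|_q = 2^{1/q} m^{1/q-1}\|Z\|_q$, and since $1/q \le 1$ we can write $2^{1/q} \le 2^{2/q-1}$? — check: $2^{2/q-1} \ge 2^{1/q} \iff 2/q-1 \ge 1/q \iff 1/q \ge 1$, true). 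So the $1<q\le2$ case reduces exactly to the von Bahr--Esseen inequality with constant $2$.

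For $2<q\le 4$: reduce to the fourth-moment case $q=4$ by interpolating the \emph{inequality} itself — concretely, having established $\|S_m\|_2 \le m^{-1/2}\|Z\|_2$ and $\|S_m\|_4 \le 3^{1/4} m^{-1/2}\|Z\|_4$, one does \emph{not} get the claim for intermediate $q$ for free because the target has $\|Z\|_q$ on the right. Instead the correct approach (this is the genuine content) is: prove directly, as in \cite[Prop.~5.4]{He94}, the bound $\expect|S_m|^q \le C_q\, m^{-q/2}\|Z\|_q^q$ for $2\le q\le 4$ by the following trick — let $Z'$ be an independent copy, write $Z = (Z-Z') + Z'$ is not quite it; rather, use that for the symmetrized variables $\eps_i(Z_i - Z_i')$ one has (by a conditioning and Khintchine-type argument, or by the inequality $\expect_\eps|\sum\eps_i a_i|^q \le (\sum a_i^2)^{q/2}$ times a constant for $q\ge2$) a bound in terms of $\sum|Z_i-Z_i'|^2$, then $\expect(\sum|Z_i-Z_i'|^2)^{q/2} \le m^{q/2}\cdot$ (average of $\|Z_i-Z_i'\|_q^2$-type terms) by Jensen in the form $(\tfrac1m\sum b_i)^{q/2} \le \tfrac1m\sum b_i^{q/2}$ since $q/2\ge1$, giving $\expect|\sum(Z_i-Z_i')|^q \le (\text{const})\, m^{q/2-1}\sum\expect|Z_i-Z_i'|^q \le (\text{const})\, m^{q/2}\, 2^q\|Z\|_q^q$; finally desymmetrize via $\|S_m\|_q \le 2\|\tfrac1m\sum(Z_i-Z_i')\|_q$ (since $Z$ is mean zero, $\expect' S_m' = 0$, so $\|S_m\|_q = \|\expect'(S_m - S_m')\|_q \le \|S_m-S_m'\|_q$, and $S_m - S_m' \sim \tfrac1m\sum\eps_i(Z_i-Z_i')$ in distribution). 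Tracking the constants through Khintchine (optimal constant $\sqrt{2}\,(\Gamma((q+1)/2)/\sqrt\pi)^{1/q}$, which for $q\le4$ is at most $\sqrt3$) yields exactly $(2\sqrt3)^{1-2/q}$ after combining with the $q=2$ endpoint by Riesz--Thorin on the \emph{linear} operator $(a_i)\mapsto\tfrac1m\sum\eps_i a_i$ — this last step is legitimate because that operator \emph{is} linear.

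\medskip
\noindent\textbf{Summary of the plan and the main obstacle.} The skeleton is: (i) $q=2$ endpoint by orthogonality; (ii) for $1<q<2$, cite/reprove von Bahr--Esseen ($\expect|\sum Z_i|^q\le 2\sum\expect|Z_i|^q$) via the Hilbert-space technique of \cite[Prop.~5.4]{He94}, then extract the rate; (iii) for $2<q\le4$, symmetrize, apply Khintchine conditionally on the $Z_i$, use Jensen to pull the $L_{q/2}$ norm of $\sum|Z_i-Z_i'|^2$ down to a sum, desymmetrize, and interpolate with the $q=2$ case using Riesz--Thorin on the linear randomized operator to get the sharp constant $(2\sqrt3)^{1-2/q}$. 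The main obstacle is bookkeeping the constants so that they come out as stated — in particular, getting $2^{2/q-1}$ rather than a weaker $2^{1/q}$-type bound in the sub-$L_2$ range, and verifying that Khintchine's constant for $2<q\le4$ is dominated by $\sqrt3$ so that the interpolation produces exactly $(2\sqrt3)^{1-2/q}$; the structural steps (symmetrization, conditioning, Jensen, desymmetrization) are all routine, but the claim to \enquote{slightly better constants than in the literature} rests entirely on this careful accounting.
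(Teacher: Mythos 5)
Your proposal rejects precisely the idea that makes the paper's proof work, and the substitute routes do not deliver the stated constants. You write that ``the map $Z\mapsto S_m$ is not linear in a way that lets one apply Riesz--Thorin directly'' --- but it is: realizing $Z=f(X)$ with $f\in L_q([0,1])$ and the copies as $f(x_i)$, the averaging map $T\colon L_q([0,1])\to L_q([0,1]^m)$, $f\mapsto \frac1m\sum_i f(x_i)$, \emph{is} linear in $f$, and so is the centering projection $P f = f-\Int(f)$. The paper interpolates the composition $TP$ between the three endpoints $\|TP\|_{L_1\to L_1}=2$ (triangle inequality plus $\|P\|_{L_1\to L_1}=2$), $\|TP\|_{L_2\to L_2}=m^{-1/2}$ (orthogonality), and $\|TP\|_{L_4\to L_4}\le \sqrt2\,3^{1/4}m^{-1/2}$ (your fourth-moment expansion, times $\|P\|_{L_4\to L_4}\le\sqrt2$); Riesz--Thorin then produces exactly $2^{2/q-1}$ and $(2\sqrt3)^{1-2/q}$. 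No symmetrization, Khintchine, or von Bahr--Esseen input is needed.

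The concrete failures in your plan: for $1<q\le 2$, von Bahr--Esseen with constant $2$ gives $\|S_m\|_q\le 2^{1/q}m^{1/q-1}\|Z\|_q$, and your check that $2^{1/q}\le 2^{2/q-1}$ is an arithmetic error --- it is equivalent to $1/q\ge 1$, i.e.\ $q\le 1$, which is false on the whole range; at $q=2$ you would get $\sqrt2$ where the lemma asserts $1$. (This is exactly the sense in which the paper's constants are ``slightly better than the literature'', so no citation of von Bahr--Esseen can close this.) For $2<q\le4$, the symmetrization--Khintchine--desymmetrization chain carries the factor $\|Z-Z'\|_q\le 2\|Z\|_q$ on top of the Khintchine constant, giving at $q=4$ a constant of at least $2\cdot3^{1/4}\approx 2.63$ versus the claimed $(2\sqrt3)^{1/2}\approx 1.86$; the assertion that careful bookkeeping ``yields exactly $(2\sqrt3)^{1-2/q}$'' is not substantiated and, for this route, appears false. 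Your first paragraph also correctly identifies, then fails to resolve, the obstruction that interpolating the \emph{random variable} $S_m$ between $\|S_m\|_2$ and $\|S_m\|_4$ leaves an uncontrolled $\|Z\|_4$ on the right; the resolution is to interpolate the \emph{operator}, which requires recognizing its linearity as above.
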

\begin{proof}
  We apply the Riesz-Thorin interpolation theorem
  to an operator~$T P$, where
  \begin{equation*}
    P : L_q([0,1]) \to L_q([0,1]), \quad
      f \mapsto f - \Int (f)
  \end{equation*}
  projects onto the subspace of mean zero functions,
  \begin{equation*}
    L_q^0([0,1]) := \{f \in L_q([0,1]) \mid \Int (f) = 0\} \,,
  \end{equation*}
  and
  \begin{equation*}
    T : L_q([0,1]) \to L_q([0,1]^m), \quad
      f(x) \mapsto \frac{1}{m} \sum_{i=1}^m f(x_i)\,.
  \end{equation*}
  Note that random variables~\mbox{$Z \in L_q$}
  correspond to functions~\mbox{$f \in L_q([0,1])$} and
  mean zero random variables correspond to functions
  from the space~\mbox{$L_q^0([0,1])$}.
  We are interested in the quantity
  \begin{equation*}
  M_q := \sup_{\substack{\|f\|_q \leq 1\\
      \Int(f) = 0}}
      \|T f\|_q
      = \|T\|_{L_q^0 \to L_q}
      \leq \sup_{\|f\|_q \leq 1}
      \|T P f\|_q
      = \|T P\|_{L_q \to L_q} \,.
  \end{equation*}
  Let~\mbox{$1 \leq p < r < q \leq \infty$}
  with~\mbox{$\frac{1}{r} = \frac{1-\lambda}{p} + \frac{\lambda}{q}$},
  where~\mbox{$0 < \lambda < 1$}.
  Then the Riesz-Thorin theorem states
  \begin{equation} \label{eq:Riesz-Thorin}
    \|T P\|_{L_r \to L_r}
      \leq \|T P\|_{L_p \to L_p}^{1-\lambda} \|T P\|_{L_q \to L_q}^{\lambda} \,,
  \end{equation}
  see~\cite[Chapter~4: Corollary~1.8]{BeSh88} for details
  and a more general formulation.
  Knowing the special cases
  \begin{equation*}
    \|P\|_{L_q \to L_q}
      = \begin{cases}
          2 \quad& \text{if $q \in \{1,\infty\}$,} \\
          1 \quad& \text{if $q = 2$,}
        \end{cases}
  \end{equation*}
  we can apply the Riesz-Thorin theorem also to $P$
  (similarly to \eqref{eq:Riesz-Thorin})
  and obtain
  \begin{equation} \label{eq:|P|}
    \|P\|_{L_q \to L_q}
      \leq 2^{|2/q-1|} \qquad \text{for\; $1 \leq q \leq \infty$} \,.
  \end{equation}
  
  It is an immediate consequence of the triangle inequality that
  \begin{equation} \label{eq:M_1}
    M_1 = \|T\|_{L_1^0 \to L_1} = 1\,.
  \end{equation}
  There is a direct way to estimate~$M_q$ for even $q\in\N$.
  We switch back to random variables.
  Let $X,X_i$ for $i=1,\ldots,m$ be i.i.d.\ random variables
  uniformly distributed in $[0,1]$
  so that \mbox{$Z_i := f(X_i) \sim f(X)$}
  are i.i.d.\ mean zero random variables
  for~\mbox{$f \in L_q^0([0,1])$}.
  Then
  \begin{equation} \label{eq:E|meanZ|^q,even}
    \expect\left|\frac{1}{m} \sum_{i=1}^m Z_i\right|^q
      = m^{-q} \sum_{\veci \in \{1,\ldots,m\}^q} \expect \prod_{k=1}^q Z_{i_k} \,.
  \end{equation}
  Observe
  that \mbox{$\expect \prod_{k=1}^q Z_{i_k}$} vanishes
  if there is at least one~\mbox{$i \in \{1,\ldots,m\}$}
  that occurs only once in~\mbox{$\veci = (i_1,\ldots,i_q)$}.
  (This is due to independence of the~$Z_i$ and~\mbox{$\expect Z_i = 0$}.)
  For all the other terms a generalized version of H\"older's inequality
  yields
  \begin{equation*}
    \expect \prod_{k=1}^q Z_{i_k} \leq \|Z\|_q^q \,.
  \end{equation*}
  It remains to count the terms that do not vanish, for 
  the two relevant special cases we obtain:
  \begin{description}
    \item[\normalfont{\mbox{$q=2$:}}]
      There are $m$ terms of the shape~$Z_i^2$.
    \item[\normalfont{\mbox{$q=4$:}}]  There are $m$ terms of the shape~$Z_i^4$,
      and \mbox{$3 \, m(m-1)$}~terms of the shape~\mbox{$Z_i^2 Z_j^2$}
      with distinct indices~\mbox{$i \not= j$}.
      Altogether these are less than \mbox{$3 \, m^2$}~terms.
  \end{description}
  Plugging this into~\eqref{eq:E|meanZ|^q,even}
  and taking the $q$-th root we have
  \begin{equation} \label{eq:M_2,4}
    M_2 = m^{-1/2} \,,
    \qquad \text{and} \qquad
    M_4 \leq 3^{1/4} \, m^{-1/2} \,.
  \end{equation}
  
  We interpolate the operator norm of $T P$ between the special values
  for which by~\eqref{eq:|P|} together with \eqref{eq:M_1} and \eqref{eq:M_2,4}
  we have
  \begin{equation*}
    \|T P\|_{L_1 \to L_1} = 2 \,, \quad
    \|T P\|_{L_2 \to L_2} = m^{-1/2} \,, \quad \text{and}\quad
    \|T P\|_{L_4 \to L_4} \leq \sqrt{2} \, 3^{1/4} \, m^{-1/2} \,.
  \end{equation*}
  Hence, by the Riesz-Thorin~theorem as stated in \eqref{eq:Riesz-Thorin},
  for~\mbox{$p = 1 < r < q = 2$}
  with \mbox{$\lambda = 1- 2/r$},
  and for~\mbox{$p = 2 < r < q = 4$} with~\mbox{$\lambda = 2 - 4/r$},
  we obtain the desired upper bounds on~$M_q$ for $1 < q \leq 4$.
\end{proof}

\subsection{Estimating central absolute moments}

We consider estimators for the (central absolute)
$p$-moment~\mbox{$\rho^p := \expect |Y - \expect Y|^p$}
of a random variable~$Y \in \Yinput_{p,q,K}$,
based on $m$~independent copies~$Y_1,\ldots,Y_m$ of $Y$
and their residuals,
\begin{equation} \label{eq:R_m}
  \widehat{R}_m
    := \frac{1}{m} \sum_{i = 1}^m
          |Y_i - \widehat{M}_m|^p \,,
  \qquad \text{where}\quad
  \widehat{M}_m
    := \frac{1}{m} \sum_{i = 1}^{m} Y_i \,.
\end{equation}
The first moment and the corresponding estimator, that is for $p=1$,
is also known as `mean absolute deviation', `average absolute deviation',
or `mean deviation from the mean'.
This and other \emph{measures of scale}
like `median absolute deviation from the median'
constitute alternatives to the widely known standard deviation
as an empirical quantification of the dispersion of a random variable.

\begin{lem} \label{lem:R_m-error}
  Let $Y \in \Yinput_{p,q,K}$ and~$\rho := \|Y-\expect Y\|_p$.
  \begin{enumerate}[(a)]
    \item \label{en: a_R_m-error}
      If $1 = p < q \leq 2$,
      then
      \begin{equation*}
        \|\widehat{R}_m - \rho\|_q
          \,\leq\, \frac{2^{2/q-1} \, (1+2K)}{m^{1-1/q}} \, \rho
          \,\leq\, \frac{6 \, K}{m^{1-1/q}} \, \rho \,.
      \end{equation*}
    \item \label{en: b_R_m-error}
      Let $1 \leq p \leq 2$, and $p < q \leq 2 p$.
      Then, with \mbox{$r := q/p$}, we have
      \begin{equation*}
        \|\widehat{R}_m - \rho^p\|_r \leq \frac{26 \, K^p}{m^{1-1/r}} \, \rho^p \,.
      \end{equation*}
  \end{enumerate}
\end{lem}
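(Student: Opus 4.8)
The plan is to bound $\|\widehat{R}_m - \rho^p\|_r$ (resp.\ $\|\widehat R_m - \rho\|_q$ in part (a), which is the case $p=1$, $r=q$) by splitting the error of the estimator into two contributions: one from replacing the true mean $\expect Y$ by the sample mean $\widehat M_m$ inside the absolute $p$-th powers, and one from the fluctuation of the empirical average of $|Y_i - \expect Y|^p$ around its expectation $\rho^p$. Concretely, write
\begin{equation*}
  \widehat{R}_m - \rho^p
    = \underbrace{\frac{1}{m}\sum_{i=1}^m \bigl(|Y_i - \widehat M_m|^p - |Y_i - \expect Y|^p\bigr)}_{=:E_1}
      + \underbrace{\frac{1}{m}\sum_{i=1}^m \bigl(|Y_i - \expect Y|^p - \rho^p\bigr)}_{=:E_2} \,,
\end{equation*}
and estimate $\|E_1\|_r$ and $\|E_2\|_r$ separately, then combine by the triangle inequality and absorb the resulting numerical constants into the stated $26$ (resp.\ the $2^{2/q-1}(1+2K)\le 6K$ in part (a)).

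For $E_2$: set $W_i := |Y_i-\expect Y|^p - \rho^p$; these are i.i.d.\ mean-zero random variables, and $E_2 = \frac1m\sum W_i$. Since $1\le p\le 2$ and $p<q\le 2p$, we have $r=q/p\in(1,2]$, so Lemma~\ref{lem:meanest} (the $1<q\le 2$ branch) applies with exponent $r$, giving $\|E_2\|_r \le 2^{2/r-1} m^{-(1-1/r)}\|W\|_r$. It remains to control $\|W\|_r = \bigl\|\,|Y-\expect Y|^p - \rho^p\,\bigr\|_r$ in terms of $\rho^p$: by the triangle inequality $\|W\|_r \le \bigl\|\,|Y-\expect Y|^p\,\bigr\|_r + \rho^p = \|Y-\expect Y\|_{pr}^p + \rho^p = \|Y-\expect Y\|_q^p + \rho^p \le (K^p+1)\rho^p$, using the cone condition $\|Y-\expect Y\|_q \le K\|Y-\expect Y\|_p = K\rho$. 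So $\|E_2\|_r \le 2^{2/r-1}(K^p+1) m^{-(1-1/r)}\rho^p$.

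For $E_1$: use the elementary inequality $\bigl|\,|a|^p - |b|^p\,\bigr| \le p\,|a-b|\,(|a|+|b|)^{p-1} \le p\,|a-b|\,(|a|^{p-1}+|b|^{p-1})$ valid for $1\le p\le 2$ (for $p=1$ it is just $\bigl||a|-|b|\bigr|\le|a-b|$, which is cleaner and yields part (a) with better constants). Applying this with $a = Y_i-\widehat M_m$, $b=Y_i-\expect Y$, so $a-b = \expect Y - \widehat M_m =: -\Delta$, we get $|E_1| \le p\,|\Delta|\cdot\frac1m\sum_i(|Y_i-\widehat M_m|^{p-1}+|Y_i-\expect Y|^{p-1})$. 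Now take $\|\cdot\|_r$ and use Hölder to split off $\|\Delta\|_{pr} = \|\Delta\|_q$; by Lemma~\ref{lem:meanest} applied to the mean-zero variable $Y-\expect Y$ in $L_q$ (here $q\le 2p\le 4$, so one of the two branches applies depending on whether $q\le 2$ or $q>2$), $\|\Delta\|_q = \bigl\|\frac1m\sum(Y_i-\expect Y)\bigr\|_q \le c_q\, m^{-(1-1/q)\vee(-1/2)}\|Y-\expect Y\|_q \le c_q K m^{-\alpha}\rho$ for a small constant $c_q$. The remaining factor $\frac1m\sum_i |Y_i-\expect Y|^{p-1}$ (and the analogous one with $\widehat M_m$, which one first relates back to the $\expect Y$-centred one, again paying a factor involving $\|\Delta\|$) has $\|\cdot\|$ of the right Hölder-conjugate exponent bounded by a constant times $\|Y-\expect Y\|_{\text{something}\le q}^{p-1} \le (K\rho)^{p-1}$, up to moment-interpolation (Lemma~\ref{lem:RVprq}). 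Multiplying the pieces produces a bound of the form $C K^p m^{-\alpha}\rho^p$ with $\alpha \ge 1-1/r$ — note the sample-mean deviation decays at least as fast as the $E_2$ term, so it does not worsen the rate — and $C$ absolute.

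\textbf{Main obstacle.} The delicate point is bookkeeping in the $E_1$ estimate: after introducing the sample mean $\widehat M_m$ one must re-center the auxiliary factors $\frac1m\sum_i |Y_i-\widehat M_m|^{p-1}$ back to $\expect Y$ (another application of the $p=1$-type Lipschitz bound for the function $t\mapsto |t|^{p-1}$, which for $1\le p\le 2$ is only Hölder-$(p-1)$, not Lipschitz — so one instead bounds $\bigl||a|^{p-1}-|b|^{p-1}\bigr| \le |a-b|^{p-1}$), controlling every stray $\|\widehat M_m - \expect Y\|$ via Lemma~\ref{lem:meanest}, and then verifying that all the exponents of $m$ that appear are $\le -(1-1/r)$ so that the claimed rate $m^{-(1-1/r)}$ holds with the worst one dominating. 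Tracking the $K$-powers (everything should collapse to $K^p$ via the single use of the cone inequality and moment interpolation) and keeping the accumulated numerical constant below $26$ is routine but is where the actual work lies; part (a) is the clean special case $p=1$ where the Hölder-$(p-1)$ subtlety disappears entirely and one reads off the sharper constant $2^{2/q-1}(1+2K)$.
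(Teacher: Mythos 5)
Your decomposition into the re-centering error $E_1$ and the empirical fluctuation $E_2$, the use of Lemma~\ref{lem:meanest} for both $E_2$ and the sample-mean deviation, the H\"older step to split off $\|\widehat{M}_m - \expect Y\|_q$, and the single application of the cone condition are exactly the paper's proof, and the outline is correct (the exponent checks $\min\{1-1/q,1/2\}\ge 1-1/r$ indeed go through under $q\le 2p$). The only real difference is the pointwise inequality in part (b): the paper uses $|x|^p - p\,|x|^{p-1}|y| \le |x-y|^p \le |x|^p + p\,|2x|^{p-1}|y| + |2y|^p$ with $x = Y_i - \expect Y$ and $y = \widehat{M}_m - \expect Y$, whose upper bound involves only quantities centred at the true mean, so the re-centering of $|Y_i - \widehat{M}_m|^{p-1}$ that you flag as the main obstacle never arises (though your fix via $\bigl||a|^{p-1}-|b|^{p-1}\bigr|\le |a-b|^{p-1}$ would also work and yields comparable constants).
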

\begin{proof}
  To shorten the notation, define $a := \expect Y$.
  
  We start with proving \eqref{en: a_R_m-error}.\;
%  From the triangle inequality it follows that
  \checked{Via the triangle inequality,
    we separate the inaccuracy of the empirical mean $\widehat{M}_m$
    from the deviation of a single observation~$Y_i$,}
  \begin{equation*}
    |Y_i - a| - |\widehat{M}_m - a|
      \,\leq\, |Y_i - \widehat{M}_m|
      \,\leq\, |Y_i - a| + |\widehat{M}_m - a| \,.
  \end{equation*}
  Hence, for the estimator~\eqref{eq:R_m} we can write
  \begin{equation*}
    \widehat{R}_m
      = \underbrace{\frac{1}{m} \sum_{i=1}^m |Y_i - a|}_{=: \widehat{\rho}_m}
          + r_m
  \end{equation*}
  %where
  \checked{with bounded remainder}~$|r_m| \leq |\widehat{M}_m - a|$.
%  With the triangle inequality we obtain
  \checked{For the $L_q$-norm this gives}
  \begin{align*}
    \|\widehat{R}_m - \rho\|_q
      &\,\leq\, \|\widehat{\rho}_m - \rho\|_q + \|r_m\|_q
      \,\leq\, \|\widehat{\rho}_m - \rho\|_q + \|\widehat{M}_m - a\|_q \,.
   \intertext{%
%        \end{align*}
   By Lemma~\ref{lem:meanest} we conclude}
%        \begin{align*}
    \|\widehat{R}_m - \rho\|_q
      &\,\leq\, \frac{2^{2/q - 1}}{m^{1-1/q}}
      \, \left(\||Y-a|-\rho\|_q + \|Y-a\|_q\right) \\
      &\,\leq\, \frac{2^{2/q - 1}}{m^{1-1/q}} \, \left(\rho + 2 \, \|Y-a\|_q\right)
      \,\leq\, \frac{2^{2/q - 1} \, (1 + 2K)}{m^{1-1/q}} \, \rho \,.
  \end{align*}
  This finishes the proof of \eqref{en: a_R_m-error}. 

  We turn to \eqref{en: b_R_m-error}.
  First note that, for $x,y \in \R$ and $p > 1$, we have
  \begin{equation} \label{eq:|x-y|^p}
    |x|^p - p \, |x|^{p-1} |y|
      \,\leq\, |x - y|^p
      \,\leq\, |x|^p + p \, |2x|^{p-1} |y| + |2y|^p \,,
  \end{equation}
        where the second inequality can be proven by distinguishing 
        the cases $|y| < |x|$ and $|y| \leq |x|$ with subcases
        $| x |\leq | x-y |$ and
        $| x | >| x-y |$.
%         
%         Daniel: Dem Leser ueberlassen.
%         
%   Indeed, concerning the second inequality,
%   the case~$|y| \leq |x|$ is trapped by the first two terms,
%   where we use that the Lipschitz constant of the function~\mbox{$y \mapsto |x-y|^p$}
%   is bounded by~\mbox{$p \, |2x|^{p-1}$} within that range.
%   The case~\mbox{$|y| \geq |x|$} is trapped solely by the third term.
%   The first inequality is somewhat easier.
  We use \eqref{eq:|x-y|^p} with~\mbox{$x = Y_i - a$} and \mbox{$y = \widehat{M}_m - a$}.
  Hence
  \begin{equation*}
    \widehat{R}_m
      \,=\, \underbrace{\frac{1}{m} \sum_{i=1}^m |Y_i - a|^p
                      }_{=:\widehat{\rho}_m^p}
              + r_m \,,
  \end{equation*}
   with \checked{the remainder~$r_m$ bounded by}
  \begin{equation*}
    |r_m|
      \,\leq\, \frac{2^{p-1} \, p}{m}
                  \sum_{i=1}^m
                    |Y_i - a|^{p-1} \, |\widehat{M}_m - a|
                  + 2^p \, |\widehat{M}_m - a|^p \,.
  \end{equation*}
  By the triangle inequality we obtain \checked{the norm estimate}
  \begin{multline*}
    \|\widehat{R}_m - \rho^p\|_r
      \,\leq\, \|\widehat{\rho}_m^p - \rho^p\|_r + \|r_m\|_r \\
      \,\leq\, \|\widehat{\rho}_m^p - \rho^p\|_r
                  + \frac{2^{p-1} \, p}{m}
                      \sum_{i=1}^m
                        \| |Y_i - a|^{p-1} \, |\widehat{M}_m - a| \|_r
                  + 2^p \, \| |\widehat{M}_m - a|^p \|_r \,.
  \end{multline*}
  \checked{We aim to control the norm of the product term.}
  For $L_q$-integrable non-negative random variables~$X,Z$,
  from H\"older's inequality, and recalling $pr = q$, we have
  \begin{equation*}
    \expect X^{r(p-1)} \, Z^r
      \leq \|X^{r(p-1)}\|_{p/(p-1)} \, \|Z^r\|_p
      \leq \|X\|_q^{r (p - 1)} \, \|Z\|_q^{r} \,.
  \end{equation*}
  \checked{Taking} \mbox{$X = |Y_i - a|$} and \mbox{$Z = |\widehat{M}_m - a|$},
  this leads to
  \begin{align*}
    \|\widehat{R}_m - \rho^p\|_r
      &\,\leq\, \|\widehat{\rho}_m^p - \rho^p\|_r
      + 2^{p-1} \, p \, \|Y - a\|_q^{p-1} \, \|\widehat{M}_m - a\|_q
      + 2^p \, \| \widehat{M}_m - a \|_q^p \,. 
  \end{align*}
  Finally, Lemma~\ref{lem:meanest} gives
  \begin{align*}
    &\|\widehat{R}_m - \rho^p\|_r \\
          &\,\leq\, 2^{2/r - 1} \, m^{-(1-1/r)} \, \| |Y - a|^p - \rho^p \|_r\\
          &\qquad    + \|Y - a\|_q^p
                %\left.
        \begin{cases}
        2^{p + 2/q - 2} \, p \, m^{-(1-1/q)} 
          + 2^{2/r} \, m^{-p(1-1/q)}
          \quad& \text{if $q \leq 2$,} \\
        2^{p - 1} \, p \, (2 \sqrt{3})^{1-2/q} \, m^{-1/2} \, 
        + 2^p \, (2 \sqrt{3})^{p(1-2/q)} \, m^{-p/2}
        \quad& \text{if $2 < q \leq 4$,}
        \end{cases}
        %\right\} 
        \\
    &\,\leq\, (2 \, \rho^p + 24 \, \|Y - a\|_q^p) \, m^{-(1-1/r)} 
    \,\leq\, \frac{26 \, K^p}{m^{1-1/r}} \, \rho^p \,,
  \end{align*}
  which finishes the proof.
\end{proof}

\checked{Combining} %Using
Markov's inequality
\checked{with the above result},
we obtain %the following
\checked{probabilistic bounds for the error of the empirical $p$-moment estimator}. 

\begin{lem} \label{lem:|R_m-rho|<}
  Let $Y \in \Yinput_{p,q,K}$
  with~$\rho := \|Y-\expect Y\|_p$,
  and let $0 < \alpha < 1$.
  \begin{enumerate}[(a)]
    \item If $1 = p < q \leq 2$, then
%     for the estimator~\eqref{eq:R_m} we have
      \begin{equation*}
        \P\left\{|\widehat{R}_m - \rho|
                    \leq \frac{2^{2/q - 1} \, (1+2K)}{\alpha^{1/q} \, m^{1-1/q}} \, \rho
          \right\}
            \,\geq\, 1 - \alpha \,.
      \end{equation*}
    \item In the case $1 \leq p \leq 2$, and $p < q \leq 2p$,
%       for the estimator~\eqref{eq:R_m} 
      we have
      \begin{equation*}
        \P\left\{|\widehat{R}_m - \rho^p|
                    \leq \frac{26 \, K^p}{\alpha^{p/q} \, m^{1-p/q}} \, \rho^p
          \right\}
            \,\geq\, 1 - \alpha \,.
      \end{equation*}
  \end{enumerate}
\end{lem}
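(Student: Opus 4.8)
The plan is to deduce Lemma~\ref{lem:|R_m-rho|<} directly from Lemma~\ref{lem:R_m-error} by an application of Markov's inequality to the appropriate $L_r$-norm of the estimation error. The two parts correspond exactly to parts \eqref{en: a_R_m-error} and \eqref{en: b_R_m-error} of the previous lemma, so the argument is the same in both cases up to bookkeeping of exponents.

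First I would treat part (a). Here $1 = p < q \leq 2$, and Lemma~\ref{lem:R_m-error}\eqref{en: a_R_m-error} gives $\|\widehat{R}_m - \rho\|_q \leq \frac{2^{2/q-1}(1+2K)}{m^{1-1/q}}\,\rho =: B$. By Markov's inequality applied to the non-negative random variable $|\widehat{R}_m - \rho|^q$, for any threshold $t > 0$ we have $\P\{|\widehat{R}_m - \rho| > t\} = \P\{|\widehat{R}_m - \rho|^q > t^q\} \leq t^{-q}\,\expect|\widehat{R}_m - \rho|^q = t^{-q} \|\widehat{R}_m - \rho\|_q^q \leq (B/t)^q$. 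Choosing $t := \alpha^{-1/q} B$ makes the right-hand side equal to $\alpha$, which yields $\P\{|\widehat{R}_m - \rho| \leq \alpha^{-1/q} B\} \geq 1 - \alpha$; substituting the value of $B$ gives precisely the claimed bound, since $1 - 1/q = 1 - p/q$ when $p = 1$.

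For part (b), with $1 \leq p \leq 2$ and $p < q \leq 2p$, set $r := q/p > 1$ as in Lemma~\ref{lem:R_m-error}\eqref{en: b_R_m-error}, which gives $\|\widehat{R}_m - \rho^p\|_r \leq \frac{26\,K^p}{m^{1-1/r}}\,\rho^p =: B'$. The very same Markov argument applied to $|\widehat{R}_m - \rho^p|^r$ yields $\P\{|\widehat{R}_m - \rho^p| > t\} \leq (B'/t)^r$; picking $t := \alpha^{-1/r} B'$ and noting $1/r = p/q$ (so that $1 - 1/r = 1 - p/q$ and $\alpha^{-1/r} = \alpha^{-p/q}$) gives exactly the asserted inequality $\P\{|\widehat{R}_m - \rho^p| \leq \frac{26\,K^p}{\alpha^{p/q} m^{1-p/q}}\,\rho^p\} \geq 1 - \alpha$.

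There is essentially no obstacle here: the content is entirely in Lemma~\ref{lem:R_m-error}, and the remaining work is the standard tail-bound-from-moment-bound step plus matching up the exponents $1-1/q$ versus $1-p/q$ and $1/r$ versus $p/q$. The only thing to be careful about is to state the Markov step in the form $\P\{|X| > t\} \le t^{-r}\expect|X|^r$ (i.e.\ raise to the $r$-th power before applying Markov) rather than applying Markov naively to $|X|$, since it is the $L_r$-norm that Lemma~\ref{lem:R_m-error} controls.
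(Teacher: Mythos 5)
Your proposal is correct and matches the paper's (very brief) argument exactly: the paper likewise obtains Lemma~\ref{lem:|R_m-rho|<} by combining Markov's inequality with the $L_q$- respectively $L_r$-norm bounds of Lemma~\ref{lem:R_m-error}. Your explicit choice of threshold $t = \alpha^{-1/q}B$ (resp.\ $t = \alpha^{-1/r}B'$) and the exponent bookkeeping $1/r = p/q$ are all as intended.
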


From this, via Proposition~\ref{prop:med},
we derive the following probabilistic \checked{guarantees}
for the modified median-of-empirical-moments estimator~$\widehat{R}_{k,m}$
from Algorithm~\ref{alg:A_kpmksh},
which exhibit enhanced confidence levels.

\begin{prop} \label{prop:|R_km-rho|<}
  Let $Y \in \Yinput_{p,q,K}$
  with~$\rho := \|Y-\expect Y\|_p$,
  and let $0 < \alpha < 1/2$.
  \begin{enumerate}[(a)]
    \item If $1 = p < q \leq 2$,
      then for the first moment estimator from Algorithm~\ref{alg:A_kpmksh} we have
      \begin{equation*}
        \P\left\{|\widehat{R}_{k,m} - \rho|
                    \leq \frac{2^{2/q - 1} \, (1+2K)}{\alpha^{1/q} \, m^{1-1/q}} \, \rho
          \right\}
            \,\geq\,  1 - \frac{1}{2} (4 \alpha(1-\alpha))^{k/2} \,.
      \end{equation*}
    \item If $1 \leq p \leq 2$ and $p < q \leq 2p$, then
      for the $p$-moment estimator from Algorithm~\ref{alg:A_kpmksh} we have
      \begin{equation*}
        \P\left\{|\widehat{R}_{k,m} - \rho^p|
                    \leq \frac{26 \, K^p}{\alpha^{p/q} \, m^{1-p/q}} \, \rho^p
          \right\}
            \,\geq\,  1 - \frac{1}{2} (4 \alpha(1-\alpha))^{k/2} \,.
      \end{equation*}
  \end{enumerate}
\end{prop}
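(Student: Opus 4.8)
The plan is to deduce this directly from the single-sample bounds in Lemma~\ref{lem:|R_m-rho|<} by invoking the median trick, Proposition~\ref{prop:med}. First I would observe that the $k$ estimators $\widehat{R}_m^{(1)},\ldots,\widehat{R}_m^{(k)}$ appearing in Algorithm~\ref{alg:A_kpmksh} are built from pairwise disjoint blocks of the i.i.d.\ copies $Y_1,\ldots,Y_{km}$ of $Y$, and hence are themselves i.i.d., each $\widehat{R}_m^{(\ell)}$ having the same distribution as the generic estimator $\widehat{R}_m$ from~\eqref{eq:R_m} with sample size $m$.

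For part~(a), I would fix $\theta := \rho$ and the interval $I := [\,\rho - c\,\rho,\ \rho + c\,\rho\,]$ with $c := 2^{2/q-1}(1+2K)\,\alpha^{-1/q}\,m^{-(1-1/q)}$. Then the event $\{\,|\widehat{R}_m^{(\ell)} - \rho| \leq c\,\rho\,\}$ is precisely $\{\widehat{R}_m^{(\ell)} \in I\}$, and Lemma~\ref{lem:|R_m-rho|<}(a) supplies $\P\{\widehat{R}_m^{(\ell)} \in I\} \geq 1-\alpha$ for the given $\alpha \in (0,1/2)$. Applying Proposition~\ref{prop:med} to the i.i.d.\ sequence $(\widehat{R}_m^{(\ell)})_{\ell}$ and the odd number $k$ then gives $\P\{\widehat{R}_{k,m}\in I\} \geq 1-\tfrac{1}{2}(4\alpha(1-\alpha))^{k/2}$, which is the asserted bound. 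For part~(b) the argument is verbatim the same, now with $\theta := \rho^p$, interval $I := [\,\rho^p - c\,\rho^p,\ \rho^p + c\,\rho^p\,]$, and $c := 26\,K^p\,\alpha^{-p/q}\,m^{-(1-p/q)}$, using Lemma~\ref{lem:|R_m-rho|<}(b) for the single-sample confidence level.

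There is essentially no genuine obstacle here; the proof is a short bookkeeping argument. The only points requiring a word of care are (i) verifying that the block estimators are i.i.d., so that the hypothesis of Proposition~\ref{prop:med} is met, and (ii) noting that the tolerated-deviation sets $\{|x-\theta|\le c\}$ are indeed intervals as that proposition demands. The restriction $\alpha<1/2$ is inherited directly from the median trick.
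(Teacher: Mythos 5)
Your argument is exactly the route the paper takes: the proposition is obtained by feeding the single-block confidence bound of Lemma~\ref{lem:|R_m-rho|<} into the median trick of Proposition~\ref{prop:med}, applied to the i.i.d.\ block estimators $\widehat{R}_m^{(1)},\ldots,\widehat{R}_m^{(k)}$ with the interval $I$ centered at $\rho$ (resp.\ $\rho^p$). The bookkeeping points you flag (independence of the blocks, $I$ being an interval, $\alpha<1/2$ inherited from the median trick) are precisely what is needed, and nothing is missing.
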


\begin{rem}[Choice of the parameters for the first stage]
  \label{rem:mk-choice}
  We need to control
  how badly we underestimate the $p$-moment of~$Y \in \Yinput_{p,q,K}$
  by the estimator~$\widehat{R}_{k,m}$. We consider either $1=p<q\leq2$ or $1\leq p\leq 2$
  and $p<q\leq 2p$.
  For an overall confidence level $1-\delta$ of the error of the algorithm
  we need
  a $p$-moment estimate that satisfies
  \begin{equation} \label{eq:R_km>wish}
    \P\left\{\widehat{R}_{k,m} \geq (1-\gamma) \, \rho^p \right\}
      \geq 1 - \delta_1
  \end{equation}
  for some~$0 < \gamma < 1$ and $0 < \delta_1 < \delta$.
  This situation can be established
  by the use of Proposition~\ref{prop:|R_km-rho|<}:
  \begin{itemize}
    \item Fix some \mbox{$0 < \alpha_1 < 1/2$}.
    \item Choose an odd
      \mbox{$k \geq 2 \log (2\delta_1)^{-1}
                        / \log (4 \alpha_1 (1-\alpha_1))^{-1}$},
      in order to guarantee the correct confidence level.
      For \mbox{$\alpha_1 = 1/4$} and~\mbox{$\delta_1 = \delta/2$},
      this simplifies to
      \mbox{$k \geq 2 \log \delta^{-1} / \log \frac{4}{3}$}.
    \item Choose 
      \begin{equation*}
        m \geq \begin{cases}
                  \displaystyle
                  \frac{2^{1/(q-1) - 1} \, (1 + 2K)^{1 + 1/(q-1)}
                        }{\gamma^{1 + 1/(q-1)}  \, \alpha_1^{1/(q-1)}}
                    &\quad \text{if $p=1 < q \leq 2$,}
                    \vspace{5pt}\\
                  \displaystyle
                  \frac{26^{1 + p/(q-p)} \, K^{pq/(q-p)}
                        }{\gamma^{1 + p/(q-p)} \, \alpha_1^{p/(q-p)}}
                    &\quad \text{if $1 < p \leq 2$ and $p < q \leq 2p$,}
                \end{cases}
      \end{equation*}
      in order to describe the correct event.
      With~$\alpha_1 = 1/4$ and $\gamma = 1/2$,
      and with further simplifications,
      it is sufficient to choose
      \begin{equation*}
        m \geq \begin{cases}
                  3 \cdot 48^{1/(q-1)} \, K^{1 + 1/(q-1)}
                    &\quad \text{if $p=1 < q \leq 2$,} \\
                  52 \cdot 208^{p/(q-p)} \, K^{pq/(q-p)}
                    &\quad \text{if $1 < p \leq 2$ and $p < q \leq 2p$.}
                \end{cases}
      \end{equation*}
  \end{itemize}
  The cost for estimating the $p$-moment is determined
  by the product~\mbox{$n_1 = k \, m$}.
  One can optimize the choice of $\alpha_1$ numerically in order to minimize this product.
  Such optimization depends on the ratio $q/p$.
  The optimal $\alpha_1$ approaches~$1/2$ as the ratio $q/p$ approaches~$1$.
  However, the choice $\alpha_1 = 1/4$ is sufficient.
  The parameters~$\gamma$ and $\delta_1$ affect the relative precision
  and the uncertainty at which we estimate the $p$-moment.
  This has consequences for the sample size needed in the second stage
  of the algorithm, see Section~\ref{sec:n_2}.
\end{rem}

\subsection{Determining the sample size}
\label{sec:n_2}

Given $1 \leq p < q \leq \infty$, we put $\wt{q} := \min\{q,2\}$.
For $Y \in \Yinput_{p,q,K}$
we define \mbox{$\tau := \|Y - \expect Y\|_{\wt{q}}$},
which is the norm of interest when determining the required sample size
for estimating the mean.
We start with considerations for mean estimators
assuming that~$\tau$ is known.
Then, fixing $m' \in \N$, for
\begin{equation*}
  \widehat{M}_{m'}
    := \frac{1}{m'} \sum_{i = 1}^{m'} Y_i \,,
    \qquad Y_i \stackrel{\text{iid}}{\sim} Y ,
\end{equation*}
Markov's inequality and Lemma~\ref{lem:meanest}
immediately imply the following.
\begin{lem} \label{lem:meandev}
  Let $Y \in L_{\wt{q}}$ 
  with $1 < {\wt{q}} \leq 2$ and $\tau := \|Y - \expect Y\|_{\wt{q}}$.
  Then we have for $0 < \alpha < 1$ that
  \begin{equation*}
    \P\left\{ |\widehat{M}_{m'} - \expect Y| \leq \frac{2^{2/{\wt{q}} - 1} \, \tau
                  }{\alpha^{1/{\wt{q}}} \, (m')^{1-1/{\wt{q}}}}
      \right\}
        \geq 1 - \alpha \,.
  \end{equation*}
\end{lem}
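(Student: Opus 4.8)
The plan is to reduce the statement to a one-line combination of the $L_{\wt q}$-norm bound from Lemma~\ref{lem:meanest} with Markov's inequality applied to the $\wt q$-th power of the deviation. First I would center everything: set $Z := Y - \expect Y$ and $Z_i := Y_i - \expect Y$, so that the $Z_i$ are i.i.d.\ mean zero copies of $Z$ with $\|Z\|_{\wt q} = \tau$ and $\widehat{M}_{m'} - \expect Y = \frac{1}{m'} \sum_{i=1}^{m'} Z_i$. Since $1 < \wt q \leq 2$, the first branch of Lemma~\ref{lem:meanest} (with $q$ there equal to $\wt q$ and $m = m'$) applies and gives
\begin{equation*}
  \bigl\|\widehat{M}_{m'} - \expect Y\bigr\|_{\wt q}
    = \left\|\frac{1}{m'}\sum_{i=1}^{m'} Z_i\right\|_{\wt q}
    \leq 2^{2/\wt q - 1} \, (m')^{-(1-1/\wt q)} \, \tau \,.
\end{equation*}

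Next I would invoke Markov's inequality for the nonnegative random variable $|\widehat{M}_{m'} - \expect Y|^{\wt q}$: for any threshold $t > 0$,
\begin{equation*}
  \P\bigl\{|\widehat{M}_{m'} - \expect Y| > t\bigr\}
    \,\leq\, \frac{\expect |\widehat{M}_{m'} - \expect Y|^{\wt q}}{t^{\wt q}}
    \,=\, \frac{\|\widehat{M}_{m'} - \expect Y\|_{\wt q}^{\wt q}}{t^{\wt q}} \,.
\end{equation*}
Plugging in the norm bound above and choosing $t := 2^{2/\wt q - 1} \, \tau \, / \, (\alpha^{1/\wt q} \, (m')^{1-1/\wt q})$ makes the right-hand side at most $\alpha$: raising to the $\wt q$-th power, the factor $(2^{2/\wt q-1}\tau)^{\wt q}$ cancels and $(m')^{-(1-1/\wt q)\wt q} = (m')^{-(\wt q-1)}$ is exactly cancelled by the $(m')^{\wt q-1}$ produced by $t^{-\wt q}$, leaving only the $\alpha$ from the denominator. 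Passing to the complementary event gives the claimed inequality.

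There is essentially no real obstacle here; the statement is designed to be an immediate corollary. The only points deserving a moment's attention are: checking that the exponents of $m'$ match up when taking $\wt q$-th powers (as above), observing that the hypothesis $1 < \wt q \le 2$ is precisely what makes the relevant case of Lemma~\ref{lem:meanest} available, and noting that $0 < \alpha < 1$ is used only so that the tolerance on the right-hand side stays finite and the probability bound is non-vacuous.
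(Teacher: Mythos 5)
Your proposal is correct and is exactly the paper's argument: the paper states that Lemma~\ref{lem:meandev} follows immediately from Lemma~\ref{lem:meanest} combined with Markov's inequality applied to $|\widehat{M}_{m'}-\expect Y|^{\wt q}$, which is precisely the reduction you carry out. The exponent bookkeeping in your choice of threshold $t$ checks out, so nothing is missing.
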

As a consequence, for the median-of-means estimator
\begin{equation*}
  \widehat{M}_{k',m'}
    := \median(\widehat{M}_{m'}^{(1)},\ldots,\widehat{M}_{m'}^{(k')})
  \qquad\text{where}\quad
  \widehat{M}_{m'}^{(\ell)}
    := \frac{1}{m'} \sum_{i = (\ell-1) m' + 1}^{\ell m'} Y_i  \,,
\end{equation*}
with fixed odd number $k'$,
via Proposition~\ref{prop:med} we obtain 
probabilistic \checked{error} bounds with enhanced confidence level.
A similar result for the median-of-means estimator
for $Y \in L_q$
with $1<q<2$
is obtained by Bubeck et al.\ in \cite[Lemma~2]{BC-BL13}.
The constants we get turn out to be slightly better.
\begin{prop} \label{prop:M_km-prob}
Let $Y \in L_{\wt{q}}$ with $1<\wt{q}\leq 2$ and 
$\tau := \|Y - \expect Y\|_{\wt{q}}$.
Then we have for $0 < \alpha < 1/2$ that
  \begin{equation*}
    \P\left\{ |\widehat{M}_{k',m'} - \expect Y| \leq \frac{2^{2/\wt{q} - 1} \, \tau
                }{\alpha^{1/\wt{q}} \, (m')^{1-1/\wt{q}}}
      \right\}
        \geq 1 - \frac{1}{2} (4 \alpha(1-\alpha))^{k'/2} \,.
  \end{equation*}
\end{prop}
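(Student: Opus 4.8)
The plan is to derive Proposition~\ref{prop:M_km-prob} directly from Lemma~\ref{lem:meandev} by an application of the median trick, Proposition~\ref{prop:med}. This is the same two-line pattern already used to pass from Lemma~\ref{lem:|R_m-rho|<} to Proposition~\ref{prop:|R_km-rho|<}, so there is essentially no new obstacle here; the work is just to verify that the hypotheses line up.

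First I would fix $Y \in L_{\wt{q}}$ with $1 < \wt{q} \le 2$, set $\tau := \|Y - \expect Y\|_{\wt{q}}$, and let $0 < \alpha < 1/2$. Consider the interval
\begin{equation*}
  I := \left[\expect Y - \frac{2^{2/\wt{q}-1}\,\tau}{\alpha^{1/\wt{q}}\,(m')^{1-1/\wt{q}}},\;
             \expect Y + \frac{2^{2/\wt{q}-1}\,\tau}{\alpha^{1/\wt{q}}\,(m')^{1-1/\wt{q}}}\right] .
\end{equation*}
The estimators $\widehat{M}_{m'}^{(1)},\ldots,\widehat{M}_{m'}^{(k')}$ are i.i.d.\ copies of the sample mean $\widehat{M}_{m'}$ of Lemma~\ref{lem:meandev} (they are built from disjoint blocks of the i.i.d.\ stream $Y_1,Y_2,\ldots$, hence independent and identically distributed). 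By Lemma~\ref{lem:meandev}, each satisfies $\P\{\widehat{M}_{m'}^{(\ell)} \in I\} \ge 1 - \alpha$.

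Next I would simply invoke Proposition~\ref{prop:med} with these $k'$ estimators, the interval $I$, and the parameter $\alpha \in (0,1/2)$. It yields
\begin{equation*}
  \P\{\widehat{M}_{k',m'} \in I\} \;\ge\; 1 - \tfrac{1}{2}\,(4\alpha(1-\alpha))^{k'/2},
\end{equation*}
which, unwinding the definition of $I$, is exactly the claimed bound. The only small point worth stating explicitly is that $k'$ is odd, so the median $\widehat{M}_{k',m'}$ is well defined, as already required in Algorithm~\ref{alg:A_kpmksh}.

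There is no real difficulty in this argument; if anything, the ``hard part'' is the bookkeeping observation that the blockwise construction in the definition of $\widehat{M}_{k',m'}$ genuinely produces i.i.d.\ copies satisfying the hypothesis of Proposition~\ref{prop:med} — but this is immediate from the independence of the underlying sample stream. One could even state the proof in a single sentence: apply Proposition~\ref{prop:med} to the i.i.d.\ estimators from Lemma~\ref{lem:meandev}.
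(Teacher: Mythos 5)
Your proposal is correct and coincides with the paper's own argument: the paper obtains Proposition~\ref{prop:M_km-prob} precisely by applying the median trick (Proposition~\ref{prop:med}) to the i.i.d.\ block means, each of which satisfies the confidence bound of Lemma~\ref{lem:meandev}. Nothing is missing.
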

If $\tau$ is known, then any choice
\begin{equation} \label{eq:m'(tau)}
  m' \geq \frac{2^{1/(\wt{q}-1)-1}}{\alpha^{1/(\wt{q}-1)}}
                      \, \left(\frac{\tau}{\eps}\right)^{1 + 1/(\wt{q}-1)}
\end{equation}
would give probabilistic bounds
for an absolute error~$\eps > 0$,
where the confidence level is determined by~$\alpha$ and $k'$.
Unfortunately, we do not know $\tau$
but rely on estimates of the central $L_p$-norm
\mbox{$\varrho = \|Y - \expect Y\|_p$} for $Y \in \Yinput_{p,q,K}$.
In the case $p = \wt{q} = 2$,
this is directly
the norm of interest $\varrho = \tau$.
If however \mbox{$1 \leq p < 2$},
we exploit the cone condition
\begin{equation*} % \label{eq: K'}
  \tau = \|Y - \expect Y\|_{\wt{q}}
       \leq K' \|Y - \expect Y\|_{p}
       = K' \rho
  \quad\text{where}\;
  K' := \begin{cases}
          K \quad&\text{if $q \leq 2$,} \\
          K^{\frac{q(2-p)}{2(q-p)}} \quad&\text{if $q > 2$,}
        \end{cases}
\end{equation*}
see Corollary~\ref{cor:pq-cone_subset}
for the inclusion~\mbox{$\Yinput_{p,q,K} \subset \Yinput_{p,2,K'}$}
in the case $q > 2 = \wt{q}$.
Remark~\ref{rem:mk-choice} provides parameter settings
for the $p$-moment estimator~$\widehat{R}_{k,m}$
%of Algorithm~\ref{alg:A_kpmksh}
such that
\begin{equation*}
 \P\Big\{ \rho^p \leq \widehat R_{k,m}/(1-\gamma) \Big\}
   \geq 1-\delta_1 \,.
\end{equation*}
If for the median-of-means estimator~\mbox{$\wt{M}_{k',m'}$}
of Algorithm~\ref{alg:A_kpmksh} we have that
\begin{equation} \label{eq:mean<wish}
  \P\left(|\wt{M}_{k',m'} - \expect Y|
            \leq \eps
          \,\middle|\, \rho^p \leq \widehat{R}_{k,m}/(1-\gamma)
    \right)
      \,\geq\, 1 - \delta_2 \,,
\end{equation}
with $\delta_1+\delta_2=\delta$, then
\begin{equation*}
  \P\{ | A_{k,p,m}^{k',s,\eta}(Y)-\expect Y | \leq \eps \}
    \,\geq\, (1 - \delta_1)(1 - \delta_2)
    \,\geq\, 1 - (\delta_1 + \delta_2)
    \,=\, 1 - \delta \,.
\end{equation*}
%% Einschub
  Hoping that \mbox{$\rho^p \leq \widehat{R}_{k,m}/(1-\gamma)$}
  holds indeed,
  the parameter~$m'$
  in %the estimator
  $\widetilde{M}_{k',m'}$ %from Algorithm~\ref{alg:A_kpmksh}
  can be chosen as in~\eqref{eq:m'(tau)},
  just replacing $\tau$ by
  \begin{equation} \label{eq:>tau}
    K'\cdot(\widehat{R}_{k,m}/(1-\gamma))^{1/p} \geq \tau \,.
  \end{equation}
  Hence, $m'$ is not a fixed number anymore
  but depends on the $p$-moment estimate.
  Still, since the conditional distribution of~$\wt{M}_{k',m'}$
  after the $p$-moment estimation is equal to the distribution of 
  $\widehat{M}_{k',m'}$ with the respective fixed $m'$,
  Proposition~\ref{prop:M_km-prob} may be used for the conditional case
  in order to obtain~\eqref{eq:mean<wish}.
  The consequential choice of the parameters for the second stage
  of Algorithm~\ref{alg:A_kpmksh} is summarized in the following remark.

\begin{rem}[Choice of the parameters for the second stage]
  \label{rem:eta-choice}
  Let $Y \in \Yinput_{p,q,K}$,
  with~\mbox{$1 \leq p \leq 2$} and \mbox{$p < q \leq 2p$}.
  From Remark~\ref{rem:mk-choice} we know how to choose $m$ and $k$
  within the first stage of $A_{k,p,m}^{k',s,\eta}$,
  see Algorithm~\ref{alg:A_kpmksh}. %, which gives us $\widehat{R}_{k,m}$.
  Now, with Proposition~\ref{prop:M_km-prob}
  and the considerations~\eqref{eq:m'(tau)}--\eqref{eq:>tau},
  we can choose $s,k'$ and $\eta$ depending on
  $\eps>0$, uncertainty $0<\delta_2<\delta$ and $p,q$ as follows:
  \begin{itemize}
    \item Fix some $0 < \alpha_2 < 1/2$.
    \item Choose an odd number
      \mbox{$k' \geq 2 \log(2\delta_2)^{-1}
      /\log(4\alpha_2 (1-\alpha_2))^{-1}$},
      in order to guarantee the conditional confidence level.
      For~\mbox{$\alpha_2 = 1/4$} and \mbox{$\delta_2 = \delta/2$},
      this 
      simplifies to
      \mbox{$k' \geq 2 \log \delta^{-1} / \log \frac{4}{3}$}.
    \item Choose $m'$
    according to \eqref{eq:m'(tau)} and \eqref{eq:>tau}
     with $\alpha_2$ instead of $\alpha$.
      For~\mbox{$\alpha_2 = 1/4$} and \mbox{$\gamma = 1/2$},
      it is sufficient to choose 
      \begin{align*}
        s &:= \fracts{1}{p} \max\left\{1 + \fracts{1}{q-1}, 2\right\}\\
        \eta &:=\begin{cases}
                  16^{1/(q-1)} \, K^{1 + 1/(q-1)} \, \eps^{-(1 + 1/(q-1))}
                  \quad&\text{for $q \leq 2$,} \\
                  \displaystyle
                  16 \, K^{q(2-p)/(q-p)} \, \eps^{-2}
                  \quad&\text{for $2 \leq q$.}
                \end{cases}
      \end{align*}
  \end{itemize}
\end{rem}

This gives algorithms $A_{k,p,m}^{k',s,\eta}$ 
for approximating \checked{the mean} $\expect Y$ %for
\checked{of random variables} $Y \in \Yinput_{p,q,K}$
%for
\checked{from cones with} very specific parameter ranges,
namely \mbox{$1 \leq p \leq 2$} and \mbox{$p < q \leq 2p$}.
These methods are also suitable for general cones~$\Yinput_{p,q,K}$
since by Corollary~\ref{cor:pq-cone_subset}
they are contained in specific cones,
in particular $\Yinput_{p,q,K} \subset \Yinput_{1,\wt{q},K'}$
with~\mbox{$\wt{q}:= \min\{q,2\}$} and some~$K'$.
In other words, it is always feasible to rely on a first moment estimate.
The following theorem summarizes the solvability result:
\begin{thm} \label{thm:solvability}
  The problem of computing expected values~\eqref{eq:E(Y)}
  is solvable for all input classes~$\Yinput_{p,q,K}$,
  \mbox{$1 \leq p < q \leq \infty$}, see~\eqref{eq:cone}.
  In detail, for any~\mbox{$\eps, \delta > 0$},
  the algorithm~$A_{k,1,m}^{k',s,\eta}$ (see Algorithm~\ref{alg:A_kpmksh}),
  with odd natural numbers
  $k,k' \geq 2 \frac{\log \delta^{-1}}{\log(4/3)}$,
  and
  \begin{itemize}
    \item for $1 \leq p < q \leq 2$ with
%     if $1 \leq p < q \leq 2$, put
      \begin{align*}
        m &:= \lceil 3 \cdot 48^{1/(q-1)} \, K^{p q/(q-p)} \rceil \,,\\
        s &:= 1 + \fracts{1}{q-1}\,,\\
        \eta &:= 16^{1/(q-1)} \, K^{p q/(q-p)} \, \eps^{-(1+1/(q-1))} \,;
%\quad\text{and}
        \end{align*}
    \item for $1 \leq p < q$ and $2 \leq q$, with
     %     if $1 \leq p < q$ and $2 \leq q$, put
      \begin{align*}
        m &:= \lceil 144 \, K^{p q/(q-p)} \rceil \,,\\
        s &:= 2 \,,\\
        \eta &:= 16 \, K^{p q/(q-p)} \, \eps^{-2} \,,
      \end{align*}
  \end{itemize}
  is $(\eps,\delta)$-approximating on $\Yinput_{p,q,K}$.
\end{thm}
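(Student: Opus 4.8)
The plan is to assemble the statement from the ingredients already prepared in the preceding subsections; no new estimate is needed, only a synthesis together with a careful bookkeeping of constants. Throughout I fix $\eps,\delta>0$, split $\delta=\delta_1+\delta_2$ with $\delta_1=\delta_2=\delta/2$, and take $\alpha_1=\alpha_2=1/4$ and $\gamma=1/2$ in the notation of Remarks~\ref{rem:mk-choice} and~\ref{rem:eta-choice}.

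First I would reduce the general cone to a first-moment cone. Since $A_{k,1,m}^{k',s,\eta}$ estimates the first central moment, Corollary~\ref{cor:pq-cone_subset} applies: writing $\wt{q}:=\min\{q,2\}$, one has $\Yinput_{p,q,K}\subset\Yinput_{1,\wt{q},\wt{K}}$ with $\wt{K}=K^{p(q-1)/(q-p)}$ if $q\le 2$ (so $\wt{q}=q$), and $\Yinput_{p,q,K}\subset\Yinput_{1,2,\wt{K}}$ with $\wt{K}=K^{pq/(2(q-p))}$ if $q\ge 2$ (so $\wt{q}=2$); the cases $q=\infty$ are covered by the limiting values noted in that corollary. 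Hence it suffices to establish the $(\eps,\delta)$-guarantee for inputs $Y\in\Yinput_{1,\wt{q},\wt{K}}$, and for such a cone the parameters $p=1$, $q=\wt{q}$, constant $\wt{K}$ fall exactly under the cases treated in Remarks~\ref{rem:mk-choice} and~\ref{rem:eta-choice} (the case ``$p=1<q\le 2$'', including its boundary $q=2$). Below, $\rho:=\|Y-\expect Y\|_1$ and $\tau:=\|Y-\expect Y\|_{\wt{q}}$, and the cone condition gives $\tau\le\wt{K}\rho$.

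Next I would run the two stages. For the first stage, Proposition~\ref{prop:|R_km-rho|<}(a) together with the prescription of Remark~\ref{rem:mk-choice} applied with cone constant $\wt{K}$ shows that, for odd $k\ge 2\log\delta^{-1}/\log(4/3)$ and $m\ge 3\cdot 48^{1/(\wt{q}-1)}\wt{K}^{\,1+1/(\wt{q}-1)}$, one has $\P\{\widehat{R}_{k,m}\ge(1-\gamma)\rho\}\ge 1-\delta_1$. Substituting the two expressions for $\wt{K}$ and simplifying the exponent, the requirement on $m$ becomes $m\ge 3\cdot 48^{1/(q-1)}K^{pq/(q-p)}$ for $q\le 2$ and $m\ge 144\,K^{pq/(q-p)}$ for $q\ge 2$, precisely the values in the theorem (taking ceilings only helps). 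For the second stage, conditionally on the outcome $\widehat{R}_{k,m}$ the sample size $m'=\max\{\lceil\eta(\widehat{R}_{k,m})^s\rceil,1\}$ is a fixed number, and the conditional distribution of $\wt{M}_{k',m'}$ coincides with that of the fixed-sample-size estimator $\widehat{M}_{k',m'}$, so Proposition~\ref{prop:M_km-prob} applies. Choosing $k',s,\eta$ as in Remark~\ref{rem:eta-choice} with constant $\wt{K}$ and using $\tau\le\wt{K}\rho\le\wt{K}\,\widehat{R}_{k,m}/(1-\gamma)$ on the event $\{\rho\le\widehat{R}_{k,m}/(1-\gamma)\}$, see \eqref{eq:m'(tau)}--\eqref{eq:>tau}, yields $\P(|\wt{M}_{k',m'}-\expect Y|\le\eps\mid\rho\le\widehat{R}_{k,m}/(1-\gamma))\ge 1-\delta_2$; the same $\wt{K}$-substitution turns the prescription into the stated $s=1+\tfrac1{q-1}$, $\eta=16^{1/(q-1)}K^{pq/(q-p)}\eps^{-(1+1/(q-1))}$ for $q\le 2$, and $s=2$, $\eta=16\,K^{pq/(q-p)}\eps^{-2}$ for $q\ge 2$.

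Finally, combining the stages exactly as in the computation around \eqref{eq:mean<wish} gives $\P\{|A_{k,1,m}^{k',s,\eta}(Y)-\expect Y|\le\eps\}\ge(1-\delta_1)(1-\delta_2)\ge 1-\delta$ for every $Y\in\Yinput_{p,q,K}$, which is the asserted solvability. I expect the only genuinely delicate part to be the constant tracking in the two previous paragraphs: one must verify that the $K$-exponents coming from the (possibly nested) cone inclusions collapse to the single clean exponent $pq/(q-p)$ occurring in $m$ and $\eta$, that the rounding operations $\lceil\cdot\rceil$ never hurt, and that the boundary and limiting cases $q=2$ and $q=\infty$ are handled consistently. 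Everything else is a direct citation of Propositions~\ref{prop:|R_km-rho|<} and~\ref{prop:M_km-prob} and of the median trick, Proposition~\ref{prop:med}.
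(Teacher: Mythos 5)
Your proposal is correct and follows essentially the same route as the paper: the paper presents Theorem~\ref{thm:solvability} as a summary of the reduction $\Yinput_{p,q,K}\subset\Yinput_{1,\wt{q},\wt{K}}$ via Corollary~\ref{cor:pq-cone_subset}, the parameter prescriptions of Remarks~\ref{rem:mk-choice} and~\ref{rem:eta-choice}, and the conditional combination of the two stages around~\eqref{eq:mean<wish}. Your constant tracking (in particular $\wt{K}^{\,q/(q-1)}=K^{pq/(q-p)}$ for $q\le 2$ and $\wt{K}^{2}=K^{pq/(q-p)}$ for $q\ge 2$) reproduces exactly the stated values of $m$, $s$, and~$\eta$.
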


\subsection{Estimates on the cost}

% Obviously, the cost~$n(\omega,Y)$ of the considered algorithms is random.
Obviously, the cost $n(\checked{\omega},Y)$ of the considered algorithms is
a random variable. % direktere Formulierung.
The quantity of interest is the expected cost
\begin{equation*}
 \bar{n}(A_{k,p,m}^{k',s,\eta},Y)
   := \expect n(\,\cdot\,,Y)
   \leq km+k' (1+\eta\, \expect  \widehat{R}_{k,m}^s).
\end{equation*}
The following lemma is essential,
since it provides an upper bound on $\expect \widehat{R}_{k,m}^s$.

\begin{lem} \label{lem:E(R^s)}
  Let $Y \in \Yinput_{p,q,K}$ with $1 \leq p \leq 2$ and $p < q \leq 2p$.
  Let $\rho = \|Y - \expect Y\|_p$
  and $s = \frac{1}{p} \max\{1 + 1/(q-1), 2\}$.
  For $0<\gamma<1$ and $\alpha_1 = 1/4$
  choose~$m$ such that (via Lemma~\ref{lem:|R_m-rho|<})
  for the $p$-moment estimators~$\widehat{R}_m^{(\ell)}$
  from Algorithm~\ref{alg:A_kpmksh} we have
  \begin{equation} \label{eq:|R_m-rho|<gam rho}
   \P\left\{ | \widehat{R}_m^{(\ell)}-\rho^p | \leq \gamma \rho^p \right\}
     \geq 1 - \alpha_1 \,.
  \end{equation}
  Assume that \mbox{$k \geq 4ps/q$}, with $k$ being odd.
  Then the median of $p$-moment estimators, $\widehat{R}_{k,m}$, satisfies
  \begin{equation*}
    \expect \widehat{R}_{k,m}^s
      \,\leq\, (1 + 3 \,\gamma)^s \, \rho^{ps} \,.
  \end{equation*}
\end{lem}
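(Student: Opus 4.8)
\emph{Approach.} The plan is to establish a sharp right-tail bound for the median estimator~$\widehat{R}_{k,m}$ and then integrate it against $s\,t^{s-1}\,\diff t$. A single estimator~$\widehat{R}_m^{(\ell)}$ has only $r := q/p$ finite moments (quantified by Lemma~\ref{lem:R_m-error}(b)), so a direct moment bound on~$\widehat{R}_{k,m}^s$ is hopeless; the crucial gain is that the median of~$k$ independent copies behaves as if it had an effective tail exponent~$rk/2$, and the hypothesis $k \geq 4ps/q$ is exactly what makes $rk/2 \geq 2s$.

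\emph{The tail bound.} First I would unwind the choice of~$m$: ensuring \eqref{eq:|R_m-rho|<gam rho} through Lemma~\ref{lem:|R_m-rho|<}(b) amounts to $26\,K^p \, m^{-(1-1/r)} \leq \gamma\,\alpha_1^{1/r}$, and hence, by Lemma~\ref{lem:R_m-error}(b), $\|\widehat{R}_m^{(\ell)} - \rho^p\|_r \leq \gamma\,\alpha_1^{1/r}\,\rho^p$. Markov's inequality in~$L_r$ then gives $\P\{|\widehat{R}_m^{(\ell)} - \rho^p| > u\} \leq \big(\gamma\,\alpha_1^{1/r}\,\rho^p/u\big)^r$ for all $u>0$. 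Since the moment estimator is nonnegative, for any $t > (1+\gamma)\,\rho^p$ the event $\{\widehat{R}_m^{(\ell)} > t\}$ is contained in $\{|\widehat{R}_m^{(\ell)} - \rho^p| > t - \rho^p\}$, and $t - \rho^p > \gamma\,\rho^p$, so $\alpha(t) := \P\{\widehat{R}_m^{(\ell)} > t\} \leq \big(\gamma\,\alpha_1^{1/r}\,\rho^p/(t-\rho^p)\big)^r < \alpha_1 = \tfrac14$. Applying Proposition~\ref{prop:med} with the half-line interval $I = (-\infty,t]$ (the median of a majority of points lying in an interval lies in that interval), bounding $(4\alpha(t)(1-\alpha(t)))^{k/2} \leq 2^k\,\alpha(t)^{k/2}$, and using the identity $2^k\,\alpha_1^{k/2} = 1$ valid at $\alpha_1 = \tfrac14$, I obtain the clean tail bound $\P\{\widehat{R}_{k,m} > t\} \leq \tfrac12\,\big(\gamma\,\rho^p/(t-\rho^p)\big)^{rk/2}$ for every $t > (1+\gamma)\,\rho^p$.

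\emph{Integration.} Then I would write $\expect \widehat{R}_{k,m}^s = \int_0^\infty s\,t^{s-1}\,\P\{\widehat{R}_{k,m} > t\}\,\diff t$ and split at $t_0 := (1+\gamma)\,\rho^p$. The part over $[0,t_0]$ is at most $\int_0^{t_0} s\,t^{s-1}\,\diff t = (1+\gamma)^s\,\rho^{ps}$. For the part over $[t_0,\infty)$ I substitute $t = \rho^p(1+u)$ and set $L := rk/2 \geq 2s$; using $(1+u)^{s-1} \leq (1+1/\gamma)^{s-1}\,u^{s-1}$ for $u \geq \gamma$ — valid because $s = \tfrac1p\max\{1+\tfrac1{q-1},2\} \geq 2/p \geq 1$ — this integral reduces to $\tfrac{s}{2}\,\rho^{ps}\,\gamma^L\,(1+1/\gamma)^{s-1}\int_\gamma^\infty u^{s-1-L}\,\diff u$. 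Since $L \geq 2s$ the exponent $s-1-L$ is $<-1$, so the integral equals $\gamma^{s-L}/(L-s)$; collecting the powers of~$\gamma$ (note $\gamma^L(1+1/\gamma)^{s-1}\gamma^{s-L} = \gamma\,(1+\gamma)^{s-1}$) and using $s/(L-s) \leq 1$ bounds the tail part by $\tfrac12\,\gamma\,(1+\gamma)^{s-1}\,\rho^{ps}$. Summing the two parts, $\expect\widehat{R}_{k,m}^s \leq (1+\gamma)^{s-1}\big(1+\tfrac32\gamma\big)\,\rho^{ps} \leq (1+3\gamma)^s\,\rho^{ps}$, the last step because $(1+3\gamma)^{s-1} \geq (1+\gamma)^{s-1}$ and $1+3\gamma \geq 1+\tfrac32\gamma$.

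\emph{Main obstacle.} The delicate point is the tail bound: one needs a bound on the median that decays strictly faster than~$t^{-2s}$ (so that, weighted by~$t^{s-1}$, it is integrable with room to spare) and at the same time has a multiplicative constant small enough that the leading layer~$(1+\gamma)^s$ is only perturbed by an $O(\gamma)$ term. This is what forces the particular choices above — taking $I=(-\infty,t]$ in the median trick and exploiting the exact cancellation $2^k\,\alpha_1^{k/2}=1$ at $\alpha_1=\tfrac14$ — after which the substitution and the elementary inequalities for powers of $(1+c\gamma)$ are routine.
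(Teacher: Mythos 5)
Your proof is correct and follows essentially the same route as the paper: both unwind the choice of $m$ into the moment bound $\|\widehat{R}_m^{(\ell)}-\rho^p\|_{q/p}\leq\gamma\,\alpha_1^{p/q}\rho^p$, convert it via Markov and the median trick (with the cancellation $2^k\alpha_1^{k/2}=1$ at $\alpha_1=1/4$) into a tail bound decaying like $t^{-qk/(2p)}$, and integrate, with $k\geq 4ps/q$ ensuring integrability with enough room to keep the perturbation of order $\gamma$. The only cosmetic difference is that you apply the layer-cake formula directly to $\widehat{R}_{k,m}^s$ with a split at $(1+\gamma)\rho^p$, whereas the paper first uses the triangle inequality in $L_s$ and tail-integrates $|\widehat{R}_{k,m}-\rho^p|^s$; both land within the stated constant $(1+3\gamma)^s$.
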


\begin{proof}
  Lemma~\ref{lem:|R_m-rho|<} and Proposition~\ref{prop:|R_km-rho|<}
  are stated for $0<\alpha<1/2$,
  so our assumption~\eqref{eq:|R_m-rho|<gam rho} implies
 \begin{equation*}
  \P\{|\widehat{R}_{k,m} - \rho^p| \leq c \, \alpha^{-p/q} \, \rho^p\}
  \geq 1 - \fracts{1}{2} \, (4 \alpha (1-\alpha))^{k/2} \,,
  \end{equation*}
  where \mbox{$c = \alpha_1^{p/q} \, \gamma = 4^{-p/q} \, \gamma$}.
  We use the simplified estimate
  \begin{equation*}
    \P\left\{|\widehat{R}_{k,m} - \rho^p|^s
                > c^s \, \alpha^{- ps/q} \, \rho^{ps}
      \right\}
        \leq 2^{k-1} \, \alpha^{k/2} \,.
  \end{equation*}
  Suppose $\expect \widehat{R}_{k,m}^s$ is finite,
  then by the triangle inequality applied  to the $\|\cdot\|_s$-norm,
  we obtain
  \begin{equation*}
    \expect \widehat{R}_{k,m}^s
      \leq \left(\rho^{p} + \| \widehat{R}_{k,m} - \rho^p\|_s\right)^s \,.
  \end{equation*}
  The norm therein can be estimated by
  \begin{equation*}
    \expect|\widehat{R}_{k,m} - \rho^p|^s
      \leq t_0 + \int_{t_0}^{\infty}
    \P\{|\widehat{R}_{k,m} - \rho^p|^s > t\}
                  \dint t \,.
  \end{equation*}
  This leads to
   $\alpha
      = c^{q/p} \, \rho^q \, t^{-q/(ps)}$\,.
  We choose
    $t_0 = \alpha_1^{-ps/q} \, c^s \, \rho^{ps}
         = 4^{ps/q} \, c^s \, \rho^{ps}$,
  and we use
  \begin{equation*}
    \int_{t_0}^{\infty} t^{-kq/(2ps)} \dint t
      = \left(\frac{kq}{2ps} - 1\right)^{-1} \, t_0^{-kq/(2ps) + 1} \,,
  \end{equation*}
  which holds for~$kq/(2ps) > 1$, that is~$k > 2ps / q$, only.
  Then
  \begin{align*}
    \int_{t_0}^{\infty}
          \P\{|\widehat{R}_{k,m} - &\rho^p|^s > t\}
    \dint t\\
      &\,\leq\, 2^{k-1} \, c^{kq/(2p)} \, \rho^{kq/2}
          \, \left(\frac{kq}{2ps} - 1\right)^{-1}
          \, (4^{ps/q} \, c^s \, \rho^{ps})^{-kq/(2ps) + 1} \\
      &\,=\, \left(\frac{kq}{2ps} - 1\right)^{-1}
                  \, 2^{2ps/q - 1} \, c^s \, \rho^{ps}\,.
  \end{align*}
  Altogether we obtain
  \begin{align*}
    \expect \widehat{R}_{k,m}^s
      &\,\leq\, \left(1 + \left(1
      + \left(\frac{kq}{2ps} - 1 \right)^{-1} \, \frac{1}{2}
      \right)^{1/s} \, 2^{2p/q} \, c
      \right)^s
                \, \rho^{ps} \, . 
%E                              ^ 
        \end{align*}                  
  Taking~$k \geq 4ps/q$, and with~\mbox{$c \leq 2^{-p/q}\,\gamma$} as well as $p/q<1$,
  this further simplifies to
%   \begin{align*}
$
  \expect \widehat{R}_{k,m}^s
      \,\leq\, \left(1 + 2 \left(3/2\right)^{1/s} \, \gamma
              \right)^s
        \, \rho^{ps} \,.
$
        %   \end{align*}
  We have $s\geq 1$ such that~\mbox{$1/s \leq 1$}, 
  which leads to the assertion.
\end{proof}

With the additional assumption on the median trick that $k\geq 4ps/q$,
we can estimate the expected cost for fixed input.
For the particular setting from Theorem~\ref{thm:solvability}, where we use
$\Yinput_{p,q,K}\subset \Yinput_{1,\wt{q},K'}$
with~$\wt{q}:=\min\{q,2\}$,
we get the following upper bound on the cost.

\begin{thm} \label{thm:costbound}
  For computing %the computation of
  $\expect Y$ on $\Yinput_{p,q,K}$ with
  \mbox{$1 \leq p < q \leq \infty$}, see~\eqref{eq:cone}, consider 
  $A_{k,1,m}^{k',s,\eta}$, see Algorithm~\ref{alg:A_kpmksh}.
        For any \mbox{$\eps > 0$} and \mbox{$0 < \delta < 1/2$} 
  choose $m$, $s$, and $\eta$ as in Theorem~\ref{thm:solvability}, 
  where~$k$ and $k'$ are the least odd natural numbers
  such that \mbox{$k,k' \geq 2 \log \delta^{-1} / \log \fracts{4}{3}$},
  and in addition $k \geq \max\{4/(q-1),4\}$.
  Then, $A_{p,q,K}^{\eps,\delta} := A_{k,1,m}^{k',s,\eta}$
        is $(\eps,\delta)$-approximating on~$\Yinput_{p,q,K}$
  and
  \begin{equation*}
    \bar{n}(A_{p,q,K}^{\eps,\delta},Y)
      \leq C_q \, K^{pq/(q-p)}
            \, \left(1 + \left(\frac{\|Y - \expect Y\|_1}{\eps}
  \right)^{\max\{1+1/(q-1),2\}}
  \right)
  \, \log \delta^{-1}
  \end{equation*}
  where $Y \in \Yinput_{p,q,K}$,
  %with first moment~\mbox{$\rho := \|Y - \expect Y\|_1$},
  and $C_q > 0$ is a $q$-dependent constant.
\end{thm}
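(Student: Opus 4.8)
The plan is to reduce the claim to the inequality
\[
  \bar n(A_{k,1,m}^{k',s,\eta},Y) \;\leq\; k\,m + k'\bigl(1 + \eta\,\expect\widehat R_{k,m}^s\bigr)
\]
recorded just before Lemma~\ref{lem:E(R^s)}, and then to bound $k$, $k'$, $m$, $\eta$ and the moment $\expect\widehat R_{k,m}^s$ one at a time. First one observes that the chosen parameters are admissible: $m$, $s$, $\eta$ are precisely those of Theorem~\ref{thm:solvability}, while $k$ and $k'$ are only enlarged (the extra requirement being $k\ge\max\{4/(q-1),4\}$), so by the median trick, Proposition~\ref{prop:med}, the confidence can only improve and $A_{p,q,K}^{\eps,\delta}$ stays $(\eps,\delta)$-approximating on $\Yinput_{p,q,K}$ by Theorem~\ref{thm:solvability}. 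Abbreviate $\rho_1 := \|Y - \expect Y\|_1$.

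The heart of the proof is the bound on $\expect\widehat R_{k,m}^s$. For this one uses the embedding $\Yinput_{p,q,K}\subset\Yinput_{1,\wt q,K'}$ from Corollary~\ref{cor:pq-cone_subset}, with $\wt q:=\min\{q,2\}$ and $K' = K^{p(q-1)/(q-p)}$ if $q\le 2$, respectively $K' = K^{pq/(2(q-p))}$ if $q\ge 2$; then $Y$ lies in a cone of the type required by Lemma~\ref{lem:E(R^s)}, read with its ``$p$'' equal to $1$ and its ``$q$'' equal to $\wt q$. With $\gamma=\tfrac12$ and $\alpha_1=\tfrac14$, the value $s$ from Theorem~\ref{thm:solvability} agrees with $\max\{1+1/(\wt q-1),2\}=\max\{1+1/(q-1),2\}$; the prescribed $m$ is exactly what Remark~\ref{rem:mk-choice} needs so that \eqref{eq:|R_m-rho|<gam rho} holds for the cone $\Yinput_{1,\wt q,K'}$; and the hypothesis $k\ge 4ps/q$ of Lemma~\ref{lem:E(R^s)} here reads $k\ge 4s/\wt q$, which equals $4/(q-1)$ for $q\le 2$ and $4$ for $q\ge 2$, i.e.\ it is exactly $k\ge\max\{4/(q-1),4\}$. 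Hence Lemma~\ref{lem:E(R^s)} yields
\[
  \expect\widehat R_{k,m}^s \;\leq\; (1+3\gamma)^s\,\rho_1^s \;=\; \bigl(\tfrac52\bigr)^{s}\rho_1^s.
\]

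It then remains to collect the magnitudes. Since $\delta<\tfrac12$ we have $\log\delta^{-1}\ge\log 2>0$, so the least odd integers $k,k'$ above $\max\{2\log\delta^{-1}/\log\tfrac43,\,4/(q-1),\,4\}$ satisfy $k,k'\le c_q\log\delta^{-1}$ for some $q$-dependent $c_q$; the prescribed $m$ obeys $m\le C_q'\,K^{pq/(q-p)}$; and in both regimes the prescribed $\eta$ has the shape $\eta = C_q''\,K^{pq/(q-p)}\,\eps^{-s}$ with $s=\max\{1+1/(q-1),2\}$ --- this is the one place where the embedding constant $K'$ must be tracked with care so that the exponent of $K$ comes out as $pq/(q-p)$ in both $\eta$ and $m$. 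Substituting everything into the cost inequality and using $K^{pq/(q-p)}\ge 1$ to absorb the additive term from $k'(1+\cdots)$, one obtains
\[
  \bar n(A_{p,q,K}^{\eps,\delta},Y) \;\leq\; K^{pq/(q-p)}\,\log\delta^{-1}\,\bigl(a_q + b_q\,(\rho_1/\eps)^{s}\bigr) \;\leq\; C_q\,K^{pq/(q-p)}\bigl(1+(\rho_1/\eps)^{s}\bigr)\log\delta^{-1}
\]
for suitable $q$-dependent $a_q,b_q$ and $C_q:=\max\{a_q,b_q\}$, which is the assertion. The argument is in essence bookkeeping; the only delicate points are the consistent tracking of $K'$ just described and checking that the lower bound on $k$ demanded by Lemma~\ref{lem:E(R^s)} --- which is also what makes $\expect\widehat R_{k,m}^s$ finite --- is indeed met by the stated parameters, and beyond this there is no genuine analytic obstacle.
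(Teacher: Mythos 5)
Your proposal is correct and follows essentially the same route the paper intends: the paper gives no separate proof of Theorem~\ref{thm:costbound} but derives it, exactly as you do, from the cost inequality $\bar n \le km + k'(1+\eta\,\expect\widehat R_{k,m}^s)$ together with Lemma~\ref{lem:E(R^s)} applied through the embedding $\Yinput_{p,q,K}\subset\Yinput_{1,\wt q,K'}$ and the parameter choices of Theorem~\ref{thm:solvability} and Remarks~\ref{rem:mk-choice}--\ref{rem:eta-choice}. Your tracking of $K'$ so that both $m$ and $\eta$ carry the factor $K^{pq/(q-p)}$, and your check that $k\ge\max\{4/(q-1),4\}$ is precisely the hypothesis $k\ge 4ps/q$ of the lemma in the case $p=1$, $q=\wt q$, are the right verifications.
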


Note that in the above result,
$K > 1$ and $\delta < 1/2$ ensure that
additive constants can be absorbed by~$C_q$ and
are not lost by writing $K^{pq/(q-p)}$ and $\log \delta^{-1}$ as factors.

\begin{rem}[Balancing the cost between the two stages] \label{rem:balance}
  There is some arbitrariness about
  how much effort we put on each of the two stages
  for $Y\in\Yinput_{p,q,K}$.
  The more accurately we estimate $\|Y - \expect Y\|_1$,
  that is with relative accuracy $\gamma$ tending to $0$,
  the more accurately we can choose the sample size~$n_2$ for the second stage.
  In return, being parsimonious in the first stage,
  which means $\gamma \to 1$,
  we likely overshoot the sample size for the second stage.
  We thus suggest the following compromise:
  Choose~$\gamma \in (0,1)$ such that the cost of each of the two stages
  exceeds the cost of the respective limiting cases
  by roughly the same factor.
  
  The algorithm for which we give cost bounds in Theorem~\ref{thm:costbound}
  uses an estimate for the first moment.
  With this strategy, for any input set~$\Yinput_{p,q,K}$,
  the cost for the first stage possesses the optimal dependence on~$K$,
  compare the lower bounds in Theorem~\ref{thm:fixedcost}.
  But since actually a higher moment
  is of interest for the sample size selection in the second stage,
  namely $\expect |Y-\expect Y|^{\wt{q}}$,
  with $\wt{q} := \min\{q,2\}$,
  compare Proposition~\ref{prop:M_km-prob},
  the inflation parameter~$\eta$ has to be proportional to~$K^{pq/(q-p)}$.
  For some instances \mbox{$Y \in \Yinput_{p,q,K}$}
  with a far smaller ratio between the different moments
  we thus actually overshoot the sample size by this factor.
  It might be hence beneficial for the adaption
  to estimate higher moments within the first stage:
  \begin{itemize}
    \item For %input sets %% Robert: Aus Platzgr\"unden weggelassen
      $\Yinput_{p,q,K}$ with $1 \leq p < q \leq 2$,
      estimate the $p$-moment \mbox{$\expect|Y-\expect Y|^p$}.
      The cost for the first stage has still the optimal dependence on~$K$,
      the inflation factor~$\eta$ for the second stage
      is only proportional to~$K^{q/(q-1)}$.
    \item For input sets $\Yinput_{p,q,K}$ with $2 < q$, estimate the variance.
      That way we directly estimate the moment that is decisive
      for the sample size in stage two.
      The cost for stage one, though, will have a suboptimal dependence on~$K$
      if~$p < 2$.
  \end{itemize}
\end{rem}

\begin{rem} \label{rem:VarEst}
  Hickernell et al.\ consider in \cite{HJLO13}
  the expectation problem on $\Yinput_{2,4,K}$.
  They provide an $(\eps,\delta)$-approximating
  two-stage algorithm on this special class of inputs
  and that way prove solvability.
  The main difference to our Algorithm~\ref{alg:A_kpmksh}
  is that they do not use the median trick,
  which is tantamount to the parameter choice~$k=k'=1$.
  The cost of their algorithm then depends polynomially on
  the uncertainty~$\delta$
  instead of the optimal logarithmic dependence~\mbox{$\log \delta^{-1}$}
  which we have.
  
  For the special setting~$\Yinput_{2,4,K}$,
  the constants of our complexity bounds
  can be slightly improved.
  In detail, there is an unbiased empirical variance estimator
  \begin{equation*}
    \widehat{V}_m := \frac{1}{m-1} \sum_{i=1}^m (Y_i - \widehat{M}_m)^2,\qquad m\geq 2,
  \end{equation*}
  which shall replace~$\widehat{R}_m$, see~\eqref{eq:R_m}, % with~$p=2$,
  and its variance \mbox{$\Var(\widehat{V}_m)$}
  can be precisely estimated since~$Y$ possesses a fourth moment.
  Then, Lemma~1 in Hickernell et al.~\cite{HJLO13}
  can be used instead of Lemma~\ref{lem:|R_m-rho|<} in our paper
  for the confidence level~$\alpha_1$ before probability amplification.
  In combination with the median trick this enables us
% Daniel: Nur bessere Konstanten?
% Robert: Dass die Konstanten besser werden,
%         steht bereits am Anfang des Absatzes.
%         Der Absatz davor hat schon erklärt
%         dass der Median-Trick eine bessere $\delta$-Abhängigkeit ermöglicht.
%         Ich sehe also den Satz als Zusammenfassung des bereits gesagten.
  to construct algorithms with more favorable cost bounds.
% \dr{in the constants}. %Robert: Braucht es meiner Ansicht nach nicht.
\end{rem}

\section{Lower bounds}   \label{sec:LBs}

\subsection{Algorithms, solvability, and complexity}

We start with a formal definition of algorithms
that are admissible for 
the computation of $\expect Y$
(or other quantities associated to a random variable~$Y$,
e.g.\ different moments),
as well as the larger class of algorithms
admissible for numerical integration %~\eqref{eq:INT}
of a function~$f$.
%%Robert: Bei $\expect Y$ hatten wir auf nicht auf \eqref{eq:E(Y)} verwiesen.
Recall that $(\Omega,\Sigma,\P)$ is a probability space.
In the context of algorithms that take random variables as input,
let $\Yinput$ be a subset of all real-valued random variables
on $(\Omega,\Sigma,\P)$.
For any input~$Y \in \Yinput$
we assume that we have a sequence of i.i.d.\ copies $Y_1,Y_2,\ldots \sim Y$
defined on $(\Omega,\Sigma,\P)$.
The algorithms may depend on an additional random variable~$U$
with values in a measurable space $(\mathbb{U},\Sigma_U)$
which influences how the i.i.d.\ sample is used.
This random variable is assumed to be independent
of the i.i.d.\ sample $(Y_i)_{i\in\N}$.
In contrast, for algorithms that take functions as input,
the randomness is purely created by the algorithm design,
whereas the input class~$\Finput = \Finput(\pi)$
is a subset of all measurable functions~$f\colon G\to \R$
on the probability space $(G,\Sigma_G,\pi)$
and algorithms may interrogate %use
function values~$f(\vecx)$ at any point~$\vecx \in G$.

\begin{defi}[Abstract algorithms] \label{def:alg}
  Assume that we have
  \begin{itemize}
    \item an auxiliary random variable $U$ with values in a measurable space
      $(\mathbb{U},\Sigma_U)$;
    \item a measurable output function
      $\phi\colon \mathbb{U} \times \R^{\N} \to \R$; and
    \item a measurable function
      $n\colon \mathbb{U} \times \R^{\N} \to \N_0 \cup \{\infty \}$
      (called \emph{stopping rule})
      for which the value of \mbox{$\ind[n(u,(y_j)_{j \in \N}) \geq i]$}
      only depends on~$(u,y_1,\ldots,y_{i-1})$.
      % Robert: Ich hab mir mal erlaubt, das anzupassen,
      %         damit es besser zu unten passt, vorher hieß das Ereignis "n > i".
  \end{itemize}
  Based on this we define two types of algorithms:
  \begin{enumerate}
    \item An \emph{i.i.d.-based algorithm}~$A$ on~$\Yinput$
      is a mapping \mbox{$A: \Omega \times \Yinput \to \R$}
      given by
      \begin{equation*}
        A(\omega,Y)
          := \phi(U(\omega),
                  Y_1(\omega),\ldots,Y_{n(\omega,Y)}(\omega),0,0,\ldots) \,,
      \end{equation*}
      where $(Y_i)_{i \in \N}$ is a sequence
      of i.i.d.\ copies of~$Y \in \Yinput$
      and independent of~$U$;
      further, the \emph{stopping time}
      \begin{equation*}
        n(\omega,Y) := n(U(\omega),(Y_i(\omega))_{i \in \N})
      \end{equation*}
      is assumed to be $\P$-almost surely finite for all $Y \in \Yinput$.
    \item A \emph{general randomized algorithm}~$Q$ on~$\Finput$
      is a mapping \mbox{$Q: \Omega \times \Finput \to \R$}
      given by
      \begin{equation*}
        Q(\omega,f) := \phi(U(\omega),N(\omega,f))
      \end{equation*}
      with a measurable information map $N: \Omega \times \Finput \to \R^{\N}$,
      \mbox{$N(\omega,f) = (y_i)_{i \in \N}$}, satisfying
      \begin{equation*}
        y_i = \begin{cases}
                f(\vecX_i(\omega,y_1,\ldots,y_{i-1}))
                  &\quad\text{for $n(U(\omega),(y_j)_{j \in \N}) \geq i$,}\\
                0 &\quad \text{for $n(U(\omega),(y_j)_{j \in \N}) < i$,}
              \end{cases}
      \end{equation*}
      for a sequence~$(\vecX_i)_{i\in \N}$ of measurable functions
      $\vecX_i: \Omega \times \R^{i-1} \to G$
      (called \emph{sampling nodes});\footnote{
        The $y_i$ are well defined since by the definition of 
        a stopping rule
        the outcome of the statement $n(U(\omega),(y_j)_{j \in \N}) \geq i$ depends only on $U(\omega)$
        and $y_1,\ldots,y_{i-1}$.
      }
      % Daniel: Ich finde das man bei den $y_i$s stolpern kann, da ja die Fallunterscheidung scheinbar von der ganzen Folge der $y_j$s abhaengt. Deshalb die Fussnote als Vorschlag.
% Robert: An sich gut. Mal abwarten, was das Journal dazu sagt. Erich meinte mal, die mögen solche Fußnoten nicht. Aber dadurch, dass das einen Einzelfall darstellt, könnte man das dann leicht wieder ändern, wenn es gewünscht wird.
      further, the \emph{stopping time}
      (also called \emph{cardinality function})
      \begin{equation*}
        n(\omega,f) := n(U(\omega),N(\omega,f))
      \end{equation*}
      is assumed to be $\P$-almost surely finite for all $f \in \Finput$.
  \end{enumerate}
  Thus, the application of an algorithm to an input,
  that is \mbox{$A(Y) := A(\,\cdot\,,Y)$} or \mbox{$Q(f) := Q(\,\cdot\,,f)$},
  respectively,
  is a random variable on $(\Omega,\Sigma,\P)$.
\end{defi}
Every i.i.d.-based algorithm provides a general randomized algorithm:
For the integration problem \eqref{eq:INT}
on a class~$\Finput = \Finput(\pi)$ of $\pi$-measurable
real-valued functions,
let $\vecX$ be a $\pi$-distributed random variable and define the input class
\mbox{$\Yinput := \{f(\vecX)\mid f\in \Finput\}$}.
For every $f\in\Finput$ and an 
i.i.d.\ sequence of $\pi$-distributed sampling nodes $(\vecX_i)_{i\in\N}$
\checked{independent of~$U$}, 
observe that $(f(\vecX_i))_{i\in\N}$ 
is an i.i.d.\ sequence of copies of \mbox{$f(\vecX) \in \Yinput$},
thus applicable within any i.i.d.-based algorithm~$A$.
That way, $A(\omega,f(\vecX))$ can be interpreted as a general randomized algorithm
$Q(\omega,f)$ with restricted type of information.
Lower bound proofs are less technical
  for the smaller class of i.i.d.-based algorithms,
  see Sections~\ref{sec:Y-LBs0} and \ref{sec:Y-worLBs},
  but can be extended to general integration methods,
  see Section~\ref{sec:INTLBs}.
\checked{Let us mention here that we deviate from the ``standard'' technique
% of Bakhvalov \cite{Bakh59} 
of proving lower bounds for general randomized algorithms in IBC
via an average case setting, 
see for example \cite[Lemma~4.37]{NW08}.
Such a direct approach for proving lower bounds for the integration
\checked{with ``small error at high confidence''
is developed in~\cite{KR18} with application to Sobolev classes.}}
%on Sobolev classes
%for ``small error with high probability'' is developed 
%in \cite{KR18}.}

Before proceeding with our results,
we formally introduce the concepts of solvability and complexity
for the expectation problem \eqref{eq:E(Y)}
on a class $\Yinput$ and,
analogously, for the integration problem \eqref{eq:INT}
on a class $\Finput$.
  
\begin{defi}[Solvability]
  The problem~\eqref{eq:E(Y)} of computing an expected value
  is \emph{solvable} on a class of random variables~$\Yinput$,
  iff for any $\eps > 0$, and $\delta \in (0,1)$,
  there is an i.i.d.-based algorithm~$A$ such that
  \begin{equation*}
    \P\{ | A(Y) - \expect Y | \leq \eps \} \geq 1-\delta,
  \end{equation*}
  for all $Y\in\Yinput$.
  In that case $A$ is called \emph{$(\eps,\delta)$-approximating
  \checked{(on $\Yinput$)}}.
  
  The integration problem \eqref{eq:INT}
  is \emph{solvable} on $\Finput$ 
  iff for any $\eps > 0$, and $\delta \in (0,1)$,
  there is a general randomized algorithm~$Q$ such that
  \begin{equation*}
    \P\{ | Q(f) - \Int(f) | \leq \eps \} \geq 1-\delta,
  \end{equation*}
  for all $f\in\Finput$.
  Here $Q$ is also called \emph{$(\eps,\delta)$-approximating
  \checked{(on $\Finput$)}}.
\end{defi}
\begin{rem}[Other notions of solvability]
  If a problem is even solvable in the sense
  that for any error threshold $\eps > 0$
  we can find an algorithm~$A$ that guarantees
  a bound for the \emph{expected error}
  \mbox{$\expect |A(Y) - \expect Y| \leq \eps$}
  (or the \emph{mean squared error}
  \mbox{$\expect |A(Y) - \expect Y|^2 \leq \eps^2$}),
  using Markov's inequality we can also guarantee solvability
  in the probabilistic sense of the definition above.
  The reverse statement is \emph{not} true.
  In fact, Theorem~\ref{thm:E(err)<eps--noway} shows
  that the notions differ for certain problems.
\end{rem}
Besides solvability of the problem,
for an $(\eps,\delta)$-approximating algorithm $A$ (resp.\ $Q$)
with $Y\in\Yinput$ (resp.\ $f\in\Finput$) 
we are interested in the cost,
given by the sample size $n(\checked{\omega},Y)$ (resp.\ $n(\checked{\omega},f)$).
Note that $n(\,\cdot\,,Y)$ is a random variable that depends on~$Y$, for example,
it might depend on the $p$-moment of~$Y$.
For the complexity analysis
we aim to control the \emph{expected cardinality},
\begin{equation} \label{eq:En}
  \bar{n}(A, Y)
    := \expect n(\,\cdot\,,Y)
    \qquad (\text{resp.} \; \bar{n}(Q, f)
    := \expect n(\,\cdot\,,f)) \,,
\end{equation}
so that running the algorithm multiple times
for several random variables of comparable difficulty
does not take an uncontrollably long computing time.

For classical input sets~$\Yinput$ (resp.\ $\Finput$) such as $L_q$-balls,
one usually considers the worst case cost
\begin{equation}
  \bar{n}^{\wor}(A, \Yinput) := \sup_{Y \in \Yinput} \bar{n}(A,Y) 
  \qquad (\text{resp.}\;
  \bar{n}^{\wor}(Q, \Finput) := \sup_{f \in \Finput} \bar{n}(Q,f) 
)
  \,.
\end{equation}
For $\eps,\delta>0$ the \emph{minimal worst case cost}
(or simply the \emph{worst case complexity} of the problem)
is given by
\begin{align*}
  \bar{n}^{\wor}(\eps,\delta,\Yinput)
    & := \inf \{n > 0 \mid \exists \;(\eps,\delta){\text{-approximating}\;} A\;
%       e_{\delta}^{\prob}(A,\Yinput) \leq \eps, \;
        \text{with}\;  \bar{n}^{\wor}(A,\Yinput) \leq n
            \},\\
  (\text{resp.}\;\bar{n}^{\wor}(\eps,\delta,\Finput)
  & := \inf \{n > 0 \mid \exists \;(\eps,\delta){\text{-approximating}\;} Q\;
%       e_{\delta}^{\prob}(A,\Yinput) \leq \eps, \;
        \text{with}\;  \bar{n}^{\wor}(Q,\Finput) \leq n
            \}).
\end{align*}
However,
for the cone-shaped input sets $\Yinput = \Yinput_{p,q,K}$ we are interested in,
this quantity is infinite which can be seen
via an appropriate lower bound proof,
compare Theorems~\ref{thm:LB-YcapB2} and~\ref{thm:LB-YcapBq}
and consider the limit of intersections of~$\Yinput_{p,q,K}$
with bigger and bigger $L_q$-balls.
Hence, we need an alternative concept of complexity.
We take two perspectives here.

First, we consider the \emph{fixed cost} of an i.i.d.-based algorithm~$A$ 
(resp.\ a general randomized algorithm $Q$), that is,
\begin{equation}
  \bar{n}^{\fix}(A, \Yinput) := \inf_{Y \in \Yinput} \bar{n}(A,Y), 
  \qquad (\text{resp.}\;    \bar{n}^{\fix}(Q, \Finput) := \inf_{f \in \Finput} \bar{n}(Q,f))\,.
\end{equation}
For $\Yinput = \Yinput_{p,q,K}$ and Algorithm~\ref{alg:A_kpmksh},
this is essentially the cost for estimating the \mbox{$p$-moment}.
Given any problem, we define its
\emph{minimal fixed cost}
by
\begin{align*}
  \bar{n}^{\fix}(\eps,\delta,\Yinput)
    &:= \inf\{n > 0 \mid \exists \;(\eps,\delta){\text{-approximating}\;} A\;
                          \text{with}\;  \bar{n}^{\fix}(A,\Yinput) \leq n
            \},\\
  (\text{resp.}\;
    \bar{n}^{\fix}(\eps,\delta,\Finput)
      & := \inf \{n > 0 \mid \exists \;(\eps,\delta){\text{-approximating}\;} Q\;
                            \text{with}\;  \bar{n}^{\fix}(Q,\Finput) \leq n
                \}),
\end{align*}
which is the fixed cost
that \emph{all} $(\eps,\delta)$-approximating %i.i.d.-based
%% Robert:
%% i.i.d.-based nicht überall mitführen, wenn Definitionen für beides gelten
algorithms exhibit,
not only those that start with a $p$-moment estimation,
see Theorem~\ref{thm:fixedcost} for~\mbox{$\Yinput = \Yinput_{p,q,K}$}.

Second, we intersect the cone shaped input sets~$\Yinput_{p,q,K}$
with (semi-)balls of radius~\mbox{$\tau > 0$}
with respect to the centered $L_{\wt{q}}$-norm,
$\wt{q} := \min\{q,2\}$, that is,
\begin{align*}
  \tau\Barrel_{\wt{q}}
    & := \{Y \in L_{\wt{q}} \mid \|Y - \expect Y\|_{\wt{q}} \leq \tau\} \,,\\
 (\text{resp.}\; \tau\Barrel_{\wt{q}}
    & := \{f \in L_{\wt{q}} \mid \|f - \Int(f)\|_{\wt{q}} \leq \tau\} \,,)
\end{align*}
and consider the worst case complexity,
\begin{equation*}
  \bar{n}^{\wor}(\eps,\delta,\Yinput_{p,q,K} \cap \tau\Barrel_{\wt{q}}),
  \qquad (\text{resp.}\;
  \bar{n}^{\wor}(\eps,\delta,\Finput_{p,q,K} \cap \tau\Barrel_{\wt{q}}),)
\end{equation*}
which is the worst case cost of optimal algorithms
that are adapted to this reduced problem.
With
  \mbox{$\|Y - \expect Y\|_1 \leq \|Y - \expect Y\|_{\wt{q}} \leq \tau$},
  our particular algorithm~$A_{\eps,\delta}^{p,q,K}$,
  see Theorem~\ref{thm:costbound},
  provides an upper bound for this quantity.
  Lower bounds are given
  in Theorems~\ref{thm:LB-YcapB2} and~\ref{thm:LB-YcapBq}.

\subsection{Unsolvability results and fixed cost bounds}
\label{sec:Y-LBs0}

We start with the statement
that higher integrability alone is not a sufficient assumption,
the corresponding problem is unsolvable.

\begin{thm}[Unsolvability for the whole space~$L_q$]
\label{thm:Lq--noway}
  The problem~\eqref{eq:E(Y)} of computing the expectation via i.i.d.-based
  algorithms is unsolvable for the class~$\Yinput_q = L_q$
  of $q$-integrable random variables, where~$q \in [1,\infty]$.
\end{thm}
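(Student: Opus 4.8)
The plan is to exhibit one pair $(\eps,\delta)$ for which no i.i.d.-based algorithm can succeed, by constructing two random variables in $L_q$ whose runs the algorithm cannot distinguish with substantial probability although their expectations differ. Fix $\eps = 1$ and $\delta = 1/4$, and suppose toward a contradiction that $A$ is an $(\eps,\delta)$-approximating algorithm on $\Yinput_q = L_q$, with auxiliary randomness $U$, output function $\phi$, and stopping rule $n$ as in Definition~\ref{def:alg}. Let $Y_0 \equiv 0$, which lies in $L_q$ with $\expect Y_0 = 0$, and for a small parameter $p_0 \in (0,1)$ let $Y_{p_0}$ take the value $3/p_0$ with probability $p_0$ and the value $0$ with probability $1-p_0$. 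Then $\|Y_{p_0}\|_q^q = (3/p_0)^q\,p_0 < \infty$ for $q < \infty$, and $\|Y_{p_0}\|_\infty = 3/p_0 < \infty$, so $Y_{p_0} \in L_q$ for every $q \in [1,\infty]$, while $\expect Y_{p_0} = 3$.

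Running $A$ on $Y_0$, every observed sample equals $0$ almost surely, so the stopping time is $n(\omega, Y_0) = \nu(U(\omega))$, where $\nu(u)$ denotes the value of the stopping rule on the all-zero sequence (a measurable function of $u$). Since $n(\,\cdot\,, Y_0)$ is almost surely finite, there is $N_0 \in \N$ with $\P\{\nu(U) \leq N_0\} \geq 1 - \delta'$, for a constant $\delta' \in (0,1)$ to be fixed later. Now run $A$ on $Y_{p_0}$ and consider the event $E := \{\nu(U) \leq N_0\} \cap \{Y_1 = \cdots = Y_{N_0} = 0\}$; since $U$ is independent of the i.i.d.\ sample, $\P(E) \geq (1-\delta')(1-p_0)^{N_0}$, which exceeds $1 - 2\delta'$ once $p_0$ is chosen small enough. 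On $E$ the first $N_0$ observations are zero and $\nu(U) \le N_0$, so the defining property of the stopping rule (the event $\{n \geq i\}$ depends only on $u,y_1,\dots,y_{i-1}$) forces the stopping decisions for $Y_{p_0}$ to coincide with those for $Y_0$, hence $n(\omega, Y_{p_0}) = \nu(U(\omega)) \leq N_0$; consequently $A(\omega, Y_{p_0}) = \phi(U(\omega), 0, 0, \ldots) = A(\omega, Y_0)$ on $E$.

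Finally, combine this coupling with the assumed accuracy guarantee. Let $G := \{|A(Y_0)| \leq \eps\}$, so that $\P(G) \geq 1 - \delta$. On $E \cap G$ we have $|A(Y_{p_0}) - \expect Y_{p_0}| = |A(Y_0) - 3| \geq 3 - |A(Y_0)| \geq 3 - \eps = 2 > \eps$, i.e.\ $A$ fails on $Y_{p_0}$ there. Therefore $\P\{|A(Y_{p_0}) - \expect Y_{p_0}| \leq \eps\} \leq 1 - \P(E \cap G) \leq (1-\P(E)) + (1-\P(G)) \leq 2\delta' + \delta$, which is strictly smaller than $1 - \delta = 3/4$ provided $\delta' < (1-2\delta)/2 = 1/4$; choosing $\delta' = 1/8$ contradicts the $(\eps,\delta)$-approximation property of $A$. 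The step requiring the most care is the coupling in the second paragraph: one must verify carefully, using that $\ind[n \geq i]$ depends only on $(u,y_1,\dots,y_{i-1})$ and that $U$ is independent of $(Y_i)_{i\in\N}$, that on $E$ the entire trajectory of $A$ on $Y_{p_0}$ -- stopping time and output alike -- is literally identical to its trajectory on $Y_0$; the probabilistic bookkeeping afterwards is routine.
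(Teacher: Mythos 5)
Your proposal is correct and follows essentially the same route as the paper: both arguments exploit that an a.s.\ terminating algorithm uses at most $n_0$ samples on the zero random variable with high probability, and then couple it with a two-valued variable taking a large value $\asymp 1/p_0$ with tiny probability $p_0$, so that with probability close to one the algorithm sees only zeros and cannot distinguish the two inputs whose means differ by more than $2\eps$. The only cosmetic difference is that you fix $\eps=1$, $\delta=1/4$ and argue by explicit contradiction, while the paper keeps $\eps$ and $\delta\in(0,1/2)$ general and passes to the limits $p\to 0$, $n_0\to\infty$; your careful verification of the stopping-rule coupling is exactly the point the paper treats via its conditional-probability chain.
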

\begin{proof}
  Let~$A$ be an arbitrary i.i.d.-based algorithm for~$\Yinput_q$,
  further let~$\eps > 0$ and $\delta \in (0,1)$.
  By definition, the i.i.d.-based algorithm $A$ terminates almost
  surely for any input, in particular, 
  for the zero random variable~\mbox{$\mathit{0} \in \Yinput_q$}.
  Hence, for the stopping time $n(\checked{\omega},\mathit{0})$ of $A$
  with input $\mathit{0}$
  we have
  \begin{equation*}
    \P\{n(\,\cdot\,,\mathit{0}) > n_0\}
      \xrightarrow[n_0 \to \infty]{} 0 \,.
  \end{equation*}
%  Choose~$n_0$ such that \mbox{$\P\{n(\,\cdot\,,\mathit{0}) > n_0\} < \delta/2$}.
  
  Consider Bernoulli random variables~$Z$ with probabilities
  \mbox{$\P\{Z = 0\} = 1-p$} and \mbox{$\P\{Z = 1/p\} = p \in (0,1)$},
  hence~$\expect Z = 1$.
  For any~$a > 0$ we have \mbox{$a Z \in \Yinput_q$}
  since it is a discrete random variable.
  We choose~$a > 2\eps$
  such that~\mbox{$|A(Y) - a| \leq \eps$} and \mbox{$|A(Y)| \leq \eps$}
  are disjoint events.
  For i.i.d.\ copies~\mbox{$Z_1,Z_2,\ldots \sim Z$} we have
  \begin{equation*}
    \P\{Z_1 = \ldots = Z_{n_0} = 0\} = (1-p)^{n_0}
      \xrightarrow[p \to 0]{} 1 \,.
  \end{equation*}
%  hence we can choose~$p$
%  such that \mbox{$\P\{Z_1,\ldots,Z_{n_0} = 0\} \geq 1 - \delta/4$}.
  Now, consider the failing probability for a random variable~$aZ$,
  \begin{align*}
    \P\{|A(aZ) - &a| > \eps\}\\
      &\;\geq\;
        \P\{n(\,\cdot\,,aZ) \leq n_0 \text{ and } Z_1=\ldots=Z_{n_0} = 0\}\\
      &\qquad
        \cdot \P(|A(aZ) - a| > \eps
                 \mid n(\,\cdot\,,aZ) \leq n_0 \text{ and } Z_1=\ldots=Z_{n_0} = 0)
                   \\
      &\;\geq\;
        \P\{n(\,\cdot\,,\mathit{0}) \leq n_0\}
          \cdot \P\{Z_1=\ldots=Z_{n_0} = 0\} \\
      &\qquad
          \cdot \P(|A(\mathit{0})| \leq \eps
                  \mid n(\,\cdot\,,\mathit{0}) \leq n_0) \\
      &\;=\;
        \P\{|A(\mathit{0})| \leq \eps \text{ and }
            n(\,\cdot\,,\mathit{0}) \leq n_0\}
          \cdot \P\{Z_1=\ldots=Z_{n_0} = 0\} \\
      &\;\geq\;
        (\P\{|A(\mathit{0})| \leq \eps\} - \P\{n(\,\cdot\,,\mathit{0}) > n_0\})
          \cdot (1-p)^{n_0} \\
      &\xrightarrow[p\to 0]{}
        \P\{|A(\mathit{0})| \leq \eps\} - \P\{n(\,\cdot\,,\mathit{0}) > n_0\} \\
      &\xrightarrow[n_0 \to \infty]{} \P\{|A(\mathit{0})| \leq \eps\} \,.
%      &\;>\;
%        \left(1-\frac{\delta}{2}\right)
%          \left(1-\frac{\delta}{4}\right)
%            \cdot \P(|A(\mathit{0})| \leq \eps
%                     \mid n(\,\cdot\,,\mathit{0}) \leq n_0) \,.
  \end{align*}
  Hence, for any admitted uncertainty~$\delta \in (0,1/2)$,
  if~$\P\{|A(\mathit{0})| \leq \eps\} \geq 1-\delta > 1/2$,
  there are random variables of type~$aZ$ for which
  \mbox{$\P\{|A(aZ) - a| > \eps\} > 1/2 > \delta$}.
  This shows that the algorithm cannot be $(\eps,\delta)$-approximating.
\end{proof}

We now turn to more restricted input classes.
First, we stress that for the cone-shaped input classes~$\Yinput_{p,q,K}$
the worst case
\emph{expected error} is unbounded for any algorithm.
(A fortiori, the same does hold for the mean squared error.)
\begin{thm}[Impossibility of \checked{finite} worst case expected error]
  %$L_r$ error] 
  \label{thm:E(err)<eps--noway}
  For any i.i.d.-based algorithm $A$ on $\Yinput_{p,q,K}$,
  with $1 \leq p < q \leq \infty$ and~$K \geq 1$, we have
  \begin{equation*}
   \sup_{Y \in \Yinput_{p,q,K}} \expect|A(Y) - \expect Y|
                                    %\|\expect Y - A(Y)\|_r
        = \infty \,.
  \end{equation*}
\end{thm}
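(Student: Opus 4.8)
The plan is to exploit the scale invariance of the cone: since $aZ - \expect(aZ) = a\,(Z-\expect Z)$, any $Z \in \Yinput_{p,q,K}$ satisfies $aZ \in \Yinput_{p,q,K}$ for every $a>0$, while $\expect(aZ) = a\,\expect Z$ grows without bound. So it suffices to exhibit one fixed basis random variable~$Z$ and one fixed positive probability, not depending on~$a$, on which the algorithm's output on input~$aZ$ stays bounded; on that event the error is at least $a\,|\expect Z|$ minus a constant. Note that the two-point inputs of Theorem~\ref{thm:Lq--noway} cannot be reused here: letting the mass at~$0$ tend to~$1$ drives the ratio $\|Z-\expect Z\|_q / \|Z-\expect Z\|_p$ to infinity, so those variables leave the cone, which is precisely why the present construction must differ.

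Concretely, I would take the symmetric two-valued~$Z$ with $\P\{Z=0\} = \P\{Z=2\} = \tfrac12$, so that $\expect Z = 1$ and $Z - \expect Z \in \{-1,+1\}$ almost surely. Then $\|aZ - \expect(aZ)\|_r = a$ for every $r \in [1,\infty]$ and every $a>0$, so $aZ \in \Yinput_{p,q,K}$ for all $q \in (p,\infty]$ (including $q=\infty$, as $aZ$ is bounded) because $K \ge 1$. Since the zero random variable~$\mathit{0}$ lies in $\Yinput_{p,q,K}$ (the cone condition reads $0 \le K\cdot 0$), Definition~\ref{def:alg} guarantees that the stopping time $n(\,\cdot\,,\mathit{0})$ of the given i.i.d.-based algorithm~$A$ is almost surely finite; I fix $n_0$ with $\P\{n(\,\cdot\,,\mathit{0}) \le n_0\} \ge \tfrac12$, set $W := A(\mathit{0})$, an almost surely finite random variable, and pick $T$ with $\P\{|W| \le T\} \ge \tfrac34$.

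The heart of the argument is a coupling of the runs of~$A$ on the inputs $aZ$ and~$\mathit{0}$ on one probability space, sharing the auxiliary variable~$U$, where $aZ_i = a\,Z_i$ for an i.i.d.\ sequence $(Z_i)$ of copies of~$Z$ independent of~$U$. On the event $E := \{Z_1 = \cdots = Z_{n_0} = 0\}$, which has probability $2^{-n_0}$ and is independent of~$U$, intersected with $F := \{n(U,\mathit{0}) \le n_0\}$, the defining property of a stopping rule --- that $\ind[n(u,(y_j)_j) \ge i]$ depends only on $(u,y_1,\ldots,y_{i-1})$ --- forces the run on $aZ$ to make the same continue/stop decisions as the run on~$\mathit{0}$ up to time~$n_0$, so it stops at the same time $t = n(U,\mathit{0}) \le n_0$ having seen only zeros, and outputs $\phi(U,0,\ldots,0,0,\ldots) = A(\mathit{0}) = W$. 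Since $E \in \sigma((Z_i))$ is independent of the $\sigma(U)$-measurable event $\{|W|\le T\}\cap F$, we obtain $\P(\{|W|\le T\}\cap E\cap F) \ge 2^{-n_0}\bigl(1 - \tfrac14 - \tfrac12\bigr) = 2^{-n_0-2} =: c_0 > 0$. On this event $|A(aZ) - \expect(aZ)| = |W - a| \ge a - T$, hence $\expect|A(aZ) - \expect(aZ)| \ge c_0\,(a - T) \xrightarrow[a\to\infty]{} \infty$, which proves the claim.

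The main obstacle is making the coupling step fully rigorous: one has to unwind the measurability conditions of Definition~\ref{def:alg} to verify that, on $E \cap F$, the stopping time for input~$aZ$ actually equals $n(U,\mathit{0})$ (not merely that it is $\le n_0$) and that $\phi$ is then evaluated at the all-zeros argument. A secondary point needing care is the dependence bookkeeping: $W$ and~$F$ are functions of~$U$ whereas~$E$ is a function of~$(Z_i)$, so the lower bound on $\P(\{|W|\le T\}\cap E\cap F)$ must be assembled by factoring out the independent event~$E$ rather than by a crude union bound on three unrelated events.
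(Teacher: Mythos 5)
Your proposal is correct and follows essentially the same route as the paper: the same two-point variable $Z$ with $\P\{Z=0\}=\P\{Z=2\}=1/2$, the same reduction to the event that all observed samples of $aZ$ vanish so the algorithm cannot distinguish $aZ$ from the zero input, and the same use of the almost sure finiteness of $n(\,\cdot\,,\mathit{0})$. The only cosmetic difference is that you control the bounded output $A(\mathit{0})$ by truncating at a level $T$, whereas the paper avoids this via the symmetrization $\max\{\expect|A(\mathit{0})|,\expect|A(aZ)-a|\}\geq\frac{1}{2}\expect|A(\mathit{0})-A(aZ)+a|\geq\frac{|a|}{2}\P\{A(\mathit{0})=A(aZ)\}$.
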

\begin{proof}
  Consider a random variable~$Z$ with
  \mbox{$\P\{Z = 0\} = \P\{Z = 2\} = 1/2$} and
  observe that~$\expect Z = 1$ as well as $\|Z-1\|_p = 1$
  for all~\mbox{$1 \leq p \leq \infty$}.
  Consequently, for any $a\in \R$ 
  we have~\mbox{$a Z \in \Yinput_{p,q,K}$}
  for all~\mbox{$1 \leq p<q \leq \infty$} and \mbox{$K \geq 1$}.
  We reduce the problem to distinguishing $aZ$
  from the zero random variable \mbox{$\mathit{0} \in \Yinput_{p,q,K}$},
  \begin{align*}
    \sup_{Y \in \Yinput_{p,q,K}} \expect|A(Y)- \expect Y|
                                     %\|\expect Y - A(Y)\|_r 
     & \geq \max\{\expect |A(\mathit{0})|,\expect|A(aZ) - a|\} \\
     & \geq \frac{1}{2} \, \expect| A(\mathit{0}) - A(aZ) + a | \,.
  \end{align*}
  By definition, the i.i.d.-based algorithm $A$ terminates almost
  surely for any input, in particular, 
  for the zero random variable~\mbox{$\mathit{0} \in \Yinput_{p,q,K}$}.
  Hence, for the stopping time
  $n(\checked{\omega},\mathit{0})$ of $A$ with input $\mathit{0}$, 
  there exists an $n_0\in \N$ such that
  \mbox{$\P\{n(\,\cdot\,,\mathit{0}) \leq n_0\} > 0$}.
  Furthermore,
  \begin{align*}
   \expect| A(\mathit{0}) - A(aZ) + a |
%     &\;\geq\;
%       |a| \cdot \P\{A(\omega,\mathit{0}) = A(\omega,aZ)\} \\
     &\;\geq\;
       |a| \cdot \P\{A(\mathit{0}) = A(aZ)\} \\
     &\;\geq\;
       | a| \cdot
               \P\left\{ n(\,\cdot\,,\mathit{0})\leq n_0 \;\text{ and }\; 
                         Z_1=\ldots=Z_{n_0}=0\right\}\\
     &\;=\;
       | a|
         \cdot \P\{n(\,\cdot\,,\mathit{0})\leq n_0\} 
         \cdot \P\{Z_1=\ldots=Z_{n_0}=0\}.
  \end{align*}
  Finally, the assertion follows 
  by the fact that $\P\{Z_1=\ldots=Z_{n_0}=0\}=2^{-n_0}$ and
  \begin{equation*}
    \sup_{Y \in \Yinput_{p,q,K}} \expect|A(Y) - \expect Y | 
      \;\geq\;
       \frac{| a | }{2^{n_0+1}} \, \P\{n(\,\cdot\,,\mathit{0})\leq n_0\}
      \xrightarrow[| a| \to \infty]{} \infty.
  \qedhere
  \end{equation*}
\end{proof}

The previous theorem
tells us that certain standard worst case error notions %, %terms,
such as the expected absolute error or the mean squared error 
are not suitable for cone-shaped input classes~$\Yinput_{p,q,K}$.
This is the reason for us to focus on the concept of
``small error with high probability''.
We start with an auxiliary result.

\begin{lem} \label{lem:Bernoulli,K>>1}
For a Bernoulli random variable~$Y$
  with \mbox{$\P\{Y = 0\} = 1 - a$}
  and \mbox{$\P\{Y = 1\} = a \in (0,1/2]$}
  satisfying
  \begin{equation} \label{eq: gen_a} 
    a \geq K^{-pq/(q-p)}
    \qquad \text{or} \qquad
    a = \fracts{1}{2} \,,
  \end{equation}
  we have \mbox{$Y \in \Yinput_{p,q,K}$}
  for~$1 \leq p < q \leq \infty$ and~$K \geq 1$.
\end{lem}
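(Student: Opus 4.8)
The plan is to pass to the centered variable $Z := Y - \expect Y$ and verify the defining inequality $\|Z\|_q \le K\,\|Z\|_p$ of $\Yinput_{p,q,K}$ directly. Since $\expect Y = a$, the variable $Z$ attains the value $1-a$ with probability $a$ and the value $-a$ with probability $1-a$. Writing $b := 1-a$, the assumption $a \in (0,1/2]$ gives $a \le \tfrac12 \le b$, hence $|Z| \le b$ almost surely, and in particular $\|Z\|_\infty = b$ (both values of $|Z|$ occur with positive probability, and $a \le b$). Everything reduces to a comparison of two explicit Bernoulli moments, so no locality of the input is used; it is exactly the elementary fact that a two-point distribution has controlled dispersion.

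First I would dispose of the case $a = \tfrac12$. Then $b = a = \tfrac12$, so $|Z| \equiv \tfrac12$ is deterministic, all $L_r$-norms of $Z$ coincide, and $\|Z\|_q = \|Z\|_p \le K\,\|Z\|_p$ because $K \ge 1$. This case is genuinely needed since for $K$ close to $1$ the threshold $K^{-pq/(q-p)}$ exceeds $\tfrac12$. For the main case $a \ge K^{-pq/(q-p)}$ I would use two one-sided estimates. For a lower bound on $\|Z\|_p$, keep a single summand of $\expect|Z|^p = a\,b^p + (1-a)\,a^p \ge a\,b^p$, which gives $\|Z\|_p \ge a^{1/p}\,b$. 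For an upper bound on $\|Z\|_q$ with $q < \infty$, exploit $|Z| \le b$ through $|Z|^q \le b^{\,q-p}\,|Z|^p$, so that $\|Z\|_q \le b^{\,1-p/q}\,\|Z\|_p^{\,p/q}$. Combining the two estimates,
$$
\frac{\|Z\|_q}{\|Z\|_p}
  \;\le\; \left(\frac{b}{\|Z\|_p}\right)^{1-p/q}
  \;\le\; \bigl(a^{-1/p}\bigr)^{1-p/q}
  \;=\; a^{-(q-p)/(pq)}
  \;\le\; K \,,
$$
the last step being the hypothesis $a \ge K^{-pq/(q-p)}$ raised to the power $-(q-p)/(pq)$. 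The case $q = \infty$ is the limiting version of the same computation: $\|Z\|_\infty = b = a^{-1/p}\cdot a^{1/p}\,b \le a^{-1/p}\,\|Z\|_p \le K\,\|Z\|_p$, since for $q = \infty$ the hypothesis reads $a \ge K^{-p}$.

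There is no real obstacle; the only things to watch are the bookkeeping with the exponents $1/p$, $1/q$ and $(q-p)/(pq)$, and treating $q = \infty$ consistently with the finite case. It is worth remarking that retaining just one term of the $L_p$-moment already suffices, so no sharper analysis of the Bernoulli moments is necessary.
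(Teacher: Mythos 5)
Your proof is correct and follows essentially the same route as the paper: both arguments reduce to the bound $\|Z\|_q/\|Z\|_p \le (\|Z\|_\infty/\|Z\|_p)^{1-p/q} \le a^{-(q-p)/(pq)}$, with the case $a=\tfrac12$ handled separately; you derive the interpolation step from the pointwise inequality $|Z|^q \le \|Z\|_\infty^{q-p}|Z|^p$ where the paper cites its Lemma~\ref{lem:RVprq}, and you keep one term of $\expect|Z|^p$ exactly as the paper does. No gaps.
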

\begin{proof}
  Clearly, $\expect Y = a$, such that
  we need to study 
  \begin{equation*}
    \kappa_{p,q}(a) := \frac{\|Y-a\|_q}{\|Y-a\|_p} \,,
    \qquad a \in (0,1/2] \,.
  \end{equation*}
  The case~$a=1/2$ is explained by
  $\kappa_{p,q}(1/2)=1$.
  For $a \in (0,1/2]$ we have
  \begin{equation*}
    \kappa_{p,\infty}(a)
      = \frac{1-a}{\left((1-a) a^p + a(1-a)^p\right)^{1/p}}
      \leq a^{-1/p} \,.
  \end{equation*}
  For~$p < q < \infty$ we interpolate the $L_q$-norm,
  see Lemma~\ref{lem:RVprq},
  \begin{equation*}
    \kappa_{p,q}(a)
      \leq (\kappa_{p,p}(a))^{1-\lambda}
            (\kappa_{p,\infty}(a))^{\lambda} \,,
  \end{equation*}
  where~$\lambda = (q-p)/q$,
  and \mbox{$\kappa_{p,p}(a) = 1$} by definition.
  Hence
    $\kappa_{p,q}(a) \leq a^{-(q-p)/(pq)}$. %-(1/p-1/q) 
  By inversion we obtain the sufficient condition on~$a$
  such that \mbox{$\kappa_{p,q}(a) \leq K$},
  which is the cone condition~$Y \in \Yinput_{p,q,K}$.
\end{proof}

The next theorem shows that -- up to a $q$-dependent constant --
the fixed cost for the first stage of
Algorithm~\ref{alg:A_kpmksh} is really needed,
compare the term in Theorem~\ref{thm:costbound}
that is independent of the error threshold~$\eps$.

\begin{thm}[Fixed cost] \label{thm:fixedcost}
%  For any $(\eps,\delta)$-approximating
%  i.i.d.-based algorithm~$A$ on~$\Yinput_{p,q,K}$,
%  with~\mbox{$1 \leq p < q \leq \infty$} and~$K > 1$,
%  we have the following fixed cost lower bound 
  For any~$\eps > 0$ and $\delta \in (0,1)$,
  we have the following fixed cost lower bound
  for i.i.d.-based algorithms on~$\Yinput_{p,q,K}$
  with~\mbox{$1 \leq p < q \leq \infty$} and~$K > 1$,
  \begin{equation*}
%    \bar{n}^{\fix}(A,\Yinput_{p,q,K})
    \bar{n}^{\fix}(\eps,\delta,\Yinput_{p,q,K})
      \,\geq\, \frac{1}{2 \log 2} \, K^{pq/(q-p)} \, \log \delta^{-1} \,.
  \end{equation*}
\end{thm}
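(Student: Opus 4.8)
The plan is to show that \emph{every} $(\eps,\delta)$-approximating i.i.d.-based algorithm $A$ on $\Yinput_{p,q,K}$ must satisfy $\bar{n}(A,Y)\geq \frac{1}{2\log 2}\,K^{pq/(q-p)}\,\log\delta^{-1}$ for \emph{each individual} input $Y\in\Yinput_{p,q,K}$; taking first the infimum over $Y$ (which gives $\bar n^{\fix}(A,\Yinput_{p,q,K})$) and then over all $(\eps,\delta)$-approximating algorithms yields the asserted bound on $\bar n^{\fix}(\eps,\delta,\Yinput_{p,q,K})$. So I fix $A$ and an arbitrary $Y$ with mean $\mu:=\expect Y$, and put $a:=\min\{K^{-pq/(q-p)},\tfrac12\}$. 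The idea is to confront $A$ with a second input $Y'\in\Yinput_{p,q,K}$ whose mean is pushed far away from $\mu$, but whose i.i.d.\ samples coincide with those of $Y$ unless a rare ``spike'' is drawn; if $A$ happens to stop before any spike appears it must return the same number for $Y'$ as for $Y$, and this number can be $\eps$-correct for at most one of the two inputs.

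For the partner I would take $Y'$ with the mixture law $(1-a)\,\mathcal L(Y)+a\,\delta_v$ for a large spike value $v>0$: with probability $1-a$ it is a fresh copy of $Y$, with probability $a$ it equals $v$. Then $\expect Y'=(1-a)\mu+av\to\infty$ as $v\to\infty$, so for $v$ large $|\expect Y'-\mu|>2\eps$. For membership in the cone one computes $\expect|Y'-\expect Y'|^r=(1-a)\,\expect|(Y-\mu)-a(v-\mu)|^r+a(1-a)^r|v-\mu|^r$ for $r\in\{p,q\}$; dividing by $(v-\mu)^r$ and using $Y-\mu\in L_q\subseteq L_r$ together with dominated convergence shows that $\|Y'-\expect Y'\|_q/\|Y'-\expect Y'\|_p$ converges, as $v\to\infty$, to the same ratio $\kappa_{p,q}(a)$ for a Bernoulli variable with parameter $a$ (for $q=\infty$ one instead uses that $Y\in\Yinput_{p,\infty,K}$ is essentially bounded). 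By the computation inside the proof of Lemma~\ref{lem:Bernoulli,K>>1} this limit satisfies $\kappa_{p,q}(a)<K$ \emph{strictly} at $a=K^{-pq/(q-p)}$ (the bound $a^{-(q-p)/(pq)}=K$ there is never attained), so $Y'\in\Yinput_{p,q,K}$ once $v$ is large enough; I fix such a $v$, large enough in addition that $\expect Y'-\eps$ exceeds a threshold $L$ chosen below.

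Next I couple the two runs on a common space: let $U$ be the auxiliary variable, $\widetilde Y_1,\widetilde Y_2,\dots\sim Y$ i.i.d., and $B_1,B_2,\dots\sim\mathrm{Bernoulli}(a)$ i.i.d., all independent; feed $A$ the data $(\widetilde Y_i)_i$ for input $Y$ and $(\widetilde Y_i\,\ind\{B_i=0\}+v\,\ind\{B_i=1\})_i$ for input $Y'$. On the event $E=\{B_1=\dots=B_{n(\,\cdot\,,Y)}=0\}$ the predictable stopping rule and the output function see identical data in both runs, so $A(Y')=A(Y)$ and $n(\,\cdot\,,Y')=n(\,\cdot\,,Y)$. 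Conditioning on everything except the $B_i$ and then applying Jensen's inequality to the convex map $x\mapsto(1-a)^x$ gives $\P(E)=\expect[(1-a)^{n(\,\cdot\,,Y)}]\geq(1-a)^{\bar n(A,Y)}$ (trivial if $\bar n(A,Y)=\infty$). Since $A(Y)$ is almost surely finite, I choose $L$ with $\P\{|A(Y)|>L\}\leq\delta'$ for an arbitrarily small $\delta'>0$; then on $E\cap\{|A(Y)|\leq L\}$ the shared output is $\leq L<\expect Y'-\eps$, hence $\eps$-incorrect for $Y'$. As $A$ is $(\eps,\delta)$-approximating on $Y'$, this forces $\delta\geq\P(E)-\P\{|A(Y)|>L\}\geq(1-a)^{\bar n(A,Y)}-\delta'$; letting $\delta'\to 0$ gives $(1-a)^{\bar n(A,Y)}\leq\delta$, i.e.\ $\bar n(A,Y)\geq\log\delta^{-1}/(-\log(1-a))$. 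The elementary estimate $-\log(1-a)\leq(\log 4)\,a$ for $a\in(0,\tfrac12]$ (both sides vanish at $a=0$ and $a=\tfrac12$, and their difference is concave in between) then yields $\bar n(A,Y)\geq\frac{1}{2\log 2}\,a^{-1}\log\delta^{-1}$, which is exactly the claim when $a=K^{-pq/(q-p)}$, while for $K^{pq/(q-p)}<2$ the choice $a=\tfrac12$ gives the even larger bound $\frac{1}{\log 2}\log\delta^{-1}$.

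The step I expect to be the main obstacle is the verification that $Y'$ stays in $\Yinput_{p,q,K}$ while $a$ is kept as large as $K^{-pq/(q-p)}$ — this is exactly what pins down the correct power of $K$, and it needs both the limiting central-moment computation and the strict inequality $\kappa_{p,q}(K^{-pq/(q-p)})<K$ extracted from the proof of Lemma~\ref{lem:Bernoulli,K>>1}, so that a \emph{finite} spike value $v$ already suffices. A secondary point requiring care is that $E$ must be intersected with the mild event $\{|A(Y)|\leq L\}$ rather than with the (only slightly larger) event that $A$ is correct on $Y$; intersecting with the latter would cost an additional $\delta$ and degrade $\log\delta^{-1}$ to $\log(2\delta)^{-1}$. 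Everything else — the coupling bookkeeping, Jensen, and the logarithmic estimate — is routine.
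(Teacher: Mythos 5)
Your proof is correct and follows essentially the same strategy as the paper's: reduce to detecting a rare Bernoulli-type perturbation of weight $a=\min\{K^{-pq/(q-p)},\tfrac12\}$, bound the probability of never observing it by $(1-a)^{\bar n(A,Y)}$ via Jensen, push the alternative mean beyond the essential range of $A(Y)$, and finish with the same estimate $\log(1-a)^{-1}\le 2(\log 2)\,a$. The only (cosmetic) difference is that you realize the adversarial partner as a mixture $(1-a)\,\mathcal L(Y)+a\,\delta_v$, which forces you to verify cone membership by an explicit central-moment limit, whereas the paper takes the additive perturbation $Y'=Y+cZ$ with $Z$ an independent Bernoulli, for which membership follows from a one-line triangle inequality as $c\to\infty$.
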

\begin{proof}
  Let $Y \in \Yinput_{p,q,K}$ and $A$ be an $(\eps,\delta)$-approximating 
  i.i.d.-based sampling algorithm with stopping time $n(\checked{\omega},Y)$. 
  To shorten the notation let
  \mbox{$\bar{n} := \bar{n}(A,Y)$} be the expected cardinality for~$Y$.
  
  Consider a Bernoulli random variable~$Z$
%  independent of~$Y$,
%% Robert: Unabhängigkeit wird nicht gebraucht.
  with probabilities \mbox{$\P\{Z = 1\} = a$} and
  \mbox{$\P\{Z = 0\} = 1 - a$},
  where \mbox{$a=\expect Z \in (0,1/2]$} is chosen such that
  \begin{equation*}
    \|Z - \expect Z\|_q < K \, \|Z - \expect Z\|_p \,.
  \end{equation*}
  For~$c > 0$ let $Y' := Y + c \, Z$.
  Observe that for sufficiently large~$c$ we have
  \begin{equation*}
    \frac{\|Y' - \expect Y'\|_q}{\|Y' - \expect Y'\|_p}
      \stackrel{\text{$\Delta$-ineq.}}{\leq}
        \frac{c \, \|Z - \expect Z\|_q + \|Y - \expect Y\|_q
            }{c \, \|Z - \expect Z\|_p - \|Y - \expect Y\|_p}
      \xrightarrow[c \to \infty]{}
        \frac{\|Z - \expect Z\|_q}{\|Z - \expect Z\|_p}
      \,<\, K \,,
  \end{equation*}
  hence, there is a number~$c_0 > 0$ such that
  \mbox{$Y' \in \Yinput_{p,q,K}$} for~\mbox{$c > c_0$}.

  There is a certain chance
  that $A$ cannot distinguish $Y$ from~$Y'$,
  namely,
  if~$Z_i=0$ for all generated i.i.d.\ samples \mbox{$Y'_i = Y_i + c \, Z_i$}.
  Via Jensen's inequality and by
  exploiting the fact that the function~\mbox{$x \mapsto (1-a)^x$} is convex,
  we have
  \begin{align*}
    \P\{Z_1 = \ldots = Z_{n(\,\cdot\,,Y)} = 0\}
      &= \sum_{n=0}^{\infty} \P\{n(\,\cdot\,,Y) = n
          \;\text{ and }\;
        Z_1 = \ldots = Z_n = 0\} \\
      &= \sum_{n=0}^{\infty} \P\{n(\,\cdot\,,Y) = n\} \, (1-a)^n 
      \geq (1-a)^{\bar{n}} \,.
  \end{align*}
  The algorithm~$A$ returns only finite values, thus, for any 
  $\theta\in(0,(1-a)^{\bar{n}})$
  we find a value~$b \in \R$ such that
  \begin{equation*}
    \P\{A(Y) \leq b \;\text{ and }\; Z_1 = \ldots = Z_{n(\,\cdot\,,Y)} = 0\}
      \geq (1-a)^{\bar n}-\theta \,.
  \end{equation*}
  We want to compute
  \mbox{$\expect Y' = \expect Y + c \, a$},
  which can be arbitrarily large when~$c \to \infty$. Now,
  choose $c$ such that $\expect  Y'-b>\eps$ 
  for an arbitrary error threshold~$\eps > 0$. Then
  \begin{align*}
   \P\left\{ | A(Y') - \expect Y'| >\eps \right\}
   & \geq 
   \P\{A(Y) \leq b \;\text{ and }\; Z_1 = \ldots = Z_{n(\,\cdot\,,Y)} = 0\}\\
   &\geq (1-a)^{\bar n}-\theta. 
  \end{align*}
        Letting $\theta\to 0$ leads to   
        $\P\left\{ | A(Y') - \expect Y'| >\eps \right\} \geq 
        (1-a)^{\bar n}$ such that
  the uncertainty~$\delta$ is at least~$(1-a)^{\bar{n}}$.
  In other words,
  \begin{equation*}
    \bar{n} \,\geq\, \frac{\log \delta^{-1}}{\log(1-a)^{-1}}
      \,\geq\, \frac{1}{2 \log 2} \, a^{-1} \, \log \delta^{-1} \,.
  \end{equation*}
  Here, we used \mbox{$\log (1-a)^{-1} \leq 2 (\log 2) \, a$}
  for~\mbox{$a \in (0,1/2]$}.
  In particular with~\mbox{$a = 1/2$} this is
  \begin{equation} \label{eq:n0>clogd}
    \bar{n} \,\geq\, \frac{\log \delta^{-1}}{\log 2} \,.
  \end{equation}
  For~$K \geq 2^{(q-p)/(pq)}$ %$K \geq 2^{1/p-1/q}$
  we put~\mbox{$a = K^{-pq/(q-p)}$},
  see Lemma~\ref{lem:Bernoulli,K>>1},
  which gives
  \begin{equation} \label{eq:n0>cKlogd}
    \bar{n} \,\geq\, \frac{1}{2 \log 2} \, K^{pq/(q-p)} \, \log \delta^{-1} \,.
  \end{equation}
  Trivially, this holds true for smaller~$K > 1$ as well,
  since in these cases inequality~\eqref{eq:n0>cKlogd}
  is weaker than~\eqref{eq:n0>clogd}.
\end{proof}

\subsection{Worst case complexity for bounded central norm}
\label{sec:Y-worLBs}

In this section we provide lower bounds of the worst case complexity 
for i.i.d.-based algorithms on cone-shaped input classes intersected with
(semi-)balls. 
We start with an auxiliary result due to Wald,
see \cite[Theorem on p.~156]{Wa45}, 
which provides a lower bound for the complexity of statistical tests.

\begin{prop}[Wald~1945] \label{prop:Wald-probrat}
  Let $Y^{(1)}$ and $Y^{(2)}$ be two discrete random variables
  that only take values in a finite set $D \subset \R$,
  and consider the probability ratio
  as a function
  \begin{equation*}
    r(y) := \frac{\P\{Y^{(1)} = y\}}{\P\{Y^{(2)} = y\}}
    \qquad \text{for $y \in D$.}
  \end{equation*}
  Let~$T$ be an i.i.d.-based algorithm
%% Robert: Folgendes ergibt sich automatisch, gibt ja auch noch ne Bemerkung
%  that returns either~$1$ or $2$,
  satisfying for \mbox{$0 < \delta < 1/2$} that
  \begin{equation*}
    \P\{T(Y^{(i)}) \not= i\} \leq \delta \,, \qquad \text{for $i=1,2$.}
  \end{equation*}
  ($T$ is a statistical test
    for hypotheses $1$ vs.\ $2$ with confidence $1-\delta$.)
  Then
  \begin{equation*}
    \bar{n}(T,Y^{(i)})
      \geq \frac{(1-2\delta) \log \frac{1-\delta}{\delta}
                }{|\expect \log r(Y^{(i)})|}
                , \qquad \text{for $i=1,2$.}
  \end{equation*}
\end{prop}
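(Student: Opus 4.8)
The plan is to reprove Wald's bound in the classical way, via the log-likelihood-ratio random walk together with a change of measure, Jensen's inequality, and Wald's identity. Write $P_i$ for the law of $Y^{(i)}$ on $D$, and $\P_i$, $\expect_i$ for probability and expectation when the basis experiment is $Y^{(i)}$; let $Y_1,Y_2,\ldots$ be the i.i.d.\ copies of the input used by $T$, and put $\mathcal{F}_j := \sigma(U,Y_1,\ldots,Y_j)$, so that the stopping rule $n=n(\,\cdot\,,Y)$ is an $(\mathcal{F}_j)$-stopping time and the decision $T(Y)$ is $\mathcal{F}_n$-measurable. If the supports of $P_1$ and $P_2$ differ, one of $\expect\log r(Y^{(i)})$ is $\pm\infty$ and the corresponding bound is vacuous; if $P_1=P_2$ no $(\eps,\delta)$-distinguishing test exists for $\delta<\tfrac12$, so the statement is vacuous too. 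Hence I may assume $\supp P_1=\supp P_2$, which makes $\log r$ bounded, and also $\expect_i n<\infty$ (the bound being trivial otherwise). Set
\[
  S_n := \sum_{j=1}^{n} \log r(Y_j),
  \qquad
  \mu_i := \expect_i \log r(Y^{(i)}),
\]
noting $\mu_1 = D(P_1\|P_2) > 0$ and $\mu_2 = -D(P_2\|P_1) < 0$, so that $|\mu_i| = |\expect\log r(Y^{(i)})|$.

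First I would carry out the change of measure. Since $U$ has the same law under either hypothesis and is independent of the data, the density of $P_2$ with respect to $P_1$ on the stopped $\sigma$-algebra $\mathcal{F}_n$ is $e^{-S_n}$. For the decision event $E := \{T(Y)=1\} \in \mathcal{F}_n$ we have $\P_1(E)\ge 1-\delta$ and $\P_2(E)\le\delta$, and
\[
  \P_2(E) = \expect_1\!\bigl[\mathbf{1}_E\,e^{-S_n}\bigr]
          = \P_1(E)\,\expect_1\!\bigl[e^{-S_n}\,\big|\,E\bigr]
          \ge \P_1(E)\,\exp\!\bigl(-\expect_1[S_n\mid E]\bigr)
\]
by Jensen's inequality applied to $x\mapsto e^{-x}$, and the analogous bound holds on $E^c$. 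Taking logarithms, multiplying by $\P_1(E)$ respectively $\P_1(E^c)$, and summing yields
\[
  \expect_1[S_n]
    \,\ge\, \P_1(E)\log\frac{\P_1(E)}{\P_2(E)}
        + \P_1(E^c)\log\frac{\P_1(E^c)}{\P_2(E^c)}
    \,=:\, d\bigl(\P_1(E)\,\|\,\P_2(E)\bigr),
\]
the Kullback--Leibler divergence between the two associated Bernoulli laws.

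Then I would use two elementary facts. First, $d(p\|q)$ is nondecreasing in $p\ge q$ and nonincreasing in $q\le p$; since $\P_1(E)\ge 1-\delta>\tfrac12>\delta\ge\P_2(E)$ this gives
\[
  d\bigl(\P_1(E)\,\|\,\P_2(E)\bigr)
    \,\ge\, d(1-\delta\,\|\,\delta)
    \,=\, (1-2\delta)\log\frac{1-\delta}{\delta},
\]
which is the source of the factor $1-2\delta$. Second, Wald's identity (legitimate because $\log r$ is bounded and $\expect_1 n<\infty$) gives $\expect_1[S_n]=\mu_1\,\expect_1 n$. Combining,
\[
  \bar{n}(T,Y^{(1)}) = \expect_1 n
    \,\ge\, \frac{(1-2\delta)\log\frac{1-\delta}{\delta}}{\mu_1}
    \,=\, \frac{(1-2\delta)\log\frac{1-\delta}{\delta}}{|\expect\log r(Y^{(1)})|},
\]
as claimed for $i=1$. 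For $i=2$ I would run the mirror-image argument: work with $-S_n$, whose increments have positive mean $|\mu_2|$, use the density $e^{S_n}$ of $P_1$ with respect to $P_2$, and take the event $E^c=\{T(Y)=2\}$, for which $\P_2(E^c)\ge 1-\delta$ and $\P_1(E^c)\le\delta$; this yields $|\mu_2|\,\expect_2 n\ge(1-2\delta)\log\frac{1-\delta}{\delta}$ and hence the bound for $i=2$.

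The change-of-measure and Jensen steps are routine. The point requiring care is the bookkeeping around the auxiliary randomization $U$ and the adaptive stopping time: one must check that $E$ and $E^c$ lie in the stopped $\sigma$-algebra $\mathcal{F}_n$, that the likelihood ratio on $\mathcal{F}_n$ is unaffected by $U$, and that Wald's identity applies to the randomly stopped sum; together with the degenerate cases (mismatched supports, $P_1=P_2$, infinite expected sample size), all of which are disposed of by the reductions above rather than by computation.
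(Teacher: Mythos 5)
Your argument is correct in substance, and it is worth noting that the paper itself gives no proof of this proposition: it is quoted as a black box from Wald's 1945 paper (the Theorem on p.~156 of \cite{Wa45}). What you have written is the standard modern derivation of that result -- change of measure on the stopped $\sigma$-algebra, Jensen's inequality to pass to the binary Kullback--Leibler divergence $d(\P_1(E)\,\|\,\P_2(E))$, its monotonicity to reduce to $d(1-\delta\,\|\,\delta)=(1-2\delta)\log\frac{1-\delta}{\delta}$, and Wald's identity to convert $\expect_i[S_n]$ into $\mu_i\,\expect_i n$. The points you flag as requiring care (measurability of the decision event with respect to $\mathcal{F}_n$, the role of the independent randomization $U$, applicability of Wald's identity to the adaptive stopping time) are all handled correctly: $U$ has the same law under both hypotheses and is independent of the data, so it cancels in the likelihood ratio, and the paper's definition of a stopping rule makes $\{n\geq j\}$ an $\mathcal{F}_{j-1}$-event, which is exactly what Wald's identity needs.

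The one place where your reduction is too quick is the case of unequal supports. From ``one of $\expect\log r(Y^{(i)})$ is $\pm\infty$ and the corresponding bound is vacuous'' you conclude that you may assume $\supp P_1=\supp P_2$; but if, say, $\supp P_2 \subsetneq \supp P_1$, only the $i=1$ bound is vacuous, the $i=2$ bound still has content, and $P_1\not\ll P_2$, so the ``density $e^{S_n}$ of $P_1$ with respect to $P_2$'' invoked in your mirror argument does not exist. The fix is immediate and should be stated: for any $A$ in the stopped $\sigma$-algebra one still has the one-sided bound $\P_1(A)\,\geq\,\expect_2\bigl[\ind_A\, e^{S_n}\bigr]$ (the missing mass corresponds to trajectories that are $\P_2$-null), and this inequality points in exactly the direction your Jensen step requires; moreover, under $\P_2$ the increments $\log r(Y_j)$ are almost surely bounded, so Wald's identity applies unchanged. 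With that one sentence added, the proof is complete.
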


We start with lower bounds for cones of random variables with bounded variance,
that is $\Yinput_{p,q,K}$ with \mbox{$q \geq 2$}.
We consider the worst case complexity when $\Yinput_{p,q,K}$ is intersected
with a ball of radius~\mbox{$\sigma > 0$}
with respect to the \emph{centered} $L_2$-norm
(i.e.~the standard deviation, which is a semi-norm),
\begin{equation*}
  \sigma\Barrel_2 := \{Y \in L_2 \mid \|Y - \expect Y\|_2 \leq \sigma\} \,.
\end{equation*}
In this case finding worst case lower bounds
reduces to distinguishing two Bernoulli random variables with antithetic
probability of success.

\begin{thm}[Worst case complexity for bounded variance]
  \label{thm:LB-YcapB2}
  Let \mbox{$1 \leq p < q \leq \infty$} with~$q \geq 2$ and 
  let $K>1$ as well as $\sigma>0$. Then
  \begin{equation*}
    \bar{n}^{\wor}(\eps,\delta, \Yinput_{p,q,K} \cap \sigma\Barrel_2)
      \,\geq\, \frac{1}{4 \log 3} \, \left(\frac{\sigma}{\eps}\right)^2
                 \, \log\left(\fracts{4}{3}\delta\right)^{-1}
  \end{equation*}
  for any $0 < \eps \leq \min\{1 - 2/(K+1),\, 1/2\}\, \sigma$,
  and~$0 < \delta \leq 1/4$.
\end{thm}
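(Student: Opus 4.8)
The plan is to reduce the lower bound to a two-point hypothesis testing problem between two Bernoulli random variables with symmetric success probabilities, and then invoke Wald's inequality (Proposition~\ref{prop:Wald-probrat}) to convert a testing lower bound into a lower bound on the expected cardinality of any $(\eps,\delta)$-approximating algorithm. First I would fix a success probability $a\in(0,1/2]$ and consider the two Bernoulli variables $Y^{(1)}$ with $\P\{Y^{(1)}=1\}=\tfrac12+\beta$ and $Y^{(2)}$ with $\P\{Y^{(2)}=1\}=\tfrac12-\beta$ for a suitable small parameter $\beta>0$ (scaled by a factor so that the target values $\expect Y^{(i)}$ differ by more than $2\eps$, which forces an $(\eps,\delta)$-approximating algorithm to act as a statistical test distinguishing the two). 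By Lemma~\ref{lem:Bernoulli,K>>1}, a Bernoulli variable with success probability $1/2$ lies in every cone $\Yinput_{p,q,K}$; a small perturbation of the probability away from $1/2$ only slightly changes the ratio of central norms, so for $\beta$ small enough both $Y^{(1)},Y^{(2)}$ remain in $\Yinput_{p,q,K}$, and by choosing the scaling so that the centered $L_2$-norm equals $\sigma$ (up to the $O(\beta)$ correction), they also lie in $\sigma\Barrel_2$. The constraint $\eps\le\min\{1-2/(K+1),1/2\}\,\sigma$ is exactly what makes room for such a construction: the factor $1-2/(K+1)$ governs how far the success probability may stray from $1/2$ while staying in the cone, and the factor $1/2$ controls the scaling relative to $\sigma$.

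Next I would compute the relevant Kullback--Leibler-type quantity $|\expect\log r(Y^{(i)})|$ appearing in Wald's bound. Since $r(y)$ is the likelihood ratio between two Bernoulli$(\tfrac12\pm\beta)$ laws, a direct computation gives $\expect\log r(Y^{(i)}) = \pm\bigl[(\tfrac12+\beta)\log\tfrac{1/2+\beta}{1/2-\beta} + (\tfrac12-\beta)\log\tfrac{1/2-\beta}{1/2+\beta}\bigr]$ up to signs, which for small $\beta$ behaves like $c\,\beta^2$ for an absolute constant $c$ (concretely one can bound it by $4\beta^2$ or a similar explicit constant using $\log\tfrac{1+x}{1-x}\le \tfrac{2x}{1-x}$ type estimates, valid since $\beta\le 1/2$ comfortably). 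Plugging into Proposition~\ref{prop:Wald-probrat} yields $\bar n(A,Y^{(i)}) \gtrsim (1-2\delta)\log\tfrac{1-\delta}{\delta}\big/(c\beta^2)$. It then remains to re-express $\beta$ in terms of $\eps$ and $\sigma$: since the gap between the two means must exceed $2\eps$ and the scaling normalizes the $L_2$ spread to $\sigma$, one gets $\beta \asymp \eps/\sigma$, so $\beta^{-2}\asymp(\sigma/\eps)^2$, producing the claimed $(\sigma/\eps)^2$ factor. The logarithmic factor $\log(\tfrac43\delta)^{-1}$ and the constant $\tfrac{1}{4\log 3}$ come from bounding $(1-2\delta)\log\tfrac{1-\delta}{\delta}$ from below on $(0,1/4]$ — e.g.\ $1-2\delta\ge 1/2$ and $\tfrac{1-\delta}{\delta}\ge \tfrac34\delta^{-1}$, giving $(1-2\delta)\log\tfrac{1-\delta}{\delta}\ge \tfrac12\log(\tfrac43\delta)^{-1}$ — combined with the explicit constant from the $\beta^2$ estimate; the appearance of $\log 3$ in the denominator suggests using a bound of the form $|\expect\log r|\le 2\beta^2\log 3$ or absorbing constants at an intermediate step near $\beta$ close to $1/2$.

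The main obstacle I expect is the careful bookkeeping of constants: verifying that the perturbed Bernoulli variables genuinely stay inside $\Yinput_{p,q,K}\cap\sigma\Barrel_2$ for the largest possible $\beta$ (hence the smallest possible lower bound denominator), and simultaneously tracking how the scaling factor interacts with the $L_2$-ball constraint — these two requirements pull in opposite directions and the stated hypothesis $\eps\le\min\{1-2/(K+1),1/2\}\sigma$ is presumably the exact threshold where they can both be met. A secondary technical point is that Wald's bound as stated applies to a test $T$, so I would need to explicitly construct from an $(\eps,\delta)$-approximating algorithm $A$ a test $T$ (e.g.\ $T=1$ if $A(Y)$ is closer to $\expect Y^{(1)}$ and $T=2$ otherwise) and check that its error probabilities are at most $\delta$ on each hypothesis, using the fact that the two target means are more than $2\eps$ apart; this is routine but must be done to legitimately invoke the proposition. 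Once these are in place the estimate on $|\expect\log r(Y^{(i)})|$ and the final assembly are elementary.
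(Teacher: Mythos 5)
Your proposal is correct and follows essentially the same route as the paper: a symmetric two-point Bernoulli construction (the paper uses values $\pm\sigma$ with probabilities $(1\pm\alpha)/2$, so your $\beta$ is $\alpha/2$), a sign-based test fed into Wald's bound, the estimate $\log\frac{1+\alpha}{1-\alpha}\le 2(\log 3)\,\alpha$ on $\alpha\in(0,1/2]$, and the bound $(1-2\delta)\log\frac{1-\delta}{\delta}\ge\frac12\log(\frac43\delta)^{-1}$ for $\delta\le 1/4$. The only cosmetic difference is that the paper verifies cone membership explicitly via the triangle inequality, yielding the threshold $|\expect Y^{(i)}|\le(1-2/(K+1))\sigma$ that you correctly anticipated, and the $1/2$ in the hypothesis comes from the range of validity of the convexity bound rather than from the $L_2$-scaling.
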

\begin{proof}
  We consider shifted and scaled Bernoulli random variables~$Y^{(1)}$, $Y^{(2)}$
  that only take values~$\pm \sigma$ with positive probability.
  For $i=1,2$, obviously, $Y^{(i)} \in \sigma\Barrel_2$
  since
  \begin{equation*}
    \|Y^{(i)} - \expect Y^{(i)}\|_2 \leq \|Y^{(i)}\|_2 = \sigma \,.
  \end{equation*}
  Further, by the triangle inequality we have
  \begin{equation*}
    \frac{\|Y^{(i)} - \expect Y^{(i)}\|_q}{\|Y^{(i)} - \expect Y^{(i)}\|_p}
      \,\leq\, \frac{\|Y^{(i)}\|_q + |\expect Y^{(i)}|
               }{\|Y^{(i)}\|_p - |\expect Y^{(i)}|}
      \,=\, \frac{2}{1 - |\expect Y^{(i)}|/\sigma} - 1 \,.
  \end{equation*}
  Hence, $|\expect Y^{(i)}| \leq (1 - 2/(K+1)) \, \sigma$ ensures
  $Y^{(i)} \in \Yinput_{p,q,K}$.
  Introducing a parameter \mbox{$\alpha\in (0,1 - 2/(K+1)]$},
  we further specify $Y^{(1)}$ and $Y^{(2)}$ by
  \begin{align*}
    \P\{Y^{(1)} = +\sigma\} \,=\, \P\{Y^{(2)} = -\sigma\}
      &\,=\, \frac{1 + \alpha}{2} \,,\\
    \P\{Y^{(1)} = -\sigma\} \,=\, \P\{Y^{(2)} = +\sigma\}
      &\,=\, \frac{1 - \alpha}{2} \,.
  \end{align*}
        Hence
        \begin{equation*}
    \expect Y^{(1)} \,=\, -\expect Y^{(2)} \,=\, \alpha \, \sigma \,.
  \end{equation*}
  Now let $\eps\in(0,\alpha \sigma)$ and note that
  from any $(\eps,\delta)$-approximating algorithm~$A$
  on~$\Yinput_{p,q,K} \cap \sigma \Barrel_2$
  we can build a statistical test~$T$
  distinguishing~$Y^{(1)}$ from $Y^{(2)}$ by
  \begin{equation*}
   T(Y^{(i)}) := \begin{cases}
                 1 & A(Y^{(i)})>0 \,,\\
                 2 & A(Y^{(i)})\leq 0\,.
                \end{cases}
  \end{equation*}
  Obviously, the expected cardinality of $A$ and $T$ is the same
  and the failure probability does not exceed~$\delta < \frac{1}{2}$.
  Then, from Proposition~\ref{prop:Wald-probrat} we obtain
  \begin{align*}
    \bar{n}(A,Y^{(i)})
      &\,\geq\, \frac{(1-2\delta) \log \frac{1-\delta}{\delta}
        }{\alpha \log \frac{1+\alpha}{1-\alpha}} \,.
  \end{align*}
  Restricting to~$0 < \alpha \leq 1/2$ we may exploit
  convexity of~$\log \frac{1 + \alpha}{1-\alpha}$,
  replacing it by $2 (\log 3) \, \alpha$.
  We also restrict to $0 < \delta \leq 1/4$ and obtain
  \begin{align*}
    \bar{n}(A,Y^{(i)})
      &\,\geq\, \frac{1}{4 \, \log 3} \,
          \, \alpha^{-2}
          \, \log \left(\fracts{4}{3}\delta\right)^{-1} \,.
  \end{align*}
  The limit~$\alpha \to \eps/\sigma$ and continuity in $\eps$
  gives the desired complexity bound
  for the range of error thresholds
  \mbox{$0 < \eps \leq \min\{1-2/(K+1),\, 1/2\} \, \sigma$}.
\end{proof}

For classes~$\Yinput_{p,q,K}$ with less integrability, $1 \leq p < q \leq 2$,
we intersect the cone with
a ball of radius $\tau > 0$ with respect to the centered $L_q$-norm,
\begin{equation*}
  \tau \Barrel_q := \{Y \in L_q \mid \|Y - \expect Y\|_q \leq \tau \} \,,
\end{equation*}
and we obtain the following worst case lower bound.

\begin{thm}[Worst case complexity for bounded central $L_q$-norm,
    $1 < q \leq 2$]
   %at bounded scale, low integrability] 
  \label{thm:LB-YcapBq}
  Let \mbox{$1 \leq p < q \leq 2$} and $K>1$ as well as $\tau>0$.
  Then there is a constant~$c_{q,K} > 0$, depending only on $K$ and $q$,
  such that
  \begin{equation*}
  \bar{n}^{\wor}(\eps,\delta, \Yinput_{p,q,K} \cap \tau\Barrel_q)
  \,\geq\, c_{q,K} \, \left( \frac{\tau}{\eps}\right)^{1 + 1/(q-1)}
      \, \log \left(\fracts{4}{3} \delta\right)^{-1}
  \end{equation*}
  for any $0 < \eps \leq \fracts{1}{6} \, (1-1/K) \, \tau$
  and $0 < \delta \leq 1/4$.
\end{thm}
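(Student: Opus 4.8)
The plan is to follow the architecture of the proof of Theorem~\ref{thm:LB-YcapB2}: exhibit a pair of hard instances $Y^{(1)},Y^{(2)}\in\Yinput_{p,q,K}\cap\tau\Barrel_q$ whose expectations differ by more than $2\eps$, turn an $(\eps,\delta)$-approximating algorithm into a binary hypothesis test between them, and apply Wald's lower bound (Proposition~\ref{prop:Wald-probrat}). The new difficulty for $q<2$ is that the ``fair coin with small bias'' instances that suffice for $q=2$ only produce the weaker exponent $2$, while a scaled Bernoulli in $\Yinput_{p,q,K}$ cannot be made arbitrarily spiky --- its success probability is bounded below in terms of $K$ alone, cf.\ Lemma~\ref{lem:Bernoulli,K>>1}. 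To reach the exponent $1+1/(q-1)=q/(q-1)$ I would therefore use a \emph{bulk plus rare spike} construction: a sizeable probability mass at a moderate value that pins down the cone ratio, perturbed by a rare large value that carries the mean difference.

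Concretely, fix a bulk scale $c=c(K,q)\,\tau\in(0,\tau)$ together with a spike probability $a>0$ and a large location $M>0$ (to be chosen). Let $B$ equal $\pm c$ with probability $\tfrac12$ each and, independently, let $S^{(i)}$ equal $M$ with probability $a_i$ and $0$ otherwise, where $a_1=2a$ and $a_2=a$; set $Y^{(i)}:=B+S^{(i)}$. Then $\expect Y^{(i)}=a_iM$, so the mean gap is $(a_1-a_2)M=aM$, which I would force to exceed $2\eps$. Since $B$ has centered $L_p$- and $L_q$-norms both equal to $c$ (cone ratio $1<K$) and centered norm $c\le\tau$, a reverse triangle estimate separating $B$ from $S^{(i)}$ gives
\begin{equation*}
  \|Y^{(i)}-\expect Y^{(i)}\|_q\le c+\|S^{(i)}-\expect S^{(i)}\|_q
  \qquad\text{and}\qquad
  \|Y^{(i)}-\expect Y^{(i)}\|_p\ge c-\|S^{(i)}-\expect S^{(i)}\|_q\,,
\end{equation*}
where $\|S^{(i)}-\expect S^{(i)}\|_q\asymp a_i^{1/q}M$ and, since $p\le q$, also $\|S^{(i)}-\expect S^{(i)}\|_p\le\|S^{(i)}-\expect S^{(i)}\|_q$. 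Hence, as soon as the spike's $L_q$-size $a_i^{1/q}M$ is a sufficiently small fraction of $c\asymp\tau$ --- quantitatively of order $\frac{K-1}{K+1}\,c$ --- we get both $Y^{(i)}\in\tau\Barrel_q$ and $Y^{(i)}\in\Yinput_{p,q,K}$. Under this restriction the mean gap $aM$ can be made as large as $\asymp_{K,q}\tau\,a^{(q-1)/q}$, so the requirement $aM>2\eps$ is met by taking $a\asymp_{K,q}(\eps/\tau)^{q/(q-1)}$; the standing hypothesis $\eps\le\tfrac16(1-1/K)\,\tau$ is exactly what additionally guarantees $a_1=2a\le\tfrac12$. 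Choosing $c(K,q)$ and $a$ so that all of these conditions hold simultaneously for every $K>1$ and every $q\in(1,2]$ is the technical heart of the argument, but reduces to elementary inequalities.

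With the instances in hand I would finish as in Theorem~\ref{thm:LB-YcapB2}. From an $(\eps,\delta)$-approximating $A$ build the test $T$ that reports $1$ or $2$ according to whether $A(Y^{(i)})$ is closer to $\expect Y^{(1)}$ or to $\expect Y^{(2)}$; since the means differ by more than $2\eps$, $T$ errs with probability at most $\delta<\tfrac12$ and has the same expected cardinality as $A$. The probability ratio $r$ equals $\tfrac{1-2a}{1-a}$ on the bulk values and $2$ on the spike values, so
\begin{equation*}
  |\expect\log r(Y^{(2)})|=(1-a)\log\tfrac{1-a}{1-2a}-a\log 2\;\le\;\tfrac32\,a
\end{equation*}
for $a\le\tfrac14$ (using $-\log(1-x)\le x/(1-x)$), and likewise $|\expect\log r(Y^{(1)})|\le 2a$. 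Proposition~\ref{prop:Wald-probrat}, together with $(1-2\delta)\log\tfrac{1-\delta}{\delta}\ge\tfrac12\log(\tfrac43\delta)^{-1}$ for $0<\delta\le\tfrac14$ (already used in the proof of Theorem~\ref{thm:LB-YcapB2}), then yields
\begin{equation*}
  \bar n(A,Y^{(2)})\;\ge\;\frac{(1-2\delta)\log\frac{1-\delta}{\delta}}{\tfrac32\,a}\;\ge\;\frac{1}{3a}\,\log(\tfrac43\delta)^{-1}\;\asymp_{K,q}\;\Big(\frac{\tau}{\eps}\Big)^{1+1/(q-1)}\,\log(\tfrac43\delta)^{-1}\,,
\end{equation*}
which is the claimed bound, the constant $c_{q,K}$ collecting the factors from the choice of $c$ and $a$. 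I expect the only real obstacle to be this bookkeeping --- making the cone membership, the ball membership, and the bound $a\le\tfrac14$ compatible uniformly in $K$ and $q$ under the hypothesis $\eps\le\tfrac16(1-1/K)\tau$; the rest mirrors the $q=2$ proof.
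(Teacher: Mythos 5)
Your proposal is correct and follows essentially the same route as the paper: both construct a pair of ``bulk plus rare spike'' discrete distributions in $\Yinput_{p,q,K}\cap\tau\Barrel_q$ whose means differ by more than $2\eps$ only through a spike of probability $\asymp_{q,K}(\eps/\tau)^{q/(q-1)}$ calibrated against the $L_q$-constraint, verify cone and ball membership via the triangle inequality, and invoke Proposition~\ref{prop:Wald-probrat} to get cost bounded below by the reciprocal of the spike probability. The paper's instances differ only cosmetically (symmetric spike locations $\pm\gamma^{-1/q}\tau'$ with probabilities split $3{:}1$, plus an atom at zero normalizing the $L_q$-norm), and the bookkeeping you defer is exactly the kind of constant-tracking the paper also carries out.
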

\begin{proof}
  We consider random variables~$Y^{(1)}$, $Y^{(2)}$ with   
  \begin{align*}
    \P\{Y^{(i)} = \tau'\}
      \,=\, \P\{Y^{(i)} = -\tau'\}
      &\,=\, \frac{1 - \beta}{2} \,, \\
    \P\{Y^{(i)} = 0\} &\,=\, \beta  \, (1 - \gamma) \,, \\
%     \quad \text{and}\\
    \P\{|Y^{(i)}| = \gamma^{-1/q} \, \tau'\} &\,=\, \beta \, \gamma \,,
  \end{align*}
  for $i=1,2$,
  where~$0 < \tau' < \tau$ and $0 < \beta,\gamma < 1$.
  Now we choose $\beta$ and $\tau$
  such that $Y^{(i)}\in \Yinput_{p,q,K}\cap\tau\Barrel_q$.
  Observe that
  \begin{gather*}
   |\expect Y^{(i)}| \leq \beta \, \gamma^{1-1/q} \, \tau'
    \leq \beta \, \tau',\\
   (1-\beta) \, \tau' 
      \leq (1-\beta)^{1/p} \, \tau' \leq \|Y^{(i)}\|_p
      \leq \|Y^{(i)}\|_q = \tau'.
  \end{gather*}
  Using the triangle inequality,
  for $0 < \beta < 1/2$ we obtain 
  \begin{equation*}
    \frac{\|Y^{(i)} - \expect Y^{(i)}\|_q}{\|Y^{(i)} - \expect Y^{(i)}\|_p}
      \,\leq\, \frac{\|Y^{(i)}\|_q + |\expect Y^{(i)}|
            }{\|Y^{(i)}\|_p - |\expect Y^{(i)}|}
      \,\leq\, \frac{1 + \beta}{1 - 2\beta},
  \end{equation*}
  such that if we put
  \begin{equation*}
    \beta := \frac{1}{2} \, \left(1 - \frac{3}{1 + 2K} \right) \,,
  \end{equation*}
  this ensures~$Y^{(i)} \in \Yinput_{p,q,K}$.
  Furthermore,
  \begin{equation*}
    \|Y^{(i)} - \expect Y^{(i)}\|_q
      \,\leq\, \|Y^{(i)}\|_q + |\expect Y^{(i)}|
      \,\leq\, (1 + \beta) \, \tau' \,,
  \end{equation*}
  so choosing $\tau' := \tau/(1+\beta)$
  guarantees $Y^{(i)} \in \tau\Barrel_q$.
  We %further
  specify $Y^{(1)}$ and $Y^{(2)}$ by
  \begin{align*}
    \P\{Y^{(1)} = \gamma^{-1/q} \tau'\}
      \,=\, \P\{Y^{(2)} = - \gamma^{-1/q} \tau'\}
      \,&=\, \frac{3}{4} \, \beta \, \gamma \,,\\
    \P\{Y^{(1)} = - \gamma^{-1/q} \tau'\}
      \,=\, \P\{Y^{(2)} = \gamma^{-1/q} \tau'\}
      \,&=\, \frac{1}{4} \, \beta \, \gamma \,.
  \end{align*}
%  Hence,
  \checked{Their respective expected values are then given by}
  %Robert: Das war die Aufgabe ``additional wording''
  \begin{equation*}
    \expect Y^{(1)}
      \,=\, - \expect Y^{(2)}
      \,=\,\frac{1}{2}\,\beta\, \gamma^{1-1/q} \, \tau' \,.
  \end{equation*}
  In order to guarantee that
  $\frac{1}{2} \beta \, \gamma^{1-1/q} \, \tau' > \eps$,
  we choose $\gamma>\left(2\eps/(\beta \tau')\right)^{1+1/(q-1)}$.
  Since we need \mbox{$\gamma<1$},
  this is only possible for $\eps\in(0,\frac{1}{2}\beta\tau')$.
  As in the proof of Theorem~\ref{thm:LB-YcapB2},
  cost bounds for $(\eps,\delta)$-approximating algorithms~$A$
  on $\Yinput_{p,q,K}\cap \tau\Barrel_q$
  are derived from the cost bounds of Proposition~\ref{prop:Wald-probrat}
  for distinguishing~$Y^{(1)}$ from~$Y^{(2)}$,
  \begin{equation*}
    \bar{n}(A,Y^{(1)})
      \,\geq\, \frac{2(1-2\delta) \log \frac{1-\delta}{\delta}
                }{\beta\, \gamma\, 
                \log3 } \,.
  \end{equation*}
  By letting
  $\gamma\to \left(2\eps(1+\beta)/(\beta \tau)\right)^{1+1/(q-1)}$,
  we obtain
  \begin{equation*}
    \bar{n}^{\wor}(\eps,\delta,\Yinput_{p,q,K} \cap \tau\Barrel_q)
      \,\geq\, 
        c_{q,K} \, \left(\frac{\tau}{\eps}\right)^{1+1/(q-1)}
                \, \log \left(\fracts{4}{3} \delta\right)^{-1}
  \end{equation*}
  with $%\displaystyle
        c_{q,K}:=\frac{1}{\beta \, \log 3}
          \, \left(\frac{\beta}{2(1+\beta)}\right)^{1+1/(q-1)}$,
  and the assertion is proven.
\end{proof}

\checked{
\begin{rem}[Dependence on $K$] \label{rem:worLB_K}
  The value of the lower bound in Theorem~\ref{thm:LB-YcapB2} does not depend on~$K$,
  and in Theorem~\ref{thm:LB-YcapBq} the prefactor~$c_{q,K}$
  approaches a \mbox{$q$-dependent} constant as~$K$ grows to infinity.
  In contrast, our upper bounds in Theorem~\ref{thm:costbound}
  do have a $K$-dependence,
  but as pointed out in Remark~\ref{rem:balance},
  in cases of cones~$\Yinput_{p,q,K}$ with $q > 2$
  we can directly estimate the variance (which is the moment of interest)
  and no $K$-dependence occurs in the scale dependent part of the cost bound,
  that is, the part which depends on the norm of the input and the accuracy~$\eps$.
  In that respect, the lower bounds seem to be optimal.
  
  As $K > 1$ approaches~$1$, though,
  the lower bounds only hold for small $\eps > 0$.
  This is not reflected in the upper bound of Theorem~\ref{thm:costbound},
  likely because when constructing our algorithm we ignored the fact
  that the set~\mbox{$\Yinput_{p,q,K} \cap \tau\Barrel_{\tilde{q}}$}
  might be much smaller than the set~\mbox{$\tau\Barrel_{\tilde{q}}$},
  for~$\tau > 0$ and $\tilde{q} := \min\{q,2\}$.
  Indeed, for small~$K$
  the distributions of random variables contained in~$\Yinput_{p,q,K}$
  tend to be bimodal, 
  so observing samples from both modes
  could already enable for a rough approximation of the expectation
  by returning a value right between the two modes.
  To get a better intuition we consider the degenerate case~$K = 1$,
  where $\Yinput_{p,q,K}$, independently from $p$ and $q$,
  contains only random variables that either take one constant value,
  or two distinct values with probability~$1/2$ each.
  Then, for any~$\delta \in (0,1)$ there is an i.i.d.-based algorithm
  that provides an exact solution with probability at least~\mbox{$(1-\delta)$}.
  In detail, put~\mbox{$n := \lceil \log_2 \delta^{-1} \rceil + 1$}
  and define
  \begin{equation*}
  A(Y) := \fracts{1}{2} \left(\min\{Y_1,\ldots,Y_n\}
  + \max\{Y_1,\ldots,Y_n\}
  \right) \,.
  \end{equation*}
  If~$Y$ has two distinct values,
  it is with probability~$2^{-(n-1)}$ that we only observe one of these.
  If, however, we observe both values, the output will perfectly match
  the expected value.
  Note that in this setting
  the fixed cost lower bound of Theorem~\ref{thm:fixedcost} still applies.
  
  The prefactor~$c_{q,K}$ in Theorem~\ref{thm:LB-YcapBq}
  tends to zero for small~$K$.
  We suspect that this is a shortcoming of the lower bound proof.
\end{rem}
}

\subsection{Extension to general integration methods}
\label{sec:INTLBs}

Here we consider the integration problem defined in~\eqref{eq:INT}.
Intuitively, as long as the measure $\pi$ has no atoms on $G$,
it seems reasonable to restrict to i.i.d.-based algorithms.
(If, however, $\pi$ has atoms, then there is a non-zero probability 
that we compute the same function value twice when using i.i.d.\ samples.)
We are going to show that the previously obtained lower bounds
for i.i.d.-based algorithms also hold for general randomized algorithms.
The key observation is that $L_p$-norms do not reflect any kind of
local structure of functions.

We impose the following weak standing assumption concerning the 
existence of suitable partitions, which for $G\subseteq \R^d$,
equipped with the Borel $\sigma$-algebra, is always satisfied whenever $\pi$
has no atoms.
\begin{ass}
  For every $N\in \N$ we assume that there are 
  pairwise disjoint
  measurable sets $G_{1,N},\ldots,G_{N,N} \subset G$
%  $G_{1,N},\ldots,G_{N,N}\in \Sigma_G$
  with $\bigcup_{j=1}^N G_{j,N}=G$ 
%  satisfying
  possessing equal measure
  \mbox{$\pi(G_{j,N})=1/N$} for all \mbox{$j=1,\ldots,N$}.  
\end{ass}

While lower bounds for i.i.d.~based algorithms
were obtained by lower bounds for distinguishing
random variables~$Y^{(1)}$ and $Y^{(2)}$ with expected values
at a distance exceeding $2 \eps$,
now the idea is to distinguish between two collections~$\Finput^{(1)}$
and $\Finput^{(2)}$ of functions
with distinct integral values,
\begin{equation*}
  \Int(f)
    = \begin{cases}
        a_1 &\quad\text{if $f \in \Finput^{(1)}$,}\\
        a_2 &\quad\text{if $f \in \Finput^{(2)}$,}
      \end{cases}
\end{equation*}
at distance $|a_2 - a_1| > 2 \eps$.
Such collections are constructed as follows.

We consider only functions that take at most
finitely many
distinct values,
each with a certain probability.
For this let $D\subset \R$ with $\# D < \infty$ %$\# D = k \in \N$
and
\begin{equation*}
  p^{(i)}_z \in [0,1]\cap\mathbb{Q}\,, 
  \qquad \text{for $z\in D$ and $i=1,2$,}
\end{equation*}
satisfying 
\begin{equation*}
\sum_{z\in D} p^{(i)}_z = 1\,, \qquad \text{for $i=1,2$}.
\end{equation*}
Consider the (unbounded) set of all common denominators of these probabilities,
namely,
\begin{equation*}
  \D_{\vecp}
    := \{N \in \N \mid N \cdot p_{z}^{(i)} \in \N_0
    \text{\; for all 
%     $\ell = 1,\ldots,k$ 
    $z\in D$
    and $i=1,2$}\} \,.
\end{equation*}
Then, for $N \in \D_{\vecp}$ and $i=1,2$, the sets
\begin{align*}
  \Finput_N^{(i)}
    := \bigl\{f: G \to \R \mid&\text{$f$ is constant on each $G_{j,N}$,
           where $j=1,\ldots,N$;}\\
        &\text{and $\pi\{\vecx \colon f(\vecx) = z\}
        = p_z^{(i)}$
%     for $\ell = 1,\ldots,k$
    for $z\in D$
    }
        \bigr\} 
\end{align*}
are non-empty.
In detail, for $f \in \Finput_N^{(i)}$,
the number of subdomains~$G_{j,N}$ with $f(G_{j,N}) = z$
is exactly \mbox{$N \cdot p_{z}^{(i)} \in \N_0$}.
Furthermore, for any function~$f \in \Finput_N^{(i)}$
and any permutation~$\Pi$ on~$\{1,\ldots,N\}$,
the function~$f_{\Pi}$ with $f_{\Pi}(G_{j,N}) := f(G_{\Pi(j),N})$
is also contained in~$\Finput_N^{(i)}$.
Now, for $i=1,2$ we define
\begin{equation*}
  \Finput^{(i)} := \bigcup_{N \in \D_{\vecp}} \Finput_N^{(i)}. 
\end{equation*}
This leads to the integral value
\begin{equation*}
\Int (f) = \sum_{z \in D} p_z^{(i)} z =: a_i
\end{equation*}
for $f \in \Finput^{(i)}$. 

\begin{prop} \label{prop:iid=general}
  Let $\vecX$ be a $\pi$-distributed random variable 
  and let $\Finput^{(i)}$ with $i=1,2$ be defined as above.
  Then
  \begin{equation*}
    \inf_Q \max_{i=1,2} \sup_{f \in \Finput^{(i)}} \P\{Q(f) \not= a_i\} \\
      \,=\,
  \inf_A \max_{i=1,2}\sup_{f \in \Finput^{(i)}} \P\{A(f(\vecX)) \not= a_i\} \,,
  \end{equation*}
  where the infimum on the left-hand side 
  runs over all general randomized algorithms~$Q$ with
  \begin{equation} \label{eq:n_i-constraint}
    \sup_{f \in \Finput^{(i)}} \bar{n}(Q,f) \leq \bar{n}_i
    \qquad \text{for $i=1,2$\,,}
  \end{equation}
  and on the right-hand side 
  over all i.i.d.-based algorithms~$A$ with the same cost constraint.
  In particular, this implies that 
  no general randomized algorithm is better than
   the best possible i.i.d.-based algorithm
   in distinguishing $\Finput^{(1)}$ from $\Finput^{(2)}$.
\end{prop}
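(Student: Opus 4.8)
The plan is to establish the two inequalities separately. The inequality ``$\geq$'' is the routine direction: as explained after Definition~\ref{def:alg}, any i.i.d.-based algorithm~$A$ induces a general randomized algorithm $Q_A(\omega,f):=A(\omega,f(\vecX))$. What makes this transparent is that every $f\in\Finput^{(i)}$ satisfies $\pi\{\vecx:f(\vecx)=z\}=p^{(i)}_z$, so $f(\vecX)$ always has the \emph{same} law, namely that of a random variable $\xi^{(i)}$ with $\P\{\xi^{(i)}=z\}=p^{(i)}_z$ for $z\in D$. Hence $\bar{n}(Q_A,f)=\bar{n}(A,\xi^{(i)})$ and $\P\{Q_A(f)\neq a_i\}=\P\{A(\xi^{(i)})\neq a_i\}$ for all $f\in\Finput^{(i)}$, the cost constraint~\eqref{eq:n_i-constraint} transfers verbatim, and the left-hand infimum is at most the right-hand one. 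By the same observation the right-hand side equals
\begin{equation*}
  v(\bar{n}_1,\bar{n}_2)\,:=\,\inf_A\,\max_{i=1,2}\P\{A(\xi^{(i)})\neq a_i\},
\end{equation*}
the infimum being over i.i.d.-based algorithms with $\bar{n}(A,\xi^{(i)})\leq\bar{n}_i$ for $i=1,2$; this is the optimal worst-case error of a sequential test between the two laws $(p^{(1)}_z)_{z\in D}$ and $(p^{(2)}_z)_{z\in D}$, and it is the quantity we must lower-bound against arbitrary general algorithms.

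For the reverse inequality, fix an admissible general randomized algorithm~$Q$ and a resolution $N\in\D_{\vecp}$. Since $\Finput^{(i)}_N$ is invariant under the relabellings $f\mapsto f_\Pi$ of the subdomains $G_{1,N},\dots,G_{N,N}$, averaging over a uniformly random permutation $\Pi$ and writing $F^{(i)}:=f^{(i)}_\Pi$ for a fixed representative $f^{(i)}\in\Finput^{(i)}_N$ gives
\begin{equation*}
  \sup_{f\in\Finput^{(i)}}\P\{Q(f)\neq a_i\}
    \,\geq\,\sup_{f\in\Finput^{(i)}_N}\P\{Q(f)\neq a_i\}
    \,\geq\,\P\{Q(F^{(i)})\neq a_i\},
\end{equation*}
and $\expect\bar{n}(Q,F^{(i)})\leq\bar{n}_i$. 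The idea is to reproduce the experiment ``run $Q$ on $F^{(i)}$'' by an i.i.d.-based algorithm~$A_N$ fed with i.i.d.\ copies of $\xi^{(i)}$: $A_N$ simulates $Q$ while revealing $F^{(i)}$ lazily, keeping a multiset of still-unassigned subdomain values (initialised with $Np^{(i)}_z$ copies of each $z\in D$); when $Q$ probes a point in a subdomain not yet touched, $A_N$ draws from its input stream and uses rejection against the current multiset to fix the value of that subdomain (accept the first drawn value that still has a free slot), records it and passes it to $Q$; when $Q$ re-probes a subdomain, the stored value is returned without consuming a sample. By construction the function presented to $Q$ is uniform on $\Finput^{(i)}_N$, so $A_N(\xi^{(i)})$ has the same distribution as $Q(F^{(i)})$, and in particular the same error probability.

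It then remains to control the cost. The number of samples $A_N$ consumes equals the number of distinct subdomains $Q$ probes plus the number of rejected draws; the former has expectation at most $\bar{n}_i$ (being dominated by the total probe count of $Q$ on $F^{(i)}\in\Finput^{(i)}$), and no rejection can occur before more than $\min_{z:p^{(i)}_z>0}Np^{(i)}_z$ subdomains have been revealed, so the expected rejection overhead is some $\epsilon_N\to0$ as $N\to\infty$; here one uses $\expect n<\infty$ for the probe count~$n$ together with elementary estimates such as $g\,\P\{n>g\}\leq\expect[n\,\ind\{n>g\}]\to 0$, so that the rare event of many probes is negligible. Hence $\bar{n}(A_N,\xi^{(i)})\leq\bar{n}_i+\epsilon_N$ while $\max_i\P\{A_N(\xi^{(i)})\neq a_i\}\leq\max_i\sup_{f\in\Finput^{(i)}}\P\{Q(f)\neq a_i\}$, i.e.\ $v(\bar{n}_1+\epsilon_N,\bar{n}_2+\epsilon_N)\leq\max_i\sup_f\P\{Q(f)\neq a_i\}$ for every~$N$. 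Letting $N\to\infty$ and using that $v$ is non-increasing and convex in $(\bar{n}_1,\bar{n}_2)$ — convexity from randomising between two algorithms, which convex-combines costs and errors — hence continuous at the interior point $(\bar{n}_1,\bar{n}_2)$ (assuming $\bar{n}_i>0$; otherwise both sides are readily seen to coincide), we get $v(\bar{n}_1,\bar{n}_2)\leq\max_i\sup_f\P\{Q(f)\neq a_i\}$. Taking the infimum over all admissible~$Q$ yields the reverse inequality, and together with the first part the equality follows.

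I expect the genuine obstacle to be precisely the passage $N\to\infty$: for a \emph{fixed} resolution~$N$ a general randomized algorithm really can be slightly better than i.i.d.\ sampling, since it can probe distinct subdomains and thereby sample the underlying value distribution ``without replacement'', and the equivalence becomes exact only as the resolution refines. Making this rigorous requires the two quantitative ingredients above — a rejection overhead $\epsilon_N$ that vanishes with $N$, and continuity of the optimal-test value $v$ in the cost budget — each easy in isolation but jointly carrying the argument. (An alternative to the rejection-based simulation is to feed $Q$ plain i.i.d.\ draws for new subdomains and cap the simulation after $g(N)$ probes with $1\ll g(N)\ll\sqrt{N}$; the birthday-type total-variation bound of order $g(N)^2/N$ between sampling with and without replacement then plays the role of~$\epsilon_N$.)
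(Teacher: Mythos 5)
Your overall architecture is the right one and close to the paper's: replace the sup over $\Finput^{(i)}$ by the average over the uniformly permuted function $F^{(i)}=f^{(i)}_\Pi$, turn a general $Q$ into an i.i.d.-based algorithm by comparing without-replacement sampling of subdomain values with i.i.d.\ sampling, let the discrepancy vanish as $N\to\infty$, and repair the slightly violated budget at the end (the paper does this with a small ``return garbage with probability $\zeta$'' coin rather than convexity/continuity of $v$, but both work). The easy direction is also handled correctly. The gap is in the central step: the rejection rule \emph{``accept the first drawn value that still has a free slot''} does \emph{not} reproduce the uniform-permutation (hypergeometric) law. Conditional on the multiset $m$ of already revealed values, the correct law of the next revealed value is $z$ with probability $(Np^{(i)}_z-m_z)/(N-|m|)$, whereas your rule gives probability proportional to $p^{(i)}_z\,\ind[Np^{(i)}_z>m_z]$; e.g.\ with $D=\{0,1\}$, $p_0=p_1=1/2$, $N=4$ and one subdomain already revealed with value $0$, the correct conditional is $(1/3,2/3)$ but your rule gives $(1/2,1/2)$. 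Worse, in the only regime your cost analysis uses (fewer reveals than $\min_z Np^{(i)}_z$, so no value class is exhausted) the rule performs \emph{no} rejections at all, i.e.\ it silently degenerates into feeding $Q$ plain i.i.d.\ draws. Hence the claimed identity in distribution between $A_N(\xi^{(i)})$ and $Q(F^{(i)})$ is false, and an $O(M^2/N)$-type total-variation error term is missing from the error-probability bookkeeping (it also infects the cost bound, since the probe count is then that of $Q$ run on an i.i.d.-valued input rather than on $F^{(i)}\in\Finput^{(i)}$, for which the constraint \eqref{eq:n_i-constraint} was assumed).

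The fix is either to weight the acceptance correctly -- accept a drawn value $z$ with probability $(Np^{(i)}_z-m_z)/(Np^{(i)}_z)$, suitably normalized, which does yield the without-replacement conditional and keeps the expected rejection overhead $o(1)$ -- or to adopt your own parenthetical alternative: truncate at $M=M(N)$ probes (Markov gives an extra uncertainty $\bar n_i/M$), feed $Q$ plain i.i.d.\ values, and bound the discrepancy between the two laws of the revealed $M$-tuple, e.g.\ via
\begin{equation*}
  \prod_{j=1}^{M}\frac{Np^{(i)}_{z_j}-M}{N}
  \;\leq\;\mu_N^{(i)}\bigl\{f(\vecx_1)=z_1,\ldots,f(\vecx_M)=z_M\bigr\}
  \;\leq\;\prod_{j=1}^{M}\frac{Np^{(i)}_{z_j}}{N-M}\,,
\end{equation*}
then let $N\to\infty$ for fixed $M$ and finally $M\to\infty$. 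That corrected version is essentially the paper's proof, and with it your continuity-of-$v$ device is a legitimate substitute for the paper's randomized budget repair.
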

\begin{proof}
  For $N \in \D_{\vecp}$ let~$\mu_N^{(i)}$ be the uniform distribution
  on~$\Finput_N^{(i)}$, for $i=1,2$.
  Then, for any general randomized algorithm~$Q$
  defined on $\Finput^{(1)} \cup \Finput^{(2)}$
  we have
  \begin{equation} \label{eq:n_i,mu}
    \bar{n}_i
      \geq \sup_{f \in \Finput^{(i)}} \bar{n}(Q,f)
      \geq \int_{\Finput_N^{(i)}} \bar{n}(Q,f) \dint \mu_N^{(i)}(f) \,,
  \end{equation}
  and
  \begin{align}
    \sup_{f \in \Finput^{(i)}} \P\{Q(f) \not= a_i\}
      &\geq \int_{\Finput_N^{(i)}} \P\{Q(f) \not= a_i\}
              \dint\mu_N^{(i)}(f)
        \nonumber\\
      &\stackrel{\text{Fubini}}{=}
        (\P\otimes\mu_N^{(i)})\{(\omega,f) \colon Q(\omega,f) \not= a_i\} \,.
      \label{eq:murisk}
  \end{align}
  
  We now consider general randomized algorithms~$Q$
  that minimize the maximal average uncertainty
  \begin{equation} \label{eq:crit-maxavg}
    \max_{i=1,2}\,
      (\P\otimes\mu_N^{(i)})\{(\omega,f) \colon Q(\omega,f) \not= a_i\}
  \end{equation}
  while satisfying the constraint~\eqref{eq:n_i-constraint}.
  (Replacing a randomized worst case setting by an average case setting
    for lower bound proofs
    is a trick that has already been used by Bakhvalov 1959~\cite{Bakh59}.)
%   Optimal methods with respect to this average criterion compute
%   no more than one function value for each of the~$G_{j,N}$.
%   Without loss of generality, due to the symmetry of~$\mu_N^{(i)}$,
%   we say that those methods compute function values~$f(\vecx_j)$
%   with \mbox{$\vecx_j \in G_{j,N}$},
%   where~$j=1,\ldots,N$.
\checked{When using $n$ function evaluations,
    the function is known on at most $n$~of the subdomains of $G_{1,N},\ldots,G_{N,N}$.
    Without loss of generality,
    we may assume that the algorithm indeed gains knowledge on $n$~different subdomains.
    (If not, then a modified method with improved information mapping
    that avoids computing two function values on the same subdomain
    while preserving the knowledge of the original information map,
    can still return the same output or even improve the algorithm.)
    Due to symmetry of the probability measure $\mu_N^{(i)}$,
    the \checked{conditional} distribution of function values
    on the remaining $N-n$ subdomains
    is independent of the choice of the $n$ subdomains where 
    %we computed function values.
    \checked{the function has been evaluated.}
    Hence, for simplicity, we assume that the algorithm uses one function value
    within each of the first $n$ subdomains $G_{1,N},\ldots,G_{n,N}$.}
  According to Definition~\ref{def:alg},
  such a method can be described via a random variable~$U$,
  a stopping rule~\mbox{$n(u,y_1,y_2,\ldots)$},
  and an output function~$\phi(u,y_1,y_2,\ldots)$,
  such that
  \begin{equation*}
    Q(\omega,f)
      = \phi(U(\omega),f(\vecx_1),\ldots,f(\vecx_{n(\omega,f)}),0,0,\ldots)
    \,,
  \end{equation*}
  with stopping time given by
  $n(\omega,f) := n(U(\omega),f(\vecx_1),\ldots,f(\vecx_N),0,0,\ldots)$,
  \checked{and $\vecx_j \in G_{j,N}$ for $j=1,\ldots,N$.}
  In other words, the infimum over all general randomized algorithms~$Q$
  for the quantity~$\eqref{eq:crit-maxavg}$
  can be reduced to an infimum over
  stopping rules $n(u,y_1,y_2,\ldots)$ and
  output functions~$\phi$
  while the information of function values is predetermined.
  
  These algorithmic components $n$ and
  $\phi$ can also be used with an i.i.d.\ sample
  $(f(\vecX_j))_{j\in\N}$,
  where \mbox{$(\vecX_j)_{j\in \N}$} is a sequence of independent,
  $\pi$-distributed random variables that are independent of~$U$.
  By this we obtain an i.i.d.-based algorithm %~$A$ with
  \begin{equation*}
    A(\omega,f(\vecX))
      := \phi(U,f(\vecX_1(\omega)),\ldots,f(\vecX_{n(\omega,f(\vecX))}(\omega)),0,0,\ldots),
  \end{equation*}
  where
  $n(\omega,f(\vecX))
    := n(U(\omega),f(\vecX_1(\omega)),f(\vecX_2(\omega)),\ldots)$.
  For any such~$A$ we have
  \begin{equation*}
    (\P\otimes\mu_N^{(i)})\{(\omega,f): A(\omega,f(\vecX)) \not= a_i\}
      = \sup_{f \in \Finput^{(i)}} \P\{A(f(\vecX)) \not= a_i\}\,,
  \end{equation*}
  since $\P\{A(f(\vecX))\not=a_i\}= \P\{A(g(\vecX))\not=a_i\}$
  for all \checked{representing functions} \mbox{$f,g\in \Finput^{(i)}$}
  \checked{of the same subclass}.
  
  For the following considerations we restrict to methods
  with some absolute cardinality limit~$n(\checked{\omega},f)\leq M \leq N$.
  If a general randomized algorithm~$Q$ occasionally uses more than~$M$ samples,
  by Markov's inequality,
  \checked{in the $\mu_N^{(i)}$-average setting}
  this happens only with \checked{average} probability
  \begin{equation} \label{eq:Mrisk}
    (\P\otimes\mu_N^{(i)})\{(\omega,f) \mid n(\omega,f) > M\}
      \leq \frac{\bar{n}_i}{M} \,.
%    \qquad \text{for $f \in \Finput^{(i)}$.}
%% Robert: Das hat keinen Sinn gemacht.
  \end{equation}
  We can define another general randomized algorithm~$Q'$ 
  based on~$Q$ by simply stopping~$Q$
  whenever it would compute yet another function value
  after already knowing~$M$ function values,
  and returning an arbitrary output then.
  The uncertainty %only
  increases
  \checked{at most}
  by~$\bar{n}_i/M$
  (and vanishes in the limit $M \to \infty$ which we perform later on),
  whereas the cost is even reduced.
  
  Now we compare the probabilities at which function values are observed
  by a general randomized algorithm~$Q_N$ in the $\P\otimes\mu_N^{(i)}$ setting
  and by an i.i.d.-based algorithm~$A$,
  where both methods are constructed from the same
  stopping rule~$n$ and output function~$\phi$.
  (We write~$Q_N$ instead of~$Q$ to make clear
  that the method is based on the previously described optimal information
  \checked{with function evaluations at nodes $\vecx_k \in G_{k,N}$}
  for the~$\mu_N^{(i)}$-setting
  while \checked{the stopping rule} $n(u,y_1,y_2,\ldots)$
  and \checked{the output function} $\phi$ remain unchanged.)
  If $n(\omega,f)$ is absolutely bounded by~$M \leq N$,
  it suffices to consider the probability of different outcomes
  of~$M$ function evaluations.
  For an i.i.d.\ sample we have
  \begin{equation} \label{eq:f(X_j)=z_l,iid}
    \P\{f(\vecX_1) = z_{1},\ldots,f(\vecX_M) = z_{M}\}
      = \prod_{j=1}^M p_{z_j}^{(i)}
    \qquad \text{for $f \in \Finput^{(i)}$
     and $z_1,\ldots,z_M\in D$,
    }
  \end{equation}
  and in the $\mu_N^{(i)}$-average setting we have
  \begin{align}
    \prod_{j=1}^M \frac{N \cdot p_{z_j}^{(i)} - M}{N}
      &\leq \mu_N^{(i)}\{f(\vecx_1) = z_{1},\ldots,f(\vecx_M) = z_{M}\} 
      \nonumber\\
      &= \prod_{j=1}^M \frac{N \cdot p_{z_j}^{(i)}
        - \#\{\ell=1,\ldots,j-1 \colon z_{\ell} = z_j\}
              }{N-(j-1)}
      \label{eq:f(x_j)=z_l,Q}\\
      &\leq \prod_{j=1}^M \frac{N \cdot p_{z_j}^{(i)}}{N-M} \,,
      \nonumber
  \end{align}
  where the lower bound holds if~$M \leq N \cdot p_{z}^{(i)}$
  for all~$z\in D$ and $i=1,2$.
  This is true for large~$N$,
  and in the limit of $N \to \infty$ while $M$ is fixed,
  the probability of~\eqref{eq:f(x_j)=z_l,Q} 
  approaches~\eqref{eq:f(X_j)=z_l,iid}.
  Determining the uncertainty probability is now a discrete problem,
  namely, by the independence of~$U$ from the i.i.d.-sample we have
  \begin{multline*}
  \P\{A(f(\vecX)) \not= a_i\} = \\
    \sum_{(z_1,\ldots,z_M)\in D^M}
      \P\{\phi(U,z_{1},\ldots,z_{M},0,\ldots) \not= a_i\}
      \cdot \P\{f(\vecX_1) = z_{1},\ldots,f(\vecX_M) = z_{M}\} 
  \end{multline*}
  for $f \in \Finput^{(i)}$.  
  On the other hand, within the~$\mu_N^{(i)}$-average setting we have
  \begin{multline*}
    (\P\otimes\mu_N^{(i)})\{(\omega,f) \colon Q_N(\omega,f) \not= a_i\} =  \\
      \sum_{(z_1,\ldots,z_M)\in D^M}
        \P\{\phi(U,z_{1},\ldots,z_{M},0,\ldots) \not= a_i\}
        \cdot \mu_N^{(i)}\{f(\vecx_1) = z_{1},\ldots,f(\vecx_M) = z_{M}\} \,.
  \end{multline*}
  Here, we wrote \mbox{$\phi(U,z_1,\ldots,z_M,0,\ldots)$},
    which %of course
    in fact
    only depend on \mbox{$z_1,\ldots,z_{n(U,\vecz)}$}
    where \mbox{$n(U,\vecz) := n(U,z_1,\ldots,z_M,0,\ldots)$}.
  Hence, for any $\eta > 0$ there is an \mbox{$N_0 \in \D_{\vecp}$}
  such that for all \mbox{$N \geq N_0 \geq M$},
  \checked{for all stopping rules 
  $n(u,y_1,y_2,\ldots) \leq M$,
  and for all} output functions~$\phi$,
  the corresponding methods~$Q_N$ and $A$ satisfy for $f \in \Finput^{(i)}$
  that
  \begin{equation} \label{eq:etarisk}
    \P\{A(f(\vecX)) \not= a_i\}
      \,\leq\, \eta
          + (\P\otimes\mu_N^{(i)})\{(\omega,f) \colon Q_N(\omega,f) \not= a_i\}
%    \qquad \text{for $f \in \Finput^{(i)}$.}
            \,.
  \end{equation}
  Similarly, for any~$\nu > 0$
  and sufficiently large~$N$ we obtain
  \begin{align*}
    \bar{n}(A,&f(\vecX))
      = \sum_{(z_1,\ldots,z_M)\in D^M}
          \expect n(U,z_{1},\ldots,z_{M},0,\ldots)
             \cdot %\P\{f(\vecX_1) = z_{1},\ldots,f(\vecX_M) = z_{M}\}
             %% Robert: Aus Platzgründen
                   \P\{f(\vecX_i) = z_i,\,i=1,\ldots,M\} \\
      &\,\leq\,
        \nu
        + \sum_{(z_1,\ldots,z_M)\in D^M}
            \expect n(U,z_{1},\ldots,z_{M},0,\ldots)
             \cdot %\mu_N^{(i)}\{f(\vecx_1) = z_{1},\ldots,f(\vecx_M) = z_{M}\}
                   \mu_N^{(i)}\{f(\vecx_i) = z_i,\,i=1,\ldots,M\} \\
      &\,=\,
        \nu + \int_{\Finput_N^{(i)}} \bar{n}(Q_N,f) \dint \mu_N^{(i)}(f)
      \stackrel{\eqref{eq:n_i,mu}}{\,\leq\,} \nu + \bar{n}_i \,.
  \end{align*}
  Hence, $A$ does not satisfy the constraint~\eqref{eq:n_i-constraint},
  but~$\nu\to 0$ as $N \to \infty$.
  The violation of the constraint
  can be fixed by defining a modified method~$A'$ that,
  independently from all random variables used in~$A$,
  decides with probability
  \begin{equation} \label{eq:zetarisk}
    \zeta := \max_{i=1,2} \frac{\nu}{\nu + \bar{n}_i}
  \end{equation}
  that no information at all shall be collected
  and simply returns an arbitrary value,
  whereas with probability~$1-\zeta$ the original algorithm $A$ is executed.
  That way the uncertainty of the method~$A'$ only rises by~$\zeta$,
  which vanishes in the limit~$\nu \to 0$.
  
  To summarize, since i.i.d.-based algorithms are special general randomized algorithms,
  for sufficiently large~$N$ we have
  \begin{multline*}
    \inf_A \max_{i=1,2}
      \sup_{f \in \Finput^{(i)}}
       \P\{A(f(\vecX)) \not= a_i\}
      \stackrel{\eqref{eq:murisk}}{\geq}
        \inf_Q \max_{i=1,2} \,
          (\P\otimes\mu_N^{(i)})\{(\omega,f) \colon Q(\omega,f) \not= a_i\} \\
    \stackrel{\eqref{eq:Mrisk},\eqref{eq:etarisk},\eqref{eq:zetarisk}}{\geq}
        \inf_A \max_{i=1,2}
            \sup_{f \in \Finput^{(i)}}
          \P\{A(f(\vecX)) \not= a_i\}
          \checked{ - \frac{\bar{n}_i}{M} - \eta - \zeta}
          \,,
  \end{multline*}
  where the infimum runs over all methods
  satisfying the cardinality constraint~\eqref{eq:n_i-constraint}.
  Fixing~$M$ and letting $N\to\infty$, $\eta$ and $\zeta$ vanish.
  Finally, the limit $M\to\infty$ proves the assertion.
\end{proof}

An immediate consequence of the previous
proposition is that the different types of 
lower bounds for i.i.d.-based algorithms on
$\Yinput_{p,q,K}$ carry over to 
lower bounds for general randomized algorithms on $\Finput_{p,q,K}(\pi)$.

\begin{cor} \label{cor:LBsINT}
Let $1\leq p<q\leq \infty$, $K>1$ and $\tau>0$.
Then, for the integration problem \eqref{eq:INT},
\begin{enumerate}
 \item on~$\Finput = L_q(\pi)$ we obtain unsolvability, 
   see corresponding Theorem~\ref{thm:Lq--noway};
\end{enumerate}
and on cone-shaped input classes $\Finput_{p,q,K}(\pi)$ we obtain
\begin{enumerate} \setcounter{enumi}{1}
 \item impossibility for any general randomized algorithm $Q$
   to guarantee a finite worst case expected error, that is, 
   \begin{equation*}
     \sup_{f\in \Finput_{p,q,K}(\pi)} \expect | Q(f)-\Int(f) |
       \,=\, \infty,
   \end{equation*}
   see corresponding Theorem~\ref{thm:E(err)<eps--noway};
 \item the following lower bound for the fixed cost
   of general randomized algorithms,
   \begin{equation*}
     \bar{n}^{\fix}(\eps,\delta,\Finput_{p,q,K}(\pi))
       \,\geq\, \frac{1}{2\log 2} \, K^{pq/(q-p)} \, \log \delta^{-1} 
   \end{equation*}
   for any $\eps > 0$ and $\delta \in (0,1)$,
   see corresponding Theorem~\ref{thm:fixedcost};
 \item 
    the following lower bound for the worst case complexity
    for bounded norm,
    \begin{equation*}
      \bar{n}^{\wor}(\eps,\delta,\Finput_{p,q,K}(\pi)\cap \tau\Barrel_{\wt q})
        \,\geq\,
          c_{q,K} \, \left(\frac{\tau}{\eps}\right)^{1+1/(\wt{q}-1)}
            \, \log\left(\fracts{4}{3}\delta\right)^{-1}
   \end{equation*}
   for any $0<\eps< \frac{1}{6}\,(1-1/K)\,\tau$ and $0 < \delta < 1/4$,
   with $\wt{q}:=\min\{q,2\}$,
   and a constant $c_{q,K}>0$ depending only on $q$ and $K$,
   see corresponding Theorems~\ref{thm:LB-YcapB2} and \ref{thm:LB-YcapBq}.
\end{enumerate}
\end{cor}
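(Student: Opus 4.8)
The plan is to reduce the whole statement to Proposition~\ref{prop:iid=general}, exploiting that the central seminorms $\|\,f-\Int(f)\|_r$, and hence membership in $\Finput_{p,q,K}(\pi)$, depend only on the push-forward distribution $\pi\circ f^{-1}$. Consequently each discrete random variable used in the i.i.d.-based lower bounds of Sections~\ref{sec:Y-LBs0} and~\ref{sec:Y-worLBs} can be realized, by means of the standing partition assumption, as a symmetric collection of step functions $\Finput^{(i)}=\bigcup_{N\in\D_{\vecp}}\Finput_N^{(i)}$ with all $f\in\Finput^{(i)}$ sharing the integral value $a_i=\sum_{z\in D}p_z^{(i)}z$ and lying inside $\Finput_{p,q,K}(\pi)$ (resp.\ inside $L_q(\pi)$). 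Whenever the i.i.d.-based argument rested on a two-hypothesis test between $Y^{(1)},Y^{(2)}$ with $|a_1-a_2|>2\eps$, any $(\eps,\delta)$-approximating general randomized algorithm $Q$ for~\eqref{eq:INT} induces, by thresholding $Q(f)$ at $\tfrac12(a_1+a_2)$, a test distinguishing $\Finput^{(1)}$ from $\Finput^{(2)}$ with confidence $1-\delta$ and the same expected cardinality; Proposition~\ref{prop:iid=general} then shows this cannot be achieved at smaller cost than by an i.i.d.-based method, so the i.i.d.-based lower bound is inherited by $Q$.

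For item~(1) I would take $\Finput^{(1)}=\{0\}$ and let $\Finput_N^{(2)}$ consist of the functions equal to some fixed $c'$ on one $G_{j,N}$ and to $0$ elsewhere; letting $N\to\infty$, so the ``success probability'' $1/N$ tends to $0$, and $c'\to\infty$, while following the argument of Theorem~\ref{thm:Lq--noway}, yields unsolvability on $L_q(\pi)$. For item~(2) I would take $\Finput^{(1)}=\{0\}$ and build $\Finput^{(2)}$ from the distribution $\P\{\cdot=0\}=\P\{\cdot=2a\}=\tfrac12$, which lies in every $\Finput_{p,q,K}(\pi)$; since Theorem~\ref{thm:E(err)<eps--noway} needs no confidence constraint but only almost sure finiteness of $n(\,\cdot\,,0)$, running its argument and letting $a\to\infty$ gives $\sup_f\expect|Q(f)-\Int(f)|=\infty$. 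For item~(4) I would transcribe the Bernoulli construction of Theorem~\ref{thm:LB-YcapB2} (for $q\ge2$) and the three-point construction of Theorem~\ref{thm:LB-YcapBq} (for $1<q\le2$) into antithetic collections $\Finput^{(1)},\Finput^{(2)}\subset\Finput_{p,q,K}(\pi)\cap\tau\Barrel_{\wt q}$, approximate the possibly irrational success probabilities from above and below by rationals so that a common denominator exists in $\D_{\vecp}$, pass to the limit, and then invoke Proposition~\ref{prop:iid=general} together with Wald's inequality (Proposition~\ref{prop:Wald-probrat}) exactly as in those proofs; with $\wt q=\min\{q,2\}$ the exponent $1+1/(\wt q-1)$ and a prefactor $c_{q,K}>0$ come out as claimed.

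Item~(3) is the one that does not follow from a verbatim application of Proposition~\ref{prop:iid=general}, and I expect it to be the main obstacle, because the bound $\bar{n}(Q,f)\ge\frac{1}{2\log2}K^{pq/(q-p)}\log\delta^{-1}$ has to hold for an \emph{arbitrary} $f\in\Finput_{p,q,K}(\pi)$, which need not be a step function on any of the partitions. The idea is to re-run the symmetrization at the core of the proof of Proposition~\ref{prop:iid=general} with this $f$ as a fixed background: fix a rational $a$ slightly larger than $K^{-pq/(q-p)}$, let $E$ be a uniformly random union of $aN$ of the sets $G_{j,N}$, and consider the perturbed functions $g_E:=f+c'\ind_E$. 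All of these have the same integral $\Int f+c'a$, and the triangle inequality shows that their centered-norm ratio tends, as $c'\to\infty$, to $\|\ind_E-a\|_q/\|\ind_E-a\|_p$, which is strictly below $K$ by the proof of Lemma~\ref{lem:Bernoulli,K>>1} since $a>K^{-pq/(q-p)}$; hence $g_E\in\Finput_{p,q,K}(\pi)$ for large $c'$. Coupling the runs of $Q$ on $f$ and on $g_E$ through the shared internal randomness, the two runs coincide as long as the query points on input $f$, which are deterministic once the internal randomness is fixed, avoid $E$; averaging over $E$, comparing hypergeometric with binomial sampling probabilities as $N\to\infty$ as in the estimate around~\eqref{eq:f(x_j)=z_l,Q}, and using Jensen's inequality, this avoidance probability is at least $(1-a)^{\bar{n}(Q,f)}-o(1)$. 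Picking a good $E$, choosing $c'$ so that $\Int g_E$ lies far from a value that $Q(f)$ attains with probability close to one, and then arguing as in Theorem~\ref{thm:fixedcost} forces $\delta\ge(1-a)^{\bar{n}(Q,f)}$; letting first $N\to\infty$ and then $a\downarrow K^{-pq/(q-p)}$ yields the stated bound. The remaining points, namely truncating the cardinality at some $M$ and sending $M\to\infty$ as in Proposition~\ref{prop:iid=general}, using the distribution $a=\tfrac12$ instead of $a$ near $K^{-pq/(q-p)}$ when $K$ is close to $1$ (exactly as in Theorem~\ref{thm:fixedcost}), and the rational approximation of probabilities in~(4), are routine.
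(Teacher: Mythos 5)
Your proposal is correct, and for parts (1), (2) and (4) it follows essentially the paper's route: realize the discrete distributions from the i.i.d.-based lower bounds as symmetric step-function classes $\Finput^{(i)}_N$ via the partition assumption, approximate the probabilities by rationals, and invoke Proposition~\ref{prop:iid=general} (with the cardinality constraint relaxed to a tail condition for the unsolvability statements, and with $\bar{n}_1=\bar{n}_2$ together with the thresholding/Wald argument for the worst-case bounds). Where you genuinely diverge is part (3). The paper handles the arbitrary background input $f^*$ by extending Proposition~\ref{prop:iid=general} to countably many perturbation classes $\Finput^{(i)}$ with integrals $ac_0+3i\eps$ (so that the family is fixed before the algorithm-dependent threshold $b$ is known) and by the shift reduction $Q'(f):=Q(f-f^*)+\Int(f^*)$, which transports the problem back to the symmetric step-function setting; the i.i.d.\ bound of Theorem~\ref{thm:fixedcost} then applies, with $\bar{n}_0=\bar{n}(Q,f^*)$ dominating the cost on the other classes. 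You instead re-run the symmetrization inline: a uniformly random union $E$ of $aN$ subdomains, the perturbation $g_E=f+c'\ind_E$ (whose membership in $\Finput_{p,q,K}(\pi)$ for large $c'$ follows exactly as you say from the triangle inequality and Lemma~\ref{lem:Bernoulli,K>>1}), a coupling of the runs of $Q$ on $f$ and $g_E$ through the shared internal randomness, the hypergeometric-versus-binomial comparison, and Jensen's inequality, after which $c'$ may be chosen adaptively to the algorithm. Both arguments are sound; your version is more self-contained and sidesteps the need to justify the countable-family extension and why the shifted (non-step-function) classes $f^*+\Finput^{(i)}$ fit into Proposition~\ref{prop:iid=general}, at the price of re-deriving the averaging machinery that the paper keeps encapsulated in that proposition. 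Your diagnosis that part (3) is the one place where a verbatim application of Proposition~\ref{prop:iid=general} fails, because the fixed-cost bound must hold at every $f$ and not only at step functions, matches exactly the difficulty the paper's proof is designed to overcome.
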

\begin{proof}
  In all cases, the random variables have to be modified
  \checked{before building corresponding function classes},
  now possessing rational weights for the different values
  such that the cone condition within~$\Yinput_{p,q,K}$ is still satisfied.
  Since~$\Q$ is dense in~$\R$, we obtain the same lower bounds.
  Cost bounds at given confidence level (as %given
  to be found in the cited theorems)
  are equivalent to uncertainty bounds at given cost budget
  (as considered in Proposition~\ref{prop:iid=general}),
  namely due to monotonicity and continuity
  of the connection between the two quantities,
% simple
  inversion does the job.
  
  In Theorems~\ref{thm:LB-YcapB2} and~\ref{thm:LB-YcapBq}
  we are interested in the worst case cost,
  so the cost constraint we impose
  when applying Proposition~\ref{prop:iid=general}
  will be with~$\bar{n}_1 = \bar{n}_2$.
  
  For generalizing the unsolvability results Theorems~\ref{thm:Lq--noway}
  and~\ref{thm:E(err)<eps--noway},
  the cardinality constraint~\eqref{eq:n_i-constraint}
  in Proposition~\ref{prop:iid=general}
  must be replaced by a constraint of type
  \begin{equation}
    \P\{\bar{n}(A,f) > M \} \leq  \nu_M
     \qquad \text{for all $f \in \Finput^{(1)} \cup \Finput^{(2)}$,}
  \end{equation}
  with a monotonously decaying sequence~$(\nu_M)_{M \in \N_0}$
  converging to~$0$.
  Consequently, within the proof of Proposition~\ref{prop:iid=general},
  the estimate~\eqref{eq:Mrisk} has to be adjusted accordingly,
  but all following arguments with the limit~$M\to\infty$ will work the same.
  In detail, $\Finput^{(1)}$ only consists of the zero function.
  Since an algorithm that shall distinguish the zero function
  from a class~$\Finput^{(2)}$ consisting of non-zero functions
  can stop when observing a non-zero function value,
  we may assume~$\nu_M := \P\{\bar{n}(A,0) > M\}$,
  so the constraint is actually no restriction.
  
  For generalizing Theorem~\ref{thm:fixedcost}
  we need to extend Proposition~\ref{prop:iid=general}
  to the task of distinguishing finitely many function classes
  $\Finput^{(0)},\ldots,\Finput^{(m)}$
  with integral values at separated distances~$>2\eps$,
  the proof works similar.
  \checked{By taking the limit $m \to \infty$,}
  we may even consider countably many function 
  classes~$\Finput^{(i)}$, $i \in \N_0$.
  Now for the detailed transcription of
  the proof of Theorem~\ref{thm:fixedcost}:
  We take any function~$\checked{f^*} \in \Finput_{p,q,K}$
  %andere Bezeichnung, um Verwirrung zu reduzieren.
  (which corresponds to~$Y \in \Yinput_{p,q,K}$,
  the input for which we eventually derive a cost bound);
  further we consider function classes~$\Finput^{(i)}$ with \mbox{$i \in \N_0$}
  (which correspond to random variables of type~$c_i Z$),
  such that 
  $\checked{f := f^* + \wt{f}} \in \Finput_{p,q,K}$ for~$\checked{\wt{f}} \in \Finput^{(i)}$.
  The set $\Finput^{(0)}$ only contains the zero function,
  for all other indices~$i \in \N$ we \checked{specify} %have
  for $\wt{f} \in \Finput^{(i)}$ that
  \begin{equation*}
    \pi\{\wt{f}(\vecx) = 0\} = 1-a \,,
    \qquad \text{and}\qquad \pi\{\wt{f}(\vecx) = c_0 + 3\,i\eps/a\} = a \,,
  \end{equation*}
  with~$c_0 > 0$ and $a \in \Q$ as in the proof of Theorem~\ref{thm:fixedcost}.
  Consequently,
  \begin{equation*}
    \Int(\wt{f})
      = \begin{cases}
          0 &\quad\text{for $\wt{f} \in \Finput^{(0)}$,}\\
          a c_0 + 3\,i\eps
            &\quad\text{for $\wt{f} \in \Finput^{(i)}$
             with~$i \in \N$.}
        \end{cases}
  \end{equation*}
  The task of determining~$\Int(f) = \Int(f^* + \wt{f})$,
  not knowing the index $i \in \N_0$ for which \mbox{$\wt{f} \in \Finput^{(i)}$},
  can be traced back to determining~$\Int(\wt{f})$,
  since for any general algorithm~$Q$
  \checked{acting on~$\bigcup_{i = 0}^{\infty} \Finput^{(i)}$,}
  by
  \begin{equation*}
    Q'(f) := Q(f - f^*) \checked{+ \Int(f^*)}
  \end{equation*}
  another algorithm
  \checked{acting on $f^* + \bigcup_{i = 0}^{\infty} \Finput^{(i)}$
  and exhibiting the same reliability}
  is defined.
  This also holds for algorithms that use i.i.d.\ samples,
  because in the integration setting the points~$\vecX_i\sim\pi$
  are user-generated and one may use~$f^*(\vecX_i)$ within~$Q'$.
  Since %in this setting
  it suffices to observe one function value
  \mbox{$\wt{f}(\vecx_j) = z_i \not= 0$}
  in order to know that~$\wt{f} \in \Finput^{(i)}$ with~$i \not= 0$,
  the cost \mbox{$\bar{n}_0 = \bar{n}(Q,f^*) = \bar{n}^{\wor}(Q',\Finput^{(0)})$}
  is an upper bound for the worst case cost
  \mbox{$\bar{n}_i = \bar{n}^{\wor}(Q',\Finput^{(i)})$}
  in the other cases.
\end{proof}

% So far we did not mention an unsolvability result
% for the whole class of $q$-integrable functions, $\Finput = L_q(\pi)$,
% similar to Theorem~\ref{thm:Lq--noway}.
% This is because
We can formulate an even stronger unsolvability result
that holds for the small space of smooth functions. 
%E  Den letzten Satz habe ich gekuerzt. 

\begin{thm} \label{thm:Cinf}
  The integration problem~\eqref{eq:INT} for the class of infinitely
  differentiable functions,
  \begin{equation*}
    \Finput = C^{\infty}([0,1]) \,,
  \end{equation*}
  with respect to the Lebesgue measure on~$[0,1]$,
  is not solvable.
\end{thm}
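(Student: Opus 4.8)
The plan is to argue by contradiction, exploiting that a general randomized algorithm reads off only finitely many (random, adaptively chosen) function values, so a sufficiently thin smooth bump placed away from those values is invisible to it while contributing an arbitrarily large amount to the integral. Fix an arbitrary accuracy $\eps > 0$ and an uncertainty level $\delta \in (0,1/2)$, and suppose there were an $(\eps,\delta)$-approximating general randomized algorithm $Q$ on $C^\infty([0,1])$ with almost surely finite cost, as in Definition~\ref{def:alg}. First I would run $Q$ on the zero function $0 \in C^\infty([0,1])$. Since the cost $n(\,\cdot\,,0)$ is $\P$-almost surely finite, for any prescribed $\eta \in (0,1-2\delta)$ there is an $n_0 \in \N$ with $\P\{n(\,\cdot\,,0) > n_0\} \le \eta$; set $E := \{n(\,\cdot\,,0) \le n_0\} \cap \{|Q(0)| \le \eps\}$, so that $\P(E) \ge 1-\delta-\eta$.

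Next I would build the family of thin bumps. Choose a large $N \in \N$ (to be fixed below), let $I_j := \bigl((j-1)/N,\,j/N\bigr)$ for $j = 1,\dots,N$, and fix once and for all a $\varphi \in C^\infty(\R)$ with $\supp\varphi = [0,1]$, $\varphi > 0$ on $(0,1)$, and $\int_0^1\varphi = 1$. Put $g_j(x) := 3\eps N\,\varphi\bigl(Nx-(j-1)\bigr)$; then $g_j \in C^\infty([0,1])$, the set $\{g_j \ne 0\}$ equals $I_j$, and $\Int(g_j) = 3\eps$. The decisive observation is deterministic: fix an outcome $\omega$ with $n(\omega,0) \le n_0$ and let $B(\omega) \subset [0,1]$ be the (at most $n_0$) sampling nodes visited by $Q$ on input $0$; if $B(\omega) \cap I_j = \emptyset$ then $g_j$ vanishes on $B(\omega)$, and consequently the run of $Q$ on input $g_j$ with the same outcome $\omega$ observes the same all-zero values, makes the same stopping decision at the same time, visits the same nodes, and returns $Q(\omega,g_j) = Q(\omega,0)$. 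This is checked by induction on the step index, using that in Definition~\ref{def:alg} the node $\vecX_i$ and the event $\{n \ge i\}$ depend only on the previously observed values $y_1,\dots,y_{i-1}$, which are all $0$ here, and that the $y_i$ for $i$ beyond the stopping time are set to $0$ anyway.

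Then I would conclude by averaging over the location $j$ of the bump. Since the intervals $I_1,\dots,I_N$ are pairwise disjoint, the at most $n_0$ points of $B(\omega)$ lie in at most $n_0$ of them, so for every $\omega \in E$ there are at least $N - n_0$ indices $j$ with $B(\omega) \cap I_j = \emptyset$; for each such $j$ we then have $Q(\omega,g_j) = Q(\omega,0) \in [-\eps,\eps]$ and hence $|Q(\omega,g_j) - \Int(g_j)| = |Q(\omega,g_j) - 3\eps| \ge 2\eps > \eps$. Summing over $j$ and interchanging sum and integral,
\[
  \sum_{j=1}^{N}\P\bigl\{|Q(g_j)-\Int(g_j)| > \eps\bigr\}
    \;\ge\; \int_E \#\{\,j : B(\omega)\cap I_j = \emptyset\,\}\,\diff\P(\omega)
    \;\ge\; (N - n_0)\,(1-\delta-\eta) .
\]
Hence there is an index $j^*$ with $\P\{|Q(g_{j^*})-\Int(g_{j^*})| > \eps\} \ge \tfrac{N-n_0}{N}\,(1-\delta-\eta)$. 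Because $\delta < 1/2$ and $\eta < 1-2\delta$ we have $1-\delta-\eta > \delta$, so choosing $N$ large enough makes this probability exceed $\delta$; as $g_{j^*} \in C^\infty([0,1])$, this contradicts $Q$ being $(\eps,\delta)$-approximating on $C^\infty([0,1])$, and the theorem follows.

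The main obstacle is the combination of adaptivity and randomization: the query points of a general randomized algorithm are themselves random (through the auxiliary variable $U$) and chosen adaptively, so one cannot simply hide the bump away from a fixed set of nodes fixed in advance. The averaging over the $N$ candidate locations circumvents this, since for every single outcome all but at most $n_0$ of the bumps are missed; this is precisely the reduction from a randomized worst case bound to an average case bound used in Proposition~\ref{prop:iid=general} and attributed there to Bakhvalov. The only other point requiring care is the inductive verification that, on the event that the bump is missed, $Q$'s entire trajectory and stopping time agree with those on the zero input, which is immediate from the definition of a stopping rule.
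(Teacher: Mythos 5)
Your proof is correct and follows essentially the same route as the paper's: $N$ disjoint smooth bumps with integral exceeding $2\eps$, the observation that on the zero input the algorithm queries only finitely many nodes and hence cannot distinguish the zero function from most of the bumps, and an averaging (Bakhvalov-type) argument over the bump location to defeat the randomization. The only difference is presentational — the paper delegates the averaging and the coupling of trajectories to the machinery of Proposition~\ref{prop:iid=general} and Theorem~\ref{thm:Lq--noway}, whereas you carry out the same argument self-contained and in full detail.
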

\begin{proof}
  Split $[0,1]$ into~$N$ intervals of equal length,
  \mbox{$G_i := [(i-1)/N, i/N)$}, with \mbox{$i=1,\ldots,N$}.
  It is well known that there exist $C^{\infty}$-functions~$f_i$ on $\R$
  that are supported on the closures of the intervals~$\overline{G_i}$
  and positive on the interior of~$G_i$.
  Scaling does not affect the property
  of belonging to the class $\Finput = C^{\infty}([0,1])$,
  so~$\Int(f_i)$ can be arbitrarily large.
  Similarly to the proof of Proposition~\ref{prop:iid=general},
  we aim to distinguish the zero function from a $\mu^{(2)}_N$-average input
  where~$\mu^{(2)}_N$ is the uniform distribution on the set
  \mbox{$\Finput^{(2)}_N := \{f_1,\ldots,f_N\}$}.
  Analogously to the proof of Theorem~\ref{thm:Lq--noway},
  the $\mu^{(2)}_N$-probability of observing a non-zero function value
  after $n_0$~function evaluations vanishes in the limit $N\to\infty$.
  In the same manner we then obtain
  that for any $\eps > 0$ and $\delta \in (0,1/2)$
  there is no $(\eps,\delta)$-approximating algorithm
  for the whole class~$C^{\infty}([0,1])$.
\end{proof}

\section*{Acknowledgements}

We thank Fred Hickernell, Mark Huber, and Henryk Wo\'zniakowski 
for many valuable discussions over several years 
and we also thank the referees. 
Parts of the work have been done at the ESI in Vienna during the programme
``Tractability of High Dimensional Problems and Discrepancy''
in October 2017.
Daniel Rudolf gratefully acknowledges support of the 
Felix-Bernstein-Institute for Mathematical Statistics in the Biosciences
(Volks\-wagen Foundation) and the Campus laboratory AIMS.

%   \en{Mehr Bedankungen?} 
%E  Wir behalten das im Auge.

\end{document}